\newtheorem{theorem}{Theorem}[section]
\newtheorem{proposition}[theorem]{Proposition}
\newtheorem{lemma}[theorem]{Lemma}
\newtheorem{corollary}[theorem]{Corollary}
\newtheorem{remark}[theorem]{Remark}
\newtheorem{definition}[theorem]{Definition}
\newcommand{\ble}{\begin{lemma}}
\newcommand{\ele}{\end{lemma}}
\newcommand{\be}{\begin{equation*}}
\newcommand{\ee}{\end{equation*}}
\newcommand{\bel}{\begin{equation}}
\newcommand{\eel}{\end{equation}}
\newcommand{\ep}{\varepsilon}
\newcommand{\fr}{\frac }
\newcommand{\lap}{\Delta}
\newcommand{\N}{\mathbb{N}}
\newcommand{\na}{\nabla}
\newcommand{\R}{\mathbb{R}}
\renewcommand{\to}{\rightarrow}
\newcommand{\To}{\longrightarrow}
\newcommand{\xip}{x_{i,p}}
\newcommand{\xjp}{x_{j,p}}
\newcommand{\mip}{\mu_{i,p}}
\newcommand{\upp}{u_p}
\def\sideremark#1{\ifvmode\leavevmode\fi\vadjust{\vbox to0pt{\vss
 \hbox to 0pt{\hskip\hsize\hskip1em
 \vbox{\hsize2.1cm\tiny\raggedright\pretolerance10000
  \noindent #1\hfill}\hss}\vbox to15pt{\vfil}\vss}}}%
\newcommand{\virg}[1]{\textquotedblleft#1\textquotedblright}
\numberwithin{equation}{section}
\begin{document}

\title[]{Asymptotic analysis for the Lane-Emden\\ problem in dimension two}

\author[]{Francesca De Marchis, Isabella Ianni, Filomena Pacella}

\address{Francesca De Marchis, University of Roma {\em Sapienza}, P.le Aldo Moro 5, 00185 Roma, Italy}
\address{Isabella Ianni, Second University of Napoli, V.le Lincoln 5, 81100 Caserta, Italy}
\address{Filomena Pacella, University of Roma {\em Sapienza}, P.le Aldo Moro 5, 00185 Roma, Italy}

\thanks{2010 \textit{Mathematics Subject classification:} 35B05, 35B06, 35J91. }

\thanks{ \textit{Keywords}: semilinear elliptic equations, superlinear elliptic boundary value problems, asymptotic analysis, concentration of solutions.}

\thanks{Research supported by: PRIN $201274$FYK7$\_005$ grant, INDAM - GNAMPA and Sapienza Research Funds: \virg{Avvio alla ricerca 2015} and \virg{Awards Project 2014}}

\maketitle

\section*{Introduction}\label{section:intro}
We consider the Lane-Emden Dirichlet problem
\begin{equation}\label{problem}\left\{\begin{array}{lr}-\Delta u= |u|^{p-1}u\qquad  \mbox{ in }\Omega\\
u=0\qquad\qquad\qquad\mbox{ on }\partial \Omega
\end{array}\right.
\end{equation}
when $p>1$ and $\Omega\subset\R^2$ is a smooth bounded domain.
The aim of the paper is to survey  some recent results on the asymptotic behavior of solutions of \eqref{problem} as the exponent  $p\rightarrow \infty $. 
\\
We will start in Section \ref{SectionClassicalResults} with a summary of some basic and well known facts about the solutions of \eqref{problem}. We will also describe a recent result about the existence, for $p$ large, of a special class of sign-changing solutions of \eqref{problem} in symmetric domains 
 (see \cite{DeMarchisIanniPacellaJDE}) and we will provide, for $p$ large, the exact computation of the Morse index of least energy nodal radial solutions of \eqref{problem} in the ball, as obtained in \cite{DeMarchisIanniPacellaMORSE}.
\\ 
The asymptotic behavior as $p\rightarrow \infty $, will be described in the Sections \ref{SectionGeneralAnalysis}--\ref{Section:GSymmetric}. In Section \ref{SectionGeneralAnalysis} a general ``profile decomposition'' theorem obtained in \cite{DeMarchisIanniPacellaJEMS} and holding both for positive and sign-changing solutions will be presented with a detailed proof together with some additional new results, recently obtained in \cite{DIPpositive}. Finally in Section \ref{Section:GSymmetric} we will describe the precise limit profile of the symmetric nodal solutions found in
\cite{DeMarchisIanniPacellaJDE} and then studied in \cite{DeMarchisIanniPacellaJEMS}. In particular, the result of this section will show that, asymptotically, as $p\rightarrow \infty $, the solutions look like a superposition of two bubbles with different sign corresponding to radial solutions of the regular and singular Liouville problem in $\mathbb{R}^2$.

\subsection*{Acknowledgments} 

This paper originates from a short course given by F. Pacella at a Conference-School held in Hammamet in March 2015 in honor of Abbas Bahri.
She would like to thank all the organizers for the wonderful and warm hospitality.

\

\

\section{Various results for solutions of the Lane-Emden problem} \label{SectionClassicalResults}

\

We consider the Lane-Emden Dirichlet problem
\begin{equation}\label{problem1}
\left\{\begin{array}{lr}-\Delta u= |u|^{p-1}u\qquad  \mbox{ in }\Omega\\
u=0\qquad\qquad\qquad\mbox{ on }\partial \Omega
\end{array}\right.
\end{equation}
where $p>1$ and $\Omega\subset\R^2$ is a smooth bounded domain.
\\
Since in $2-$dimension any exponent $p>1$ is subcritical (with respect to the Sobolev embedding) it is well known, by standard variational methods, that \eqref{problem1} has at least one positive solution. Moreover, exploiting the oddness of the nonlinearity  $f(u) = |u|^{p-1}u $  and using topological tools it can be proved that \eqref{problem1} admits infinitely many solutions.
\\
It was first proved in \cite{CastroCossioNeuberger}, and later in \cite{BartschWeth} for more general nonlinearities, that there exists at least one solution which changes sign, so it makes sense to study the properties of both positive and sign-changing solutions. The last ones will be often referred as nodal solutions. Among these solutions one can select those which have the least energy, therefore named ``least energy'' (or ``least energy nodal'') solutions. More precisely, considering the energy functional:
\[
E_p(u)=\frac12\int_{\Omega}|\nabla u|^2\,dx-\frac1{p+1}\int_{\Omega}|u|^{p+1}\,dx,\qquad u\in H^1_0(\Omega)
\]
and the Nehari manifold
\[
\mathcal N=\{u\in H^1_0(\Omega)\,:\,\langle E'_p(u),u\rangle=0\}
\]
or the nodal Nehari set
\[
\mathcal N^\pm=\{u\in H^1_0(\Omega)\,:\,\langle E'_p(u),u^\pm\rangle=0\},
\]
where $u^\pm$ are the positive and negative part of $u$, it is possible to prove that the   $ \inf_{\mathcal N} E_p $ (resp. $\inf_{\mathcal N^\pm} E_p$) is achieved. The corresponding minimizers are the least energy positive (resp. nodal) solutions (see \cite{Willem_Book}, \cite{BartschWeth}). Note that any minimizer on $\mathcal N$ cannot change sign and we will assume that it is positive (rather than negative).
\\
Let us observe that $\mathcal N$ is a codimension one manifold in $H^1_0(\Omega)$ while $\mathcal N^\pm$ is a $C^1$-manifold of codimension $2$ in $H^1_0(\Omega)\cap H^2(\Omega)$ (but not in $H^1_0(\Omega)$, see \cite{BartschWeth}).
\\
For the least energy solutions several qualitative properties can be obtained.
\\
We start by considering the case of positive solutions.
\\
Let us first define the Morse index of a solution of \eqref{problem1}.
\begin{definition}\label{def:Morseindex}
The Morse index $m(u)$ of a solution $u$ of \eqref{problem1} is the maximal dimension of a subspace of $C^1_0(\Omega)$ on which the quadratic form
\[
Q(\varphi)=\int_{\Omega}|\nabla\varphi|^2\,dx-p\int_{\Omega}|u|^{p-1}\varphi^2\,dx
\]
is negative definite.
\end{definition}
In the case when $\Omega$ is a bounded domain, $m(u)$ can be equivalently defined as the number of the negative Dirichlet eigenvalue of the linearized operator at $u$:
\[
L_u=-\Delta-p|u|^{p-1}
\]
in the domain $\Omega$.
\\
It is easy to see, just multiplying the equation by $u$ and integrating, that $Q(u) < 0$, so that there is at least one negative direction for $Q(u)$, i.e. $m(u) \geq 1$. This holds for any solution of \eqref{problem1}, either positive or sign-changing. For the least energy solution $u$, since it minimizes the energy on a codimension one manifold, one could guess that $m(u) = 1$. This is what was indeed proved in  \cite{Solimini} (see also \cite{Willem_Book}), for more general nonlinearities.
Another important property of a solution, both for theoretical reasons and for applications, is its symmetry in symmetric domains.
\\
For positive solutions $u$ of \eqref{problem1}, as a consequence of the famous result by Gidas, Ni and Nirenberg \cite{GNN} it holds that if $\Omega$ is symmetric and convex with respect to a line, then $u$ is invariant by reflection with respect to that line. In particular a positive solution of \eqref{problem1} in a ball is radial and strictly radially decreasing.
\\
This result allows to prove that if $\Omega$ is a ball there exists only one positive solution of \eqref{problem1} (this holds also in higher dimension, when $p$ is a subcritical exponent) ( \cite{GNN}, \cite{Srikanth}, \cite{AdiYadava}, \cite{DGP}).
\\
The question of the uniqueness of the positive solution in more general bounded domains is a very difficult one, still open. It has been conjectured (\cite{GNN}) that it should hold in convex domains (also in higher dimension) but, so far, it has only been proved in the case of planar domains symmetric and convex with respect to two orthogonal lines passing through the origin (\cite{DGP}, \cite{PacellaMilan}). If one restricts the question to the least energy solutions (or more generally to solutions of Morse index one) then the uniqueness, in convex planar domains, has been proved in \cite{Lin}. On the other side it is easy to see that there are nonconvex domains for which multiple positive solutions exist; examples of such domains are annular domains or dumbbell domains (\cite{Dancer}, \cite{PacellaMilan}).
\\
More properties of positive solutions and, actually, a good description of their profile, can be obtained, for large exponents $p$, by the asymptotic analysis of the solutions of \eqref{problem1}, as $p\rightarrow \infty$.
\\
This study started in \cite{RenWeiTAMS1994} and \cite{RenWeiPAMS1996} where the authors considered families $(u_p)$ of least energy (hence positive) solutions and, for some domains, proved concentration at a single point, as well as asymptotic estimates, as $p\rightarrow \infty$. Later, inspired by the paper \cite{AdiStruwe}, Adimurthi and Grossi  in \cite{AdiGrossi} identified a ``limit problem'' by showing that suitable rescalings of $u_p$  converge, in $C^2_{loc}(\mathbb{R}^2)$ to a regular solution $U$ of the Liouville problem
\begin{equation}\label{LiouvilleEquationINTRO}
\left\{
\begin{array}{lr}
-\Delta U=e^U\quad\mbox{ in }\R^2\\
\int_{\R^2}e^Udx= 8\pi.
\end{array}
\right.
\end{equation}
They also considered general bounded domains and showed that $\|u_p\|_\infty$ converges to $\sqrt{e}$ as $p\rightarrow \infty$, as it had been previously conjectured.
\\
So the asymptotic profile of the least energy solutions is clear, as well as their energy.
\\
Concerning general positive solutions, a first asymptotic analysis (actually holding for general families of solutions,  both positive and sign-changing)   under the following energy condition:
\begin{equation}\label{energylimitINTRO}
p\int_{\Omega}|\nabla u_p|^2\,dx\leq C
\end{equation}
for some positive constant $C \geq 8 \pi e$ and independent of $p$, was carried out  in \cite{DeMarchisIanniPacellaJEMS}. Then recently in \cite{DIPpositive}, starting  from this,  a complete description of the asymptotic profile of $u_p$ has been obtained (see Section \ref{SectionGeneralAnalysis})  showing that $(u_p)$ concentrates at a finite number of distinct points in $\Omega$, having the limit profile of the solution  $U$ of \eqref{LiouvilleEquationINTRO} when  a suitable rescaling around each of the concentration points is made. Positive solutions with this profile have been found in \cite{EspositoMussoPistoiaPos}.

\

Now let us analyze the case of sign-changing solutions of \eqref{problem1}. 
\\
Since any such solution $u$ has at least two nodal regions (i.e. connected components of the set where $u$ does not vanish), multiplying the equation by $u$ and integrating on each nodal domain, we get that the Morse index $m(u)$ is at least two. For the least energy nodal solution, since it minimizes the energy functional $E_p$ on $\mathcal N^\pm$, it is proved in \cite{BartschWeth} that its Morse index is exactly two.
\\ Concerning symmetry properties of sign-changing solutions, a general result as the one of Gidas, Ni and Nirenberg for positive solutions cannot hold. This is easily understood just thinking of the Dirichlet eigenfunctions of the Laplacian in a ball.
\\
Nevertheless, by using maximum principles, properties of the linearized operator and bounds on the Morse index, partial symmetry results can be obtained also for nodal solutions. This direction of research started in \cite{PacellaJFA} and continued in \cite{PacellaWeth} and \cite{GladialiPacellaWeth}. In particular in these papers, semilinear elliptic equations with nonlinear terms $f(u)$ either convex or with  a convex derivative were studied in rotationally symmetric domains, showing the foliated Schwarz symmetry of solutions (of any sign) having Morse index $m(u) \leq N$, where $N$ is the dimension of the domain. We recall the definition of foliated Schwarz symmetry:
\begin{definition}\label{def:Schwarzsymmetry}
Let $B\subseteq\mathbb{R^N} , N\geq 2$, be a ball or an annulus. A continuous function $v$ in $B$ is said to be foliated Schwarz Symmetric if there exists a unit vector $p \in \mathbb{R}^N$ such that $v(x)$ only depends on $|x|$ and $\vartheta = \arccos (\frac{x}{|x|} \cdot p)$ and is nonincreasing in $\vartheta$.
\end{definition}
In other words a foliated Schwarz symmetric function is axially symmetric and monotone with respect to the angular coordinate.
\\
In particular, in dimension two, the results of \cite{PacellaWeth} allow to claim that, in a ball or in an annulus, any solution $u$ of \eqref{problem1} with Morse index $m(u) \leq 2$ is foliated Schwarz symmetric. Thus, in such domains, the least energy nodal solutions are foliated Schwarz symmetric. 
\\ 
Since radial functions are, obviously, foliated Schwarz symmetric, one may ask whether the least energy nodal solutions are radial or not. The answer to this question was provided by \cite{AftalionPacella} where it was proved that any sign-changing solution $u$ of a semilinear elliptic equation with a general autonomous nonlinearity $f(u)$ in a ball or an annulus must have Morse index $m(u) \geq N + 2$ (again $N$ denotes the dimension of the domain). 
\\
An immediate corollary of this theorem is that, since a least energy nodal solution of \eqref{problem1} has Morse index two, it cannot be radial.
\\
Another interesting consequence of the result of \cite{AftalionPacella}  is that the nodal set of  a least energy nodal solution of \eqref{problem1} in a ball or an annulus must intersect the boundary of $\Omega$. We recall that the nodal set   $ N(u)$ of a function $u$ defined in the domain $\Omega$ is:
\[
N(u)=\overline{\{x\in\Omega\,:\,u(x)=0\}}.
\]
To understand the property of the nodal line is important while studying sign-changing functions. It is an old question related to the study of the nodal eigenfunctions of the Laplacian, in particular of the second eigenfunction. In \cite{Melas} it has been proved that in convex planar domains the nodal set of a second eigenfunction touches the boundary, but the question is still open in higher dimension, except for the case of some symmetric domains ( \cite{LinCMP}, \cite{Damascelli}).
\\
Coming back to nodal solutions of \eqref{problem1} we observe that if $\Omega$ is a ball or an annulus, it is easy to see that there exist both nodal solutions with an interior nodal line and solutions whose nodal line intersects the boundary. Examples of solutions of the first type are the radial ones while of the second type are those which are antisymmetric with respect to a line passing through the center. It is natural to ask whether both kind of solutions exist in more general domains. While it is not difficult to provide examples of symmetric domains where there are nodal solutions whose nodal line intersects the boundary (rectangles, regular polygons etc.) it is not obvious at all that solutions with an interior nodal line exist. In the paper \cite{DeMarchisIanniPacellaJDE} we have succeeded in proving the existence of this type of solutions in some symmetric planar domains, for large exponents $p$. The precise statement is the following:
\begin{theorem}\label{thm:existenceINTRO}
Assume that $\Omega$ is simply connected,  invariant under the action of a finite group $G$ of orthogonal transformations of $\mathbb{R}^2$. If $ |G |\geq 4$ ($|G|$ is the order of the group)  then, for $p$ sufficiently large \eqref{problem1} admits a sign-changing $G$-symmetric solution $u_p$, with two nodal domains, whose nodal line neither touches $\partial \Omega$, nor passes through the origin. Moreover
\[
p\int_{\Omega}|\nabla\upp|^2 dx\leq\alpha\,8\pi e\quad \mbox{for some  $\alpha<5$ and $p$ large.}
\]
\end{theorem}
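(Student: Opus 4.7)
The plan is to construct $u_p$ as a minimizer on a $G$-symmetric nodal Nehari set, then exploit the resulting energy upper bound together with the $G$-action to locate the nodal line. I would work in the closed subspace
\begin{equation*}
H^{1,G}_0(\Omega) = \{ v \in H^1_0(\Omega) : v(g x) = v(x) \text{ for every } g \in G \}
\end{equation*}
and introduce the $G$-symmetric nodal Nehari set
\begin{equation*}
\mathcal{N}^{\pm}_G = \{ u \in H^{1,G}_0(\Omega) : u^{+} \not\equiv 0,\ u^{-}\not\equiv 0,\ \langle E_p'(u), u^{+} \rangle = \langle E_p'(u), u^{-} \rangle = 0 \}.
\end{equation*}
Since any exponent $p>1$ is Sobolev-subcritical in dimension two, $H^1_0(\Omega)\hookrightarrow L^{p+1}(\Omega)$ compactly and $E_p$ is coercive and bounded below on $\mathcal{N}^{\pm}_G$. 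A standard Nehari-type argument, as in \cite{BartschWeth}, produces a minimizer $u_p \in \mathcal{N}^{\pm}_G$ having exactly two nodal domains; by the principle of symmetric criticality $u_p$ is a critical point of $E_p$ on the full space $H^1_0(\Omega)$, and elliptic regularity upgrades it to a classical $G$-symmetric sign-changing solution of \eqref{problem1}.

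For the quantitative energy bound with $\alpha<5$ I would build an explicit $G$-symmetric test function $\Phi_p = t_p^+ V_p^+ - t_p^- V_p^-$ of competing signs, where $V_p^+$ is a truncated, rescaled positive Liouville bubble centered at the origin and $V_p^-$ is a $G$-invariant function supported in a $G$-invariant region strictly contained in $\Omega$ and away from the origin (for instance a radially symmetric annular bump, or a superposition of small bubbles placed on a non-origin $G$-orbit). The scalars $t_p^{\pm}>0$ are chosen so that $\Phi_p\in\mathcal{N}^{\pm}_G$, and using the asymptotic expansions of \cite{AdiStruwe,AdiGrossi} for the regular Liouville bubble one verifies
\begin{equation*}
\lim_{p\to\infty} p \int_{\Omega} |\nabla \Phi_p|^2 \, dx = C^{\ast} < 5 \cdot 8 \pi e.
\end{equation*}
By minimality of $u_p$ on $\mathcal{N}^{\pm}_G$ together with the Nehari identity, this upper bound is inherited by $p\int_\Omega|\nabla u_p|^2\, dx$ for all $p$ large.

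Finally, since $u_p$ has exactly two nodal domains and $G$ permutes them while preserving the sign of $u_p$, each nodal domain must be $G$-invariant. I would then argue by contradiction: suppose $N(u_p)$ met $\partial\Omega$ at a point $x_0$; by $G$-symmetry the orbit $G\cdot x_0\subset \partial\Omega$ has at least two elements, and combined with the simple connectedness of $\Omega$ this forces either more than two nodal domains (an immediate contradiction) or a configuration in which either the positive or the negative part splits into several $G$-congruent components. Applying the general profile decomposition of \cite{DeMarchisIanniPacellaJEMS}, each such component develops its own asymptotic bubble of energy $\sim 8\pi e$, so that the number of concentration points is at least five, yielding $p\int_\Omega|\nabla u_p|^2\, dx \geq 5\cdot 8\pi e + o(1)$ and contradicting the previous step. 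An analogous analysis at the fixed point $0$, using that its $G$-stabilizer is the whole $G$ with $|G|\geq 4$, shows that $0 \in N(u_p)$ would produce at least four alternating-sign sectors around the origin and hence again five or more concentration points, ruled out in the same way. The main obstacle is the sharp quantitative computation in the second step: one must design test functions that are simultaneously $G$-invariant, have disjoint positive and negative supports strictly inside $\Omega$, and whose Nehari-projected energies converge to a constant \emph{strictly} below $5\cdot 8\pi e$, since only with this sharp threshold do the quantized energy levels of \cite{DeMarchisIanniPacellaJEMS} leave enough room to exclude configurations with five or more concentration points.
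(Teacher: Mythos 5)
Your overall skeleton --- minimization on a $G$-symmetric nodal Nehari set, symmetric criticality, an explicit competitor giving the quantitative energy bound, then energy/asymptotic arguments to localize the nodal line --- is the right one and is essentially the strategy of \cite{DeMarchisIanniPacellaJDE} (the present survey only quotes Theorem \ref{thm:existenceINTRO}; the proof is in that reference). However, two of your key steps do not work as proposed.

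First, the test function has the wrong geometry. The solutions of Theorem \ref{thm:existenceINTRO} are bubble \emph{towers}: as Theorem \ref{TeoremaPrincipaleCasoSimmetrico} shows, both $x_p^+$ and $x_p^-$ converge to the origin and the nodal line shrinks to the origin. Accordingly the competitor must concentrate \emph{both} signs at $O$ at two different scales --- in practice one takes the least-energy nodal \emph{radial} solution of a small ball $B_\rho(O)\subset\Omega$ extended by zero, whose limit energy is the constant $C>16\pi e$ of Theorem \ref{thm:GGP2} and is estimated (via \cite{GrossiGrumiauPacella2}) to be strictly below $40\pi e$. Your ansatz, a positive bubble at $O$ plus a $G$-invariant negative part supported away from $O$, cannot give $\alpha<5$: if the negative part is a fixed non-concentrating annular profile $v$ with $\|v\|_\infty=M$, its Nehari projection satisfies $t_p\to 1/M$, hence $p\,t_p^2\|\nabla v\|_2^2\to+\infty$; if instead it concentrates on a non-trivial orbit, then for cyclic $G$ (allowed by $|G|\ge 4$ and exactly the case needed in Section \ref{Section:GSymmetric}) the orbit has $|G|\ge 4$ points, each projected bubble costs $8\pi e+o_p(1)$, and with the positive bubble the total is at least $5\cdot 8\pi e+o_p(1)$ --- precisely the threshold you must stay strictly below.

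Second, the localization of the nodal line is unsubstantiated. You pass from ``$N(u_p)$ touches $\partial\Omega$ (or contains $O$)'' to ``there are at least five concentration points'' with no argument; the position of the zero set does not control the number of concentration points (a boundary-touching nodal line is compatible with, say, one positive and two negative bubbles). Moreover the energy quantum you assign per point is not available here: for sign-changing solutions the decomposition of Lemma \ref{lemma:BoundEnergiaBassino} only yields $8\pi\alpha_i^2$ with $\alpha_i\ge 1$, i.e.\ $8\pi$ per bubble and not $8\pi e$; the improvement $\alpha_i\ge\sqrt e$ is established only for positive solutions (Theorem \ref{teo:Positive}-$(v)$), and Remark \ref{rem:buonNormaInfinitoAbbassaSimmetria} states explicitly that its absence is what forces the stronger hypothesis $|G|\ge 4e$ in Theorem \ref{TeoremaPrincipaleCasoSimmetrico}. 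With only $8\pi$ per bubble, even many concentration points remain far below $40\pi e$. A related gap: for a $G$-symmetric minimizer one cannot conclude ``exactly two nodal domains'' by the usual Bartsch--Weth truncation, since discarding a single nodal domain destroys $G$-invariance; one must instead combine the upper energy bound with a Ren--Wei type lower bound $8\pi e+o_p(1)$ valid on \emph{each nodal domain} of a solution, and even this by itself only caps the number of nodal domains at four.
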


\

Let us now come back to the question of the Morse index of nodal solutions of \eqref{problem1}. As recalled before, the result of \cite{AftalionPacella} allows to give an estimate from below in the radial case:
\[
m(u) \geq 4
\]
for any radial sign-changing solution $u$ of \eqref{problem1} in a ball or an annulus. In the recent paper \cite{DeMarchisIanniPacellaMORSE} we have been able to compute exactly the Morse index for these solutions when the exponent $p$ is large and $u$ has the least energy among the radial nodal solutions. The result is the following:

\begin{theorem}\label{thm:Morse}
Let $u_p$ be the least energy sign-changing radial solution of \eqref{problem1}. Then
\[
m(u_p) \ =   12
\]
for $p$ sufficiently large.
\end{theorem}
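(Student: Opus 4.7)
The plan is to compute $m(u_p)$ by decomposing the linearized operator $L_p\phi=-\Delta\phi-p|u_p|^{p-1}\phi$ (with Dirichlet boundary conditions on the ball $B$) along the rotational symmetry of $u_p$. Since $u_p$ is radial, $L_p$ commutes with planar rotations, so one may seek eigenfunctions of the form $\psi_k(r)\cos(k\theta)$ and $\psi_k(r)\sin(k\theta)$, $k\geq 0$. For each such $k$ the radial profile $\psi_k$ solves the Sturm-Liouville eigenvalue problem
\[
L_{p,k}\psi := -\psi''-\frac{1}{r}\psi' + \frac{k^2}{r^2}\psi - p|u_p|^{p-1}\psi = \lambda\psi
\]
with Dirichlet conditions at $r=R$ and the appropriate regularity at $r=0$. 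Letting $N_k(p)$ denote the number of strictly negative eigenvalues of $L_{p,k}$, one has
\[
m(u_p) \ = \ N_0(p) + 2\sum_{k\geq 1} N_k(p),
\]
the factor $2$ coming from the two independent harmonics $\cos(k\theta), \sin(k\theta)$.

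\textbf{Mode $k=0$.} Since $u_p$ is the minimizer of $E_p$ on the \emph{radial} nodal Nehari set, a codimension-two constraint inside the space of radial $H^1_0$-functions, the argument of \cite{BartschWeth} adapted to the radial class yields $N_0(p)=2$: the two negative directions are $u_p^+$ and $u_p^-$, and the codimension-two constraint prevents a third. This matches the lower bound $m(u_p)\geq N+2=4$ of \cite{AftalionPacella}, which therefore forces at least one additional negative direction in a mode $k\geq 1$.

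\textbf{Modes $k\geq 1$ via limit Liouville problems.} For $k\geq 1$ I would exploit the asymptotic description of $u_p$ obtained in Section \ref{SectionGeneralAnalysis} and in \cite{DIPpositive,DeMarchisIanniPacellaJEMS}: the inner (positive) bump of $u_p$, after an appropriate rescaling, converges to the standard regular Liouville bubble $U$ solving \eqref{LiouvilleEquationINTRO} on $\R^2$, while the outer (negative) annular bump, after a different rescaling, converges to a singular Liouville profile on $\R^2$. The spectra of the linearizations at each of these limit bubbles are classical and completely explicit: their bounded Jacobi fields live only in modes $k=0$ (dilation) and $k=1$ (translation), and there are no bounded negative eigenfunctions in any mode. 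A blow-up analysis of the eigenvalue problem for $L_{p,k}$ at each concentration scale then shows that every bounded Jacobi field of the two limits produces, for finite $p$, an eigenvalue of $L_{p,k}$ that is strictly negative and tends to $0$ as $p\to\infty$, while any other eigenvalue of $L_{p,k}$ staying negative and bounded away from zero would yield, in the rescaled limit, a nontrivial bounded negative eigenfunction of one of the two Liouville linearizations — a contradiction. Summing the two-bubble Jacobi contributions mode by mode (and checking that the centrifugal potential $k^2/r^2$ quenches all contributions for $k$ large enough) produces the count $m(u_p)=12$.

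\textbf{Main obstacle.} The hard part is the rigorous bubble decomposition at the level of the eigenvalue problem. There are two distinct concentration scales (the regular and the singular Liouville profiles live on different rates), and one must prove that the spectrum of $L_{p,k}$ splits asymptotically into contributions from each limit linearization, with no eigenvalue lost, no spurious negative eigenvalue created by the Dirichlet boundary of $B$, and no interaction term between the two bubbles generating or cancelling negative directions. This requires sharp pointwise asymptotics for $u_p$ and for the potential $p|u_p|^{p-1}$, a careful localization of test eigenfunctions near each bubble, a Courant-Fischer min-max argument in each mode to identify precisely which eigenvalues are negative, and use of the uniform energy bound $p\int_B|\nabla u_p|^2\,dx\leq C$ from Section \ref{SectionGeneralAnalysis} to guarantee compactness of the rescaled eigenfunctions.
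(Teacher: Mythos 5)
Your starting point is the right one and matches the strategy the paper indicates for Theorem \ref{thm:Morse} (the survey does not reproduce the proof, which is in \cite{DeMarchisIanniPacellaMORSE}): decompose the linearized operator along angular Fourier modes so that $m(u_p)=N_0(p)+2\sum_{k\ge1}N_k(p)$, identify $N_0(p)=2$ from the least--energy character of $u_p$ within the radial class, and compute the non-radial modes through the asymptotics of $u_p$, i.e.\ through the regular Liouville bubble and the singular Liouville bubble of Theorem \ref{thm:GGP2}, using the fine estimates of \cite{GrossiGrumiauPacella2}.

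The genuine gap is in the mechanism you propose for the modes $k\ge1$. You assert that the linearizations at the two limit bubbles have \emph{no} bounded negative eigenfunctions in any mode, and that every negative direction of $L_{p,k}$ for $k\ge1$ arises from a Jacobi field of a limit problem being perturbed to a slightly negative eigenvalue. That mechanism cannot produce $12$: the regular bubble supplies Jacobi fields only in modes $k=0$ (dilation) and $k=1$ (translations), while the singular bubble \eqref{espressioneEsplicitaV} is \emph{not} translation invariant (equation \eqref{LiouvilleSingularEquation} carries a Dirac mass at the origin), so it has no mode-$1$ translational Jacobi field at all; your count would stall near $2+2\cdot1=4$. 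Moreover, whether a zero mode of the limit operator perturbs to a negative or a positive eigenvalue at finite $p$ is a delicate second-order question you do not address. The actual source of the ten non-radial negative directions is that the linearization $-\Delta-e^{V}$ at the \emph{singular} bubble does possess a genuine negative bound state in every angular mode $k$ with $k<\alpha/2$: in the variable $s=\alpha\log(r/\beta)$ the mode-$k$ radial operator becomes the P\"oschl--Teller operator $-\frac{d^{2}}{ds^{2}}+\frac{k^{2}}{\alpha^{2}}-\frac{1}{2}\left(\cosh(s/2)\right)^{-2}$, whose unique bound state has energy $-\frac14$, so a negative eigenvalue exists exactly when $k<\alpha/2$. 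For the regular bubble ($\alpha=2$) this yields nothing for $k\ge1$, while for the singular bubble the value of $\ell$ determined in \cite{GrossiGrumiauPacella2} places $\alpha=\sqrt{2\ell^{2}+4}$ in $(10,12)$, so exactly the five modes $k=1,\dots,5$ each contribute one negative eigenvalue and $m(u_p)=2+2\cdot5=12$. This spectral analysis of the singular Liouville linearization, together with the convergence of the rescaled eigenvalue problems, is the heart of the proof and is missing from --- indeed contradicted by --- your outline.
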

The proof of this theorem is based on a decomposition of the spectrum of the linearized operator at $u_p$, as well as on fine estimates of the radial solution $u_p$ obtained in \cite{GrossiGrumiauPacella2}.
\\
As in the case of positive solutions, a better description of nodal solutions and of their profile can be obtained, for large exponents $p$, by
performing an asymptotic analysis, as $p \rightarrow \infty$. This study started in \cite{GrossiGrumiauPacella1},
by considering a family $(u_p)$ of solutions of \eqref{problem1} satisfying the condition
\[
p\int_{\Omega}|\nabla u|^2\,dx\to16\pi e\qquad\mbox{as $p\to+\infty$}
\]
where $16 \pi e$ is the ``least-asymptotic'' energy for nodal solutions. Under some additional conditions it was proved in \cite{GrossiGrumiauPacella1} that these low-energy solutions concentrate at two distinct points of $\Omega$ and suitable scalings of $u^+_p$ and $u^-_p$ converge to  a regular solution $U$ of \eqref{LiouvilleEquationINTRO}.
\\
Next the case of least-energy radial nodal solutions was considered in \cite{GrossiGrumiauPacella2} where the new phenomenon of
$u^+_p$ and $u^-_p$ concentrating at the same point but with different profile was shown. The precise result is the following:

\begin{theorem}\label{thm:GGP2}

Let $(u_p)$ be a family of least energy radial nodal solutions of \eqref{problem1} in the ball with $u_p$ positive at the center. Then

\begin{itemize}
\item [(i)] a suitable scaling of $u^+_p$ converges in $C^2_{loc} (\mathbb{R}^2)$ to a regular solution $U$ of \eqref{LiouvilleEquationINTRO}

\item [(ii)] a suitable scaling and translation of $u^-_p$ converges in $C^2_{loc} (\mathbb{R}^2 \setminus \{O\})$ to a singular radial solution $V$ of
\begin{equation}\label{SingularLiouvilleEquationINTRO}
\left\{
\begin{array}{lr}
-\Delta V=e^V+H\delta_0\quad\mbox{ in }\R^2\\
\int_{\R^2}e^Vdx<+\infty
\end{array}
\right.
\end{equation}
\end {itemize}
where $H$ is a suitable negative constant and $\delta_0$ is the Dirac measure centered at $O$.
\\
Moreover:
\[
p\int_{\Omega}|\nabla u_p|^2\,dx\to C>16\pi e\qquad\mbox{as $p\to+\infty$}
\]
\end{theorem}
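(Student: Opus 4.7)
By radial symmetry write $u_p = u_p(r)$; let $r_p \in (0,1)$ be the unique nodal radius (so $u_p > 0$ on $[0,r_p)$, $u_p < 0$ on $(r_p,1)$) and $s_p \in (r_p,1)$ the unique radius where $|u_p|$ attains its negative maximum. Put $M_p^+ := u_p(0) = \|u_p^+\|_\infty$, $M_p^- := -u_p(s_p) = \|u_p^-\|_\infty$ and define two scaling parameters $\mu_p^\pm$ by $(\mu_p^\pm)^2 \, p\, (M_p^\pm)^{p-1} = 1$. The first block of work is a priori asymptotic information: using the energy bound $p\int|\nabla u_p|^2\,dx \leq C$ for least-energy radial nodal solutions and arguments in the spirit of \cite{RenWeiTAMS1994,RenWeiPAMS1996,AdiGrossi,GrossiGrumiauPacella1}, one shows that $M_p^\pm$ are bounded above and below away from zero (so $\mu_p^\pm \to 0$). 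The technical heart of the preliminary analysis is then to show that $r_p \to 0$ and $s_p \to 0$ (both bubbles concentrate at the origin); that $r_p/\mu_p^+ \to \infty$ while $r_p/\mu_p^- \to 0$ (the positive bubble lives on a strictly smaller scale than the negative one); and that $\tau_p := M_p^+/M_p^-$ is bounded and, up to subsequences, converges to some $\tau_\infty \geq 1$.

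For (i) I would follow the Adimurthi--Grossi scheme. Set $v_p^+(y) := (p/M_p^+)[u_p(\mu_p^+ y) - M_p^+]$. Then $v_p^+ \leq 0$, $v_p^+(0) = 0$, and on the rescaled positive region $|y| < r_p/\mu_p^+$, which invades $\R^2$,
\begin{equation*}
-\Delta v_p^+ = \bigl(1 + v_p^+/p\bigr)^p.
\end{equation*}
The maximum principle yields uniform local $L^\infty$ bounds, elliptic regularity upgrades them to $C^2_{\mathrm{loc}}$, and $(1+v_p^+/p)^p \to e^{v_p^+}$ lets one pass to the limit, producing $v_p^+ \to U$ in $C^2_{\mathrm{loc}}(\R^2)$, with $U$ the unique radial regular solution of \eqref{LiouvilleEquationINTRO} normalized by $U(0)=0$.

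For (ii) the novelty is to rescale $u_p^-$ around the \emph{origin} (not around the radius $s_p$) with parameter $\mu_p^-$. Set $v_p^-(y) := -(p/M_p^-)[u_p(\mu_p^- y) + M_p^-]$ on $B(0,1/\mu_p^-)\setminus\{0\}$. On the rescaled negative region $|y| > r_p/\mu_p^-$, which invades $\R^2 \setminus \{0\}$ since $r_p/\mu_p^- \to 0$, one has $v_p^- \in [-p,0]$ and $-\Delta v_p^- = (1+v_p^-/p)^p \to e^{v_p^-}$. Uniform estimates on compact subsets of $\R^2\setminus\{0\}$ together with elliptic regularity give $v_p^- \to V$ in $C^2_{\mathrm{loc}}(\R^2\setminus\{0\})$ with $-\Delta V = e^V$ classically away from $0$ and $\int_{\R^2} e^V < \infty$. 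The Dirac source at $0$ is identified by a flux computation: for small fixed $R > 0$,
\begin{equation*}
\int_{B(0,R)}(-\Delta v_p^-)\,dy = -\frac{p}{M_p^-}\int_{|x|<r_p} u_p^p\,dx + \int_{\{r_p/\mu_p^- < |y| < R\}}\bigl(1+v_p^-/p\bigr)^p\,dy.
\end{equation*}
By (i) the first term tends to $-8\pi\tau_\infty$ while the second tends to $\int_{B(0,R)} e^V\,dy$. Letting $R \to 0$ afterwards yields distributionally $-\Delta V = e^V + H\delta_0$ in $\R^2$ with $H = -8\pi\tau_\infty < 0$, i.e.\ the radial singular Liouville profile asserted in \eqref{SingularLiouvilleEquationINTRO}.

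Finally, for the energy identity, $p\int|\nabla u_p|^2\,dx = p\int|u_p|^{p+1}\,dx$; splitting by sign and changing variables through $\mu_p^\pm$ yields
\begin{equation*}
p\int_\Omega |u_p|^{p+1}\,dx \longrightarrow (M_\infty^+)^2 \cdot 8\pi + (M_\infty^-)^2 \int_{\R^2} e^V\,dy,
\end{equation*}
where $M_\infty^\pm := \lim M_p^\pm$. Combined with $\int_{\R^2} e^V > 8\pi$ (the singular source $H\delta_0$ with $H<0$ forces $V$ to have strictly larger mass than the regular Liouville profile, as follows from the classification of radial singular Liouville solutions) and with $M_\infty^\pm = \sqrt{e}$ (by matching inner and outer expansions, as in Adimurthi--Grossi), this gives $C > 16\pi e$. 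The hardest step will be the preliminary qualitative analysis: proving that $u_p^-$ concentrates at the same point as $u_p^+$ and establishing the scale separation $r_p/\mu_p^- \to 0$, both of which rest on the least-energy property and careful ODE analysis of the radial solutions. The flux identification of $H$ and the inner/outer matching giving $M_\infty^\pm = \sqrt{e}$ are the other main technical points.
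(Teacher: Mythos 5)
You should first be aware that this survey does not actually prove Theorem \ref{thm:GGP2}: it is quoted from \cite{GrossiGrumiauPacella2}, whose proof (as the Introduction explicitly notes) rests on one-dimensional ODE estimates for radial solutions. The closest argument carried out in the paper is the proof of the $G$-symmetric analogue, Theorem \ref{TeoremaPrincipaleCasoSimmetrico}, in Section \ref{Section:GSymmetric}. Your sketch follows that PDE-style route (two well-separated scales $\mu_p^+\ll\mu_p^-$, rescaling of the negative part about the \emph{origin}, identification of the Dirac mass by a flux computation), and its overall architecture is the right one. But as a proof it has genuine gaps beyond the ones you flag.

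The most serious omission is that you never state, let alone prove, that $s_p/\mu_p^-$ stays bounded away from $0$. Your $v_p^-$ is normalized only by the condition that it vanishes at $y_p:=s_p/\mu_p^-$; if $y_p\to0$ the normalization point collapses into the puncture, the compactness argument on compact subsets of $\R^2\setminus\{0\}$ loses all control (the limit could degenerate to $-\infty$ everywhere, or be the regular bubble $U$ recentered at $0$ with no singularity at all). Ruling this out is exactly Proposition \ref{prop:NLp+l>0} here, proved by a delicate divergence-theorem contradiction argument comparing the fluxes of the two nodal regions across $\partial B_{|x_p^-|}(0)$; it is the crux of part (ii) and cannot be folded into ``preliminary qualitative analysis'' together with the (comparatively softer) facts $r_p\to0$ and $r_p/\mu_p^-\to0$. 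Two further gaps: the assertion that ``by (i) the first term tends to $-8\pi\tau_\infty$'' is a non sequitur, since (i) only yields $\liminf_p p\int_{|x|<r_p}u_p^p\,dx\ge 8\pi M_\infty^+$, and the matching upper bound (no mass trapped between the scales $\mu_p^+$ and $r_p$) requires a pointwise decay estimate of the type $(\mathcal{P}_3^1)$ or a one-dimensional Pohozaev argument. Finally, $M_\infty^\pm=\sqrt e$ is asserted with a vague appeal to ``matching''; for the negative part of a nodal solution this does not follow from Adimurthi--Grossi and is delicate (even for positive solutions the survey only records the inequality $m_i\ge\sqrt e$ as a recent result). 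Note, however, that the strict inequality $C>16\pi e$ can be salvaged without it: Ren--Wei gives $p\int_{\mathcal N_p^+}|u_p|^{p+1}\ge 8\pi e+o_p(1)$, while your own flux computation forces $\int_{\R^2}e^V=8\pi(1+2\tau_\infty)\ge 24\pi$, whence $\liminf_p p\int_{\mathcal N_p^-}|u_p|^{p+1}\ge (M_\infty^-)^2\int_{\R^2}e^V\ge 24\pi>8\pi e$ using only $M_\infty^-\ge1$.
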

So the theorem shows the existence of solutions which asymptotically look like  a tower of two bubbles corresponding to solutions of two different Liouville problems in $\mathbb{R}^2$, namely \eqref{LiouvilleEquationINTRO} and \eqref{SingularLiouvilleEquationINTRO}.
\\
In Section \ref{Section:GSymmetric} of this paper we show that the same phenomenon appears in other symmetric domains different from the balls. We obtain this through the asymptotic analysis of the sign-changing solutions found in Theorem \ref{thm:existenceINTRO}.
\\ 
The starting point for this result is an asymptotic analysis of a general family $(u_p)$ of solutions of \eqref{problem1} satisfying the condition \eqref{energylimitINTRO}. This first result, inspired by the paper \cite{Druet} (see also \cite{DruetHebeyRobert}) can be viewed as a first step towards the complete classification of the asymptotic behavior of general sign-changing solutions of \eqref{problem1}.
\\
The hardest part of the proof of this result relies in showing that the rescaling about the minimum point $x_p^-$ converges to a radial singular solution of a singular Liouville problem in $\R^2$. Indeed, while the rescaling of $u_p$ about the maximum point $x_p^+$ can be studied in a ``canonical'' way, the analysis of the rescaling about $x_p^-$ requires additional arguments. In particular the presence of the nodal line, with an unknown geometry, gives difficulties which, obviously, are not present when dealing with positive solutions or with radial sign-changing solutions. Also the proofs of the results for nodal radial solutions of \cite{GrossiGrumiauPacella2} cannot be of any help since they depend strongly on one-dimensional estimates.
\\
We would like to point out that the analysis carried out in \cite{DeMarchisIanniPacellaJEMS} also allows to get the same asymptotic result substituting the bound on the energy with a bound on the Morse index of the solutions, (see \cite{DeMarchisIanniPacellaAMPA}).\\

Finally we observe that the bubble-tower solutions of \eqref{problem} are also interesting in the study of the associated heat flow because they induce a peculiar blow-up phenomenon (see \cite{CazenaveDicksteinWeissler, DicksteinPacellaSciunzi, MarinoPacellaSciunzi} and in particular \cite{DeMarchisIanni}).\\

We conclude by remarking that the phenomenon of nodal solutions of \eqref{problem1}  with positive and negative part concentrating at the same point and having different asymptotic profile does not seem to appear in higher dimension as $p$ approaches the critical Sobolev exponent.\\

Finally nodal solutions to \eqref{problem1} concentrating at a finite number of point without exhibiting the bubble tower phenomenon, i.e. only simple concentration points, also exist (see \cite{EMPnodal}).

\

\

\section{General asymptotic analysis} \label{SectionGeneralAnalysis}

\

This section is mostly devoted to the study of the  asymptotic behavior of a general family $(u_p)_{p>1}$ of nontrivial solutions of \eqref{problem1} satisfying the uniform upper bound  
\begin{equation}
\label{energylimit}
p\int_{\Omega}|\nabla u_p|^2 dx\leq C,\ \mbox{ for some  $C>0$ independent of $p$.}
\end{equation}
At the end of the section we also exhibit some recent results related to families of positive solutions.
The material presented is mainly based on some of the results contained in \cite{DeMarchisIanniPacellaJEMS} plus smaller additions or minor improvements. We also refer to \cite{DIPpositive} for the complete analysis in the case of  positive solutions.

\

Recall that in \cite{RenWeiTAMS1994}  it has been proved that for any family 
$(u_p)_{p>1}$ of nontrivial solutions of \eqref{problem}  the following lower bound holds
\begin{equation}
\label{energylimitLower}
\liminf_{p\rightarrow+\infty}p\int_{\Omega}|\nabla u_p|^2 dx\geq 8\pi e,
\end{equation}
so the constant $C$ in \eqref{energylimit} is intended to satisfy $C\geq 8\pi e$. Moreover if $u_p$ is sign-changing then we also know that (see again \cite{RenWeiTAMS1994})
\begin{equation}
\label{energylimitLowerPosNeg} 
\liminf_{p\rightarrow+\infty}p\int_{\Omega}|\nabla u_p^{\pm}|^2 dx\geq 8\pi e.
\end{equation}

\

If we denote by $E_p$ the energy functional associated to \eqref{problem1}, i.e.
\[
E_p(u):=\frac{1}{2}\|\nabla u\|^2_{2}-\frac{1}{p+1}\|u\|_{p+1}^{p+1},\ \ u\in H^1_0(\Omega),
\]
since for a solution $u$  of \eqref{problem1}
\begin{equation}
\label{energiaSuSoluzioni}
E_p(u)=(\frac12-\frac1{p+1})\|\nabla u\|^2_2=(\frac12-\frac1{p+1})\|u\|^{p+1}_{p+1},
\end{equation}
then \eqref{energylimit}, \eqref{energylimitLower} and \eqref{energylimitLowerPosNeg} are equivalent to uniform upper and lower bounds for the energy $E_p$ or for the $L^{p+1}$-norm, indeed
\begin{eqnarray*}
&
\displaystyle\limsup_{p\rightarrow +\infty}\  2 pE_p(u_p) = \limsup_{p\rightarrow +\infty}\  p\int_{\Omega} |u_p|^{p+1} \, dx = \limsup_{p\rightarrow +\infty}\  p\int_{\Omega} |\nabla u_p|^{2} \, dx\leq C
\\
&\displaystyle \liminf_{p\rightarrow +\infty}\ 2 pE_p(u_p)=\liminf_{p\rightarrow +\infty}\ p\int_{\Omega} |u_p|^{p+1} \, dx = \liminf_{p\rightarrow +\infty}\ p\int_{\Omega} |\nabla u_p|^{2} \, dx\geq 8\pi e
\end{eqnarray*}
and  if $u_p$ is sign-changing, also
\begin{eqnarray*}
&\displaystyle \liminf_{p\rightarrow +\infty}\ 2 pE_p(u_p^{\pm})=\liminf_{p\rightarrow +\infty}\ p\int_{\Omega} |u_p^{\pm}|^{p+1} \, dx = \liminf_{p\rightarrow +\infty}\ p\int_{\Omega} |\nabla u_p^{\pm}|^{2} \, dx\geq 8\pi e, 
\end{eqnarray*}
we will use all these equivalent formulations throughout the paper.
\

\

Observe that by the assumption in \eqref{energylimit} we have that
\[E_p(u_p)\rightarrow 0, \ \ \|\nabla u_p\|_2\rightarrow 0,\ \  \mbox{as $p\rightarrow +\infty$}\]
\[E_p(u_p^{\pm})\rightarrow 0, \ \ \|\nabla u_p^{\pm}\|_2\rightarrow 0,\ \  \mbox{as $p\rightarrow +\infty\qquad$ (if $u_p$ is sign-changing)}\]
so in particular $u_p^{\pm}\rightarrow 0$ a.e. as $p\rightarrow +\infty$.
\\
In this section we will show that the solutions $u_p$ do not vanish as $p\rightarrow +\infty$ (both $u_p^{\pm}$ do not vanish if $u_p$ is sign-changing) and  that moreover, differently with what happens in higher dimension,  they do not blow-up (see Theorem \ref{teo:BoundEnergia} below). Moreover we will show that they concentrate at a finite number of points and we will also describe the asymptotic behavior of suitable rescalings of $u_p$ (``bubbles'') around suitable ``concentrating'' sequences of points (see Theorem \ref{thm:x1N} in the following).

\

\

Our first results is the following:
\begin{theorem}\label{teo:BoundEnergia}
Let $(\upp)$ be a family of solutions to \eqref{problem1} satisfying \eqref{energylimit}. Then
\begin{itemize}

\item[\emph{$(i)$}] (No vanishing).\\
\[\|u_p\|_{\infty}^{p-1}\geq \lambda_1,\] where $\lambda_1=\lambda_1(\Omega)(>0)$ is the first eigenvalue of the operator $-\Delta$ in $H^1_0(\Omega)$.\\
 If $u_p$ is sign-changing then also $\|u_p^\pm\|_{\infty}^{p-1}\geq \lambda_1$.

\item[$(ii)$] (Existence of the first bubble).
Let $(x_p^+)_p\subset\Omega$ such that $|u_p(x_p^+)|=\|u_p\|_{\infty}$.
Let us set
\begin{equation}\label{muppiu}
\mu_{p}^+:=\left(p |\upp(x_p^+)|^{p-1}\right)^{-\frac{1}{2}}
\end{equation}
and  for  $ x\in\widetilde{\Omega}_{p}^+:=\{x\in\R^2\,:\,x_p^++\mu_{p}^+x\in\Omega\}$
\begin{equation}\label{scalingMax}
v_{p}^+(x):=\fr{p}{\upp(x_p^+)}(\upp(x_p^++\mu_{p}^+ x)-\upp(x_{1,p})).
\end{equation}
Then  $\mu_{p}^+\to0$ as $p\to+\infty$  and
\[v_{p}^+\To U\mbox{ in }C^2_{loc}(\R^2)\mbox{ as }p\to+\infty\] where
\begin{equation}\label{v0}
U(x)=\log\left(\fr1{1+\fr18 |x|^2}\right)^2
\end{equation}
is the solution of $-\lap U=e^{U}$ in $\R^2$, $U\leq 0$, $U(0)=0$ and $\int_{\mathbb{R}^2}e^{U}=8\pi$.
\\
Moreover
\begin{equation}\label{soloMaggiore1InGenerale}
\liminf_{p\rightarrow +\infty}\|u_p\|_{\infty}\geq 1.
\end{equation}

\item[\emph{$(iii)$}] (No blow-up). There exists $C>0$ such that 
\begin{equation}
\label{boundSoluzio}\|\upp\|_{\infty}\leq C,\ \mbox{  for all $p>1$.}
\end{equation}
\item[\emph{$(iv)$}] There exist constants $c,C>0$, such that for all $p$ sufficiently large we have 
\begin{equation}
\label{boundEnergiap}c\leq p\int_\Omega |u_p|^p dx \leq C.
\end{equation}
\item[\emph{$(v)$}] $\sqrt{p}u_p\rightharpoonup 0$ in $H^1_{0}(\Omega)$ as $p\rightarrow +\infty$.
\end{itemize}
\end{theorem}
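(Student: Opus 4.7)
The plan is to establish the five items in the order (i), (ii), (iii), (iv), (v); each feeds the next.

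For (i), I would simply test \eqref{problem1} against $u_p^+\in H^1_0(\Omega)$ to obtain
\[
\int_\Omega|\nabla u_p^+|^2\,dx=\int_\Omega(u_p^+)^{p+1}\,dx\leq \|u_p^+\|_\infty^{p-1}\int_\Omega(u_p^+)^2\,dx,
\]
and combine with the Poincaré inequality $\lambda_1\int(u_p^+)^2\,dx\leq\int|\nabla u_p^+|^2\,dx$. This yields $\|u_p^+\|_\infty^{p-1}\geq\lambda_1$; the same computation works for $u_p^-$ and for $u_p$ itself, and also implies $\liminf_{p\to\infty}\|u_p\|_\infty\geq\lim_{p\to\infty}\lambda_1^{1/(p-1)}=1$, i.e.\ \eqref{soloMaggiore1InGenerale}.

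For (ii), (i) already gives $p\|u_p\|_\infty^{p-1}\to\infty$, hence $\mu_p^+\to 0$. A direct computation shows that on $\widetilde\Omega_p^+$
\[
-\Delta v_p^+=\left|1+\frac{v_p^+}{p}\right|^{p-1}\!\left(1+\frac{v_p^+}{p}\right),\qquad v_p^+\leq 0,\ v_p^+(0)=0,
\]
with right-hand side pointwise bounded by $1$. After establishing the preliminary $d(x_p^+,\partial\Omega)/\mu_p^+\to\infty$, so that $\widetilde\Omega_p^+$ exhausts $\R^2$, standard elliptic regularity and Harnack (together with the normalization $v_p^+(0)=0$) give $C^2_{loc}$-precompactness, and any limit $U$ solves $-\Delta U=e^U$ with $U\leq 0$, $U(0)=0$. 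Finiteness of the total mass of $e^U$ is obtained from the change-of-variable identity
\[
p\int_\Omega|u_p|^{p+1}\,dx=\|u_p\|_\infty^2\int_{\widetilde\Omega_p^+}\left|1+\frac{v_p^+}{p}\right|^{p+1}\,dx,
\]
whose left-hand side is bounded by \eqref{energylimit}; Fatou combined with $\liminf\|u_p\|_\infty\geq 1$ then forces $\int_{\R^2}e^U<\infty$, and the classical Liouville classification identifies $U$ with \eqref{v0}.

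The same identity also proves (iii): if $\|u_p\|_\infty\to\infty$ along a subsequence, Fatou forces the right-hand side to exceed $\|u_p\|_\infty^2(8\pi-o(1))$, contradicting \eqref{energylimit}. For (iv), Hölder yields $\|u_p\|_{p+1}^{p+1}\leq \|u_p\|_\infty\,\|u_p\|_p^p$, so the lower bound $p\|u_p\|_p^p\geq p\|u_p\|_{p+1}^{p+1}/\|u_p\|_\infty\geq c>0$ follows from \eqref{energylimitLower} and (iii), while the upper bound $p\|u_p\|_p^p\leq C$ follows from $\|u_p\|_p^p\leq\|u_p\|_{p+1}^p|\Omega|^{1/(p+1)}$ combined with $p\|u_p\|_{p+1}^{p+1}\leq C$. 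Finally for (v), $\sqrt{p}\,u_p$ is bounded in $H^1_0(\Omega)$ by \eqref{energylimit}, and for any $\varphi\in C^\infty_c(\Omega)$
\[
\left|\sqrt{p}\int_\Omega\nabla u_p\cdot\nabla\varphi\,dx\right|=\left|\sqrt{p}\int_\Omega|u_p|^{p-1}u_p\,\varphi\,dx\right|\leq \sqrt{p}\,\|\varphi\|_\infty\int_\Omega|u_p|^p\,dx\leq \frac{C\|\varphi\|_\infty}{\sqrt{p}}\to 0
\]
by (iv); density of $C^\infty_c(\Omega)$ in $H^1_0(\Omega)$ then gives $\sqrt{p}\,u_p\rightharpoonup 0$.

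The main obstacle is the boundary-avoidance step $d(x_p^+,\partial\Omega)/\mu_p^+\to\infty$ hidden inside (ii). Everything else reduces to Poincaré, Hölder, Fatou and the rescaling identity, but if $x_p^+$ were to approach $\partial\Omega$ at the concentration scale the limit bubble would be forced to live on a half-plane and the classification would fail; ruling this out is the genuinely delicate input and will typically require either a Kelvin reflection or a boundary-regularity estimate exploiting that $u_p$ vanishes on $\partial\Omega$ while $|u_p(x_p^+)|$ is bounded below by~(i).
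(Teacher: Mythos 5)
Your items (i), (iii), (iv) and (v) coincide with the paper's arguments (Poincar\'e for (i), the change-of-variables identity plus Fatou for (iii), H\"older plus (iii) for (iv), the $p^{-1/2}$ gain in the weak formulation for (v)), and the overall skeleton of (ii) is also the paper's. But there is one genuine gap, exactly at the point you flag: you assume the boundary-exclusion statement $d(x_p^+,\partial\Omega)/\mu_p^+\to\infty$ (equivalently, that $\widetilde\Omega_p^+$ exhausts $\R^2$ rather than converging to a half-plane) and only gesture at "a Kelvin reflection or a boundary-regularity estimate" to prove it. Neither suggestion is worked out, and neither is what is actually needed; note also that at this stage of the proof you cannot yet invoke (iii), so you have no a priori bound on $\|u_p\|_\infty$ to feed into a boundary-regularity argument.

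The paper closes this gap with a purely harmonic-function argument that you are missing. Suppose, after rescaling, $\widetilde\Omega_p^+\to\R\times\left]-\infty,R\right[$ for some $R\geq 0$. On $\widetilde\Omega_p^+\cap B_{2R+1}(0)$ decompose $v_p^+=\varphi_p+\psi_p$, where $\varphi_p$ solves $-\Delta\varphi_p=-\Delta v_p^+$ with zero boundary values; since $|\Delta v_p^+|\leq 1$, elliptic theory makes $\varphi_p$ uniformly bounded. Then $\psi_p$ is harmonic, bounded above (because $v_p^+\leq 0$ and $\varphi_p$ is bounded), and equals $-p$ on $\partial\widetilde\Omega_p^+\cap B_{2R+1}(0)$, a boundary portion that converges to a fixed piece of the line $\R\times\{R\}$ at distance $R$ from the origin. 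Harnack's inequality applied to the nonnegative harmonic function $\sup\psi_p-\psi_p$ (or trivially if $R=0$) then forces $\psi_p(0)\to-\infty$, contradicting $\psi_p(0)=-\varphi_p(0)=O(1)$. This rules out the half-plane limit with no reflection and no boundary estimate on $u_p$; the same decomposition is then reused to get the interior uniform bounds on $v_p^+$ before passing to the Liouville limit. Without this (or an equivalent) argument your proof of (ii) — and hence of (iii), which depends on the full-mass lower bound $\int_{\R^2}e^U=8\pi$ — is incomplete.
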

\begin{proof}
Point \emph{$(i)$} has been first proved for positive solutions in \cite{RenWeiTAMS1994}, here we follows the proof in  \cite[Proposition 2.5]{GrossiGrumiauPacella1}. If $u_p$ is sign-changing, just observe that  $u_p^{\pm}\in H^1_0(\Omega)$, where we know that 
\[0<8\pi e-\varepsilon \overset{\eqref{energylimitLower}/\eqref{energylimitLowerPosNeg}}{\leq} \int_{\Omega}|\nabla u_p^{\pm}|^2\,dx\overset{\eqref{energylimit}}{\leq} C<+\infty\] and that also by Poincare inequality
\[\int_{\Omega} |\nabla u_p^{\pm}|^2\,dx=\int_{\Omega}|u_p^{\pm}|^{p+1}\,dx\leq \|u_p^{\pm}\|_{\infty}^{p-1}\int_{\Omega}|u_p^{\pm}|^2\,dx\leq \frac{\|u_p^{\pm}\|_{L^{\infty}(\Omega)}^{p-1}}{\lambda_1(\Omega)}\int_{\Omega}|\nabla u_p^{\pm}|^2\,dx.\]
If $u_p$ is not sign-changing just observe that either $u_p=u_p^+$ or $u_p=u_p^-$ and the same proof as before applies.

\

\

The proof of \emph{$(ii)$} follows the same ideas in \cite{AdiGrossi} where the same result has been proved for least energy (positive) solutions. 
We let $x_{p}^+$ be a point in $\Omega$ where $|\upp|$ achieves its maximum. Without loss of generality we can assume that
\bel\label{x1p}
\upp(x_{p}^+)=\max_\Omega \upp>0.
\eel
By \emph{(i)} we have that 
$
p\upp(x_{p}^+)^{p-1}\to+\infty$ as $p\to+\infty$,
so \eqref{soloMaggiore1InGenerale} holds and moreover
$\mu_{1,p}\to 0$, where $\mu_p^+$ is defined in \eqref{muppiu}. Let
$
\widetilde{\Omega}_{p}^+$ and $
v_{p}^+$ be defined as in  \eqref{scalingMax}, then
by \eqref{x1p} we have
\begin{equation}\label{b}
v_{p}^+(0)=0\quad\textrm{and}\quad v_{p}^+\leq 0\textrm{  in $\widetilde{\Omega}_{p}^+$.}
\end{equation}
Moreover $v_{p}^+$ solves
\bel\label{lapv1p}
-\lap v_{p}^+=\left|1+\frac{v_{p}^+}{p}\right|^p\left(1+\frac{v_{p}^+}{p}\right)\quad\textrm{in $\widetilde{\Omega}_{p}^+$},
\eel
with \[\left|1+\frac{v_{p}^+}{p}\right|\leq 1 \ \ \ \mbox{and} \ \ \ v_{p}^+=-p\quad\textrm{on $\partial\widetilde{\Omega}_{p}^+$.}\]
Then
\bel\label{boundlap1}
|-\lap v_{p}^+|\leq 1\quad\textrm{in $\widetilde{\Omega}_{p}^+$}.
\eel
Using  \eqref{b} and \eqref{boundlap1} we prove that
\begin{equation}\label{tuttoR2}
\widetilde{\Omega}_{p}^+\to\R^2\ \mbox{ as }p\to+\infty.
\end{equation}Indeed since $\mu_{p}^+\rightarrow 0$ as $p\rightarrow + \infty$, either $\widetilde{\Omega}_{p}^+\rightarrow\mathbb R^2$ or $\widetilde{\Omega}_{p}^+\rightarrow\mathbb R\times ]-\infty, R[$ as $p\rightarrow +\infty$ for some $R\geq 0$ (up to a rotation). In the second case we let
\[
v_{p}^+=\varphi_p+\psi_p \ \mbox{ in }\widetilde{\Omega}_{p}^+\cap B_{2R+1}(0)
\]
with
$-\lap\varphi_p=-\lap v_{p}^+$ in $\widetilde{\Omega}_{p}^+\cap B_{2R+1}(0)$ and $\psi_p= v_{p}^+$ in $\partial\left(\widetilde{\Omega}_{p}^+\cap B_{2R+1}(0)\right)$.

Thanks to \eqref{boundlap1}  we have, by standard elliptic theory, that $\varphi_p$ is uniformly bounded in $\widetilde{\Omega}_{1}^+\cap B_{2R+1}(0)$. So the function $\psi_p$ is harmonic in $\widetilde{\Omega}_{p}^+\cap B_{2R+1}(0)$, bounded from above by \eqref{b} and satisfies  $\psi_p=-p\rightarrow -\infty$ on $\partial\widetilde{\Omega}_{p}^+\cap B_{2R+1}(0)$. Since $\partial\widetilde{\Omega}_{p}^+\cap B_{2R+1}(0)\rightarrow (\mathbb{R}\times\{R\})\cap B_{2R+1}(0)$ as $p\rightarrow +\infty$  one easily gets that $\psi_p(0)\rightarrow -\infty$ as $p\rightarrow +\infty$ (if $R=0$ this is trivial, if $R>0$ it follows by Harnack inequality). This is  a contradiction since $\psi_p(0)=-\varphi_p(0)$ and $\varphi_p$ is bounded, hence \eqref{tuttoR2} is proved.
\\

Then for any $R>0$, $B_R(0)\subset\widetilde\Omega_{1,p}$ for $p$ sufficiently large. So $(v_{p}^+)$ is a family of nonpositive functions with uniformly bounded Laplacian in $B_R(0)$ and with $v_{p^+}(0)=0$.

Thus, arguing as before, we write $v_{p}^+=\varphi_p+\psi_p $ where $\varphi_p$ is uniformly bounded in $B_R(0)$  and $\psi_p $ is an harmonic function which is uniformly bounded from above. By Harnack inequality, either $\psi_p $ is uniformly bounded in $B_R(0)$ or it tends to $-\infty$ on each compact set of $B_R(0)$.
The second alternative cannot happen because, by definition, $\psi_p(0)=v_{p}^+(0)-\varphi_p(0) =-\varphi_p(0)\geq -C$.
Hence we obtain that $v_{p}^+$ is uniformly bounded in $B_R(0)$, for all $R>0$. 
By standard elliptic regularity theory one has that $v_{p}^+$ is bounded in $C^{2,\alpha}_{loc}(\R^2).$ 
Thus by Arzela-Ascoli Theorem and a diagonal process on $R\rightarrow +\infty$, after passing to a subsequence
\bel\label{v1pv0}
v_{p}^+\to U\quad\textrm{in $C^2_{loc}(\R^2)$ as $p\to+\infty$},
\eel
with $U\in C^2(\R^2)$, $U\leq0$ and $U(0)=0$. Thanks to \eqref{lapv1p} (on each ball also $1+\frac{v_{p}^+}{p}>0$ for $p$ large) and \eqref{v1pv0} we get
that $U$ is a solution of
$
-\lap U=e^{U}$ in $\R^2$.
Moreover for any $R>0$, by \eqref{RemarkMaxCirca1}, we have
\begin{eqnarray*}
\int_{B_R(0)}e^{U(x)}dx&\stackrel{\eqref{v1pv0}\,+\,\textrm{{Fatou}}}{\leq} &\int_{B_R(0)}\fr{|\upp(x_{p}^++\mu_{p}^+x)|^{p+1}}{\upp(x_{p}^+)^{p+1}}dx+o_p(1)\\
&=&\fr p{\|u_p\|_{\infty}^2}\int_{B_{R\mu_{1,p}}(x_{1,p})}|\upp(y)|^{p+1}dy+o_p(1)\\
&\stackrel{\eqref{soloMaggiore1InGenerale}}{\leq}&\fr p{(1-\ep)^2}\int_{\Omega}|\upp(y)|^{p+1} dy+o_p(1)\stackrel{\eqref{energylimit}}{\leq} C<+\infty,
\end{eqnarray*}
so that $e^{U}\in L^1(\R^2)$. Thus, since $U(0)=0$, by virtue of the classification due to Chen and Li \cite{ChenLi} we obtain \eqref{v0}. Last an easy computation shows that $\int_{\mathbb{R}^2}e^{U}=8\pi$.

\

Point 
\emph{$(iii)$} has been first proved in \cite{RenWeiTAMS1994}, here we write a simpler proof which follows directly  from (ii) by applying Fatou's lemma. An analogous argument can be found in \cite[Lemma 3.1]{AdiGrossi}. Indeed 
\[C\overset{\eqref{energylimit}}{\geq}p\int_{\Omega}|u_p(y)|^{p+1}dy=\|u_p\|_{\infty}^2\int_{\widetilde{\Omega}_{p}^+}\left|1+\frac{v_{p}^+(x)}{p} \right|^{p+1}dx\overset{\mbox{\footnotesize{\emph{$(ii)$}-Fatou}}}{\geq}\|u_p\|_{\infty}^2\int_{\R^2}e^{U(x)}dx=\|u_p\|_{\infty}^2 8\pi\]

\

\

\emph{$(iv)$} follows directly from  \emph{$(iii)$}. Indeed on the one hand
\[
0<C\overset{\eqref{energylimitLower}-\eqref{energiaSuSoluzioni}}{\leq} p\int_\Omega |u_p|^{p+1}\, dx\leq\|u_p\|_{\infty}p\int_\Omega |u_p|^p \, dx\stackrel{\emph{$(iii)$}}{\leq}C p\int_\Omega |u_p|^p \, dx
\]
On the other hand by H\"older inequality 
\[
p\int_\Omega |u_p|^p\, dx \leq |\Omega|^{\frac{1}{p+1}} p\left(\int_\Omega |u_p|^{p+1}\,dx\right)^{\frac{p}{p+1}}\stackrel{\eqref{energylimit}}{\leq}C.
\]

\

\

To prove \emph{$(v)$} we need  \emph{$(iv)$}. Indeed let us note that, since \eqref{energylimit} holds,
 there exists $w\in H^1_0(\Omega)$ such that, up to a subsequence,  $\sqrt{p}u_p \rightharpoonup  w$ in $H^1_0(\Omega)$. We want to show that $w=0$ a.e. in $\Omega$.

Using the equation \eqref{problem1}, for any test function $\varphi\in C^{\infty}_0(\Omega)$, we have
\[
\int_{\Omega}\nabla (\sqrt{p}u_p)\nabla\varphi \,dx = \sqrt{p}\int_{\Omega}|u_p|^{p-1}u_p\varphi\, dx
\leq\frac{\|\varphi\|_{\infty}}{\sqrt{p}} p \int_{\Omega}|u_p|^{p}\,dx
\overset{(iv)}{\leq}\frac{\|\varphi\|_{\infty}}{\sqrt{p}} C
\]
for $p$ large. Hence
\[
\int_{\Omega}\nabla w\nabla\varphi\,dx=0\quad \forall\varphi\in C^{\infty}_0(\Omega),
\]
which implies that
 $w=0$ a.e. in $\Omega$.
\end{proof}

\

\

In order to show our next results we need to introduce some notations. Given a family $(u_p)$ of solutions of \eqref{problem1}
and assuming that there exists  $n\in\N\setminus\{0\}$ families of points $(\xip)$, $i=1,\ldots,n$  in $\Omega$ such that
\begin{equation}
\label{muVaAZero}
p|\upp(\xip)|^{p-1}\to+\infty\ \mbox{ as }\ p\to+\infty,
\end{equation}
we define the parameters $\mip$ by
\bel\label{mip}
\mip^{-2}=p |\upp(\xip)|^{p-1},\ \mbox{ for all }\ i=1,\ldots,n.
\eel
By \eqref{muVaAZero} it is clear that $\mip\to0$ as $p\to+\infty$ and that
\begin{equation}\label{RemarkMaxCirca1}
\liminf_{p\rightarrow +\infty}|u_p(\xip)|\geq 1.
\end{equation}
Then we define the concentration set
\bel\label{S}
\mathcal{S}=\left\{\lim_{p\to+\infty}\xip,\,i=1,\ldots,n\right\}\subset\bar\Omega
\eel
and the function
\bel\label{RNp}
R_{n,p}(x)=\min_{i=1,\ldots,n} |x-\xip|, \ \forall x\in\Omega.
\eel

Finally we introduce the following properties:
\begin{itemize}
\item[$(\mathcal{P}_1^n)$] For any $i,j\in\{1,\ldots,n\}$, $i\neq j$,
\[
\lim_{p\to+\infty}\fr{|\xip-\xjp|}{\mip}=+\infty.
\]
\item[$(\mathcal{P}_2^n)$] For any $i=1,\ldots,n$, for $x\in\widetilde{\Omega}_{i,p}:=\{x\in\R^2\,:\,x_{i,p}+\mu_{i,p}x\in\Omega\}$
\begin{equation}\label{vip}
v_{i,p}(x):=\fr{p}{\upp(\xip)}(\upp(\xip+\mip x)-\upp(\xip))\To U(x)
\end{equation}
in $C^2_{loc}(\R^2)$ as $p\to+\infty$, where $U$ is the same function in \eqref{v0}.
\item[$(\mathcal{P}_3^n)$] There exists $C>0$ such that
\[
p R_{n,p}(x)^2 |\upp(x)|^{p-1}\leq C
\]
for all $p>1$ and all $x\in \Omega$.
\item[$(\mathcal{P}_4^n)$] There exists $C>0$ such that
\[
p R_{n,p}(x) |\nabla \upp(x)|\leq C
\]
for all $p>1$ and all $x\in \Omega$.
\end{itemize}

\

\begin{lemma}\label{lemma:BoundEnergiaBassino}
If there exists  $n\in\N\setminus\{0\}$ such that the properties $(\mathcal{P}_1^n)$ and $(\mathcal{P}_2^n)$ hold for families $(\xip)_{i=1,\ldots,n}$ of points satisfying \eqref{muVaAZero}, then
\[
p\int_\Omega |\na\upp|^2\,dx\geq8\pi\sum_{i=1}^n \alpha_i^2+o_p(1)\ \mbox{ as }p\rightarrow +\infty,
\]
where  $\alpha_i:=\liminf_{p\to+\infty}|\upp(\xip)|\ (\overset{\eqref{RemarkMaxCirca1}}{\geq} 1)$.

\end{lemma}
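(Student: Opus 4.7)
The plan is to start from the Pohozaev-type identity coming directly from the equation, namely $p\int_\Omega|\nabla u_p|^2\,dx=p\int_\Omega|u_p|^{p+1}\,dx$ (obtained by testing \eqref{problem1} with $u_p$), localize the right-hand side around each concentration point $\xip$, and read off the $L^1$--mass of the limiting bubble $e^{U}$ on each rescaled ball. Since $\int_{\R^2}e^{U}=8\pi$, each point will contribute $8\pi\alpha_i^2$, and the points will contribute independently thanks to the separation property $(\mathcal{P}_1^n)$.

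Concretely, I would fix $R>0$ and look at the balls $B_{R\mip}(\xip)$, $i=1,\dots,n$. By $(\mathcal{P}_1^n)$, for every $i\neq j$ one has $|\xip-\xjp|/\mip\to+\infty$ and (exchanging the roles of $i$ and $j$) $|\xip-\xjp|/\mjp\to+\infty$, so for $p$ large these balls are pairwise disjoint and all contained in $\Omega$. Hence
\[
p\int_\Omega|u_p|^{p+1}\,dx\;\geq\;\sum_{i=1}^n p\int_{B_{R\mip}(\xip)}|u_p|^{p+1}\,dy.
\]
On each summand I perform the change of variable $y=\xip+\mip x$. Using $u_p(\xip+\mip x)=u_p(\xip)\bigl(1+\tfrac{v_{i,p}(x)}{p}\bigr)$ (from the definition \eqref{vip}) together with the identity $p\mip^2=|u_p(\xip)|^{1-p}$ coming from \eqref{mip}, the Jacobian and the powers combine to give
\[
p\int_{B_{R\mip}(\xip)}|u_p|^{p+1}\,dy\;=\;|u_p(\xip)|^{2}\int_{B_R(0)}\Bigl|1+\frac{v_{i,p}(x)}{p}\Bigr|^{p+1}\,dx.
\]

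Now I pass to the limit. By $(\mathcal{P}_2^n)$, $v_{i,p}\to U$ in $C^2_{loc}(\R^2)$, so on $B_R(0)$ the family $v_{i,p}$ is uniformly bounded and $\bigl|1+\tfrac{v_{i,p}}{p}\bigr|^{p+1}\to e^{U}$ uniformly on $B_R(0)$; consequently $\int_{B_R(0)}|1+v_{i,p}/p|^{p+1}\,dx\to\int_{B_R(0)}e^{U}\,dx$. Combined with $\liminf_{p}|u_p(\xip)|^{2}=\alpha_i^{2}\geq 1$, superadditivity of $\liminf$ on finite sums yields
\[
\liminf_{p\to+\infty}p\int_\Omega|\nabla u_p|^2\,dx\;\geq\;\sum_{i=1}^n\alpha_i^{2}\int_{B_R(0)}e^{U}\,dx.
\]
Letting finally $R\to+\infty$ and recalling that $\int_{\R^2}e^{U}=8\pi$ (stated in Theorem \ref{teo:BoundEnergia}$(ii)$), I obtain the claim in the form $p\int_\Omega|\nabla u_p|^2\,dx\geq 8\pi\sum_{i=1}^n\alpha_i^{2}+o_p(1)$.

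There is no real obstacle: the only points requiring minor care are (a) checking that the pairwise disjointness of the balls $B_{R\mip}(\xip)$ follows from $(\mathcal{P}_1^n)$ for \emph{every} fixed $R$ (which is immediate after noting that $(\mathcal{P}_1^n)$ is symmetric in $i,j$ once one swaps the indices), and (b) justifying the convergence of the rescaled integrand on $B_R(0)$, which is a standard consequence of the locally uniform convergence $v_{i,p}\to U$ and of the elementary estimate $(p+1)\log\bigl(1+\tfrac{v_{i,p}}{p}\bigr)=v_{i,p}+o(1)$ uniformly on $B_R(0)$. Note that at no stage do we need an \emph{upper} bound on $|u_p(\xip)|$, so the argument does not rely on Theorem~\ref{teo:BoundEnergia}$(iii)$.
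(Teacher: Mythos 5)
Your proposal is correct and follows essentially the same route as the paper: localize $p\int_\Omega|u_p|^{p+1}$ on the disjoint balls $B_{R\mu_{i,p}}(x_{i,p})$, rescale, identify the bubble mass $\int_{B_R(0)}e^U$, and let $R\to+\infty$. The only (immaterial) difference is that you pass to the limit in the rescaled integral via uniform convergence of $|1+v_{i,p}/p|^{p+1}$ on $B_R(0)$, whereas the paper invokes Fatou's lemma at that step.
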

\begin{proof}
Let us write, for any $R>0$
\begin{eqnarray}\label{appo1}
p\int_{B_{R\mu_{i,p}}(\xip)} |\upp|^{p+1}\,dx&=&\int_{B_R(0)}\fr{|\upp(\xip+\mip y)|^{p+1}}{|\upp(\xip)|^{p-1}}\,dy\nonumber\\
 &=& \upp^2(\xip)\int_{B_R(0)}\left|1+\fr{v_{i,p}(y)}{p}\right|^{p+1}\,dy.
\end{eqnarray}
Thanks to $(\mathcal{P}_2^n)$, we have
\bel\label{appo2}
\|v_{i,p}-U\|_{L^{\infty}(B_R(0))}=o_p(1)\ \mbox{ as }p\rightarrow +\infty.
\eel
Thus by \eqref{appo1}, \eqref{appo2} and Fatou's lemma
\bel\label{appo3}
\liminf_{p\to+\infty}\,\left(p\int_{B_{R\mu_{i,p}}(\xip)}|\upp|^{p+1}\,dx\right)\geq\alpha^2_i\int_{B_R(0)} e^{U}\, dx.
\eel

Moreover by virtue of $(\mathcal{P}_1^n)$ it is not hard to see that $B_{R\mip}(\xip)\cap B_{R\mu_{j,p}}(x_{j,p})=\emptyset$ for all $i\neq j$. Hence, in particular, thanks to \eqref{appo3}
\[
\liminf_{p\to+\infty}\,\left(p\int_\Omega |\upp|^{p+1}\,dx\right)\geq \sum_{i=1}^n \left(\alpha^2_i\int_{B_R(0)} e^{U}\, dx\right).
\]
At last, since this holds for any $R>0$, we get
\[
p\int_\Omega|\na\upp|^2\,dx=p\int_\Omega |\upp|^{p+1}\,dx\geq \sum_{i=1}^n \alpha_i^2 \int_{\R^2}e^{U}\,dx
+o(1)=8\pi\sum_{i=1}^n \alpha_i^2+o(1) \ \mbox{ as }p\rightarrow +\infty.\]
\end{proof}

\

Next result shows that the solutions concentrate at a finite number of points and also establishes the existence of a maximal number of ``bubbles'' 

\begin{theorem}\label{thm:x1N}
Let $(\upp)$ be a family of solutions to \eqref{problem1} and assume that \eqref{energylimit} holds. Then there exist $k\in\N\setminus\{0\}$ and $k$ families of points $(\xip)$ in $\Omega$  $i=1,\ldots, k$ such that, after passing to a sequence, $(\mathcal{P}_1^k)$, $(\mathcal{P}_2^k)$, and $(\mathcal{P}_3^k)$ hold. Moreover $x_{1,p}=x_p^+$ and, given any family of points $x_{k+1,p}$, it is impossible to extract a new sequence from the previous one such that $(\mathcal{P}_1^{k+1})$, $(\mathcal{P}_2^{k+1})$, and $(\mathcal{P}_3^{k+1})$ hold with the sequences $(\xip)$, $i=1,\ldots,k+1$. At last, we have
\begin{equation}\label{pu_va_a_zero}
\sqrt{p}\upp\to 0\quad\textrm{ in $C^2_{loc}(\bar\Omega\setminus\mathcal{S})$ as $p\to+\infty$.}
\end{equation}
Moreover there exists $v\in C^2(\bar\Omega\setminus\mathcal{S})$ such that
\begin{equation}
\label{pu_va_a_funzione}
p\upp\to v\quad\textrm{ in $C^2_{loc}(\bar\Omega\setminus\mathcal{S})$ as $p\to+\infty$,}
\end{equation}
and $(\mathcal{P}_4^k)$ holds.
\end{theorem}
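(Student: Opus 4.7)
The plan is to build the $k$ concentration families one at a time by an iterative selection that stops as soon as $(\mathcal{P}_3^n)$ holds, and then to derive the remaining convergence statements by elliptic estimates away from $\mathcal{S}$. For the base case I would set $x_{1,p}:=x_p^+$; Theorem \ref{teo:BoundEnergia}(ii) already yields $\mu_{1,p}\to 0$ and $v_{1,p}\to U$ in $C^2_{loc}(\R^2)$, so that $(\mathcal{P}_1^1)$ is vacuous and $(\mathcal{P}_2^1)$ holds. For the inductive step, suppose $n\geq 1$ families satisfying $(\mathcal{P}_1^n)$ and $(\mathcal{P}_2^n)$ have been constructed. If $(\mathcal{P}_3^n)$ is already valid, I would set $k:=n$ and stop; otherwise I would pick a sequence $(y_p)\subset\Omega$ along which $pR_{n,p}(y_p)^2|u_p(y_p)|^{p-1}\to+\infty$ and define $x_{n+1,p}$ as a local maximum of $|u_p|$ on the scale of $R_{n,p}(y_p)$ near $y_p$. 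This forces $R_{n,p}(x_{n+1,p})/\mu_{n+1,p}=\sqrt{pR_{n,p}(x_{n+1,p})^2|u_p(x_{n+1,p})|^{p-1}}\to+\infty$, which together with the local maximality of $|u_p(x_{n+1,p})|$ provides the full separation $(\mathcal{P}_1^{n+1})$; $(\mathcal{P}_2^{n+1})$ is then obtained by the same argument as in the proof of Theorem \ref{teo:BoundEnergia}(ii) (Laplacian bounded by $1$, one-sided upper bound via Harnack, domain exhaustion $\widetilde\Omega_{n+1,p}\to\R^2$ from the separation, and the Chen--Li classification \cite{ChenLi} of the limit).

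Termination is forced by the energy bound: Lemma \ref{lemma:BoundEnergiaBassino} combined with $\alpha_i\geq 1$ from \eqref{RemarkMaxCirca1} gives $p\int_\Omega|\nabla u_p|^2\,dx\geq 8\pi n+o_p(1)$, so in view of \eqref{energylimit} the procedure must halt for some $k\leq\lfloor C/(8\pi)\rfloor$. Maximality of $k$ is then automatic: if a family $(x_{k+1,p})$ satisfied $(\mathcal{P}_1^{k+1})$ and $(\mathcal{P}_2^{k+1})$, then $\mu_{k+1,p}\to 0$ and $R_{k,p}(x_{k+1,p})/\mu_{k+1,p}\to+\infty$ would yield $pR_{k,p}(x_{k+1,p})^2|u_p(x_{k+1,p})|^{p-1}\to+\infty$, contradicting $(\mathcal{P}_3^k)$.

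To prove \eqref{pu_va_a_zero} I would fix a compact $K\subset\bar\Omega\setminus\mathcal{S}$, so that $R_{k,p}(x)\geq\delta>0$ on $K$ for $p$ large. Property $(\mathcal{P}_3^k)$ then yields $p|u_p|^{p-1}\leq C/\delta^2$ on $K$, and combined with Theorem \ref{teo:BoundEnergia}(iii) this gives $\|\Delta(\sqrt{p}u_p)\|_{L^\infty(K)}=O(1/\sqrt{p})\to 0$. Since $\sqrt{p}u_p\rightharpoonup 0$ in $H^1_0(\Omega)$ by Theorem \ref{teo:BoundEnergia}(v), a standard $W^{2,q}$-bootstrap upgrades this to convergence to zero in $C^2_{loc}(\bar\Omega\setminus\mathcal{S})$. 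For \eqref{pu_va_a_funzione} the Green's representation $pu_p(x)=\int_\Omega G(x,y)\,p|u_p|^{p-1}u_p(y)\,dy$ combined with the $L^1$-bound $p\int_\Omega|u_p|^p\,dx\leq C$ from Theorem \ref{teo:BoundEnergia}(iv) shows that $pu_p$ is uniformly bounded in $L^\infty_{loc}(\bar\Omega\setminus\mathcal{S})$; since $p|u_p|^{p-1}u_p$ is also uniformly bounded on $K$ by the previous estimate, elliptic regularity produces a subsequential limit $v\in C^2(\bar\Omega\setminus\mathcal{S})$ with $pu_p\to v$ in $C^2_{loc}(\bar\Omega\setminus\mathcal{S})$.

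Finally, $(\mathcal{P}_4^k)$ follows from a rescaling argument: for $x\in\Omega$ I would set $r:=R_{k,p}(x)/2$ and $w_p(y):=pu_p(x+ry)$ on $B_1(0)$, so that $|\Delta w_p|\leq C$ on $B_1(0)$ by $(\mathcal{P}_3^k)$ (applied with $R_{k,p}(x+ry)\geq r$) together with Theorem \ref{teo:BoundEnergia}(iii), while $\|w_p\|_{L^\infty(B_{1/2})}$ is controlled by the $L^\infty_{loc}$-bound on $pu_p$ established for \eqref{pu_va_a_funzione}; standard interior gradient estimates then yield $|\nabla w_p(0)|\leq C'$, i.e.\ $pR_{k,p}(x)|\nabla u_p(x)|\leq 2C'$. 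The hardest part will be the inductive construction: producing $x_{n+1,p}$ so as to simultaneously secure the separation $(\mathcal{P}_1^{n+1})$ (nontrivial because distinct bubbles may concentrate at the same point of $\Omega$ with different rates, as in the bubble-tower solutions of Theorem \ref{thm:GGP2}) and the convergence of the rescaling $v_{n+1,p}$ to the regular profile $U$, ruling out degeneracies arising from the interaction with the previously selected bubbles via a careful Harnack-type analysis in the spirit of the proof of Theorem \ref{teo:BoundEnergia}(ii).
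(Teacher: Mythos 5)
Your overall architecture (iterate the alternative ``either $(\mathcal{P}_3^n)$ holds or a new bubble can be extracted'', terminate via Lemma \ref{lemma:BoundEnergiaBassino}, get maximality from $(\mathcal{P}_3^k)$, and deduce \eqref{pu_va_a_zero}, \eqref{pu_va_a_funzione} from $(\mathcal{P}_3^k)$ plus Theorem \ref{teo:BoundEnergia}) is exactly the paper's. Two points differ. First, the selection of $x_{n+1,p}$: the paper takes the \emph{global} maximizer of $x\mapsto pR_{n,p}(x)^2|u_p(x)|^{p-1}$ over $\Omega$, which immediately yields both the separation \eqref{claimxk+1} and the key comparison $|u_p(x_p)|^{p-1}\leq(1+o(1))|u_p(x_{n+1,p})|^{p-1}$ on balls $B_{R\mu_{n+1,p}}(x_{n+1,p})$ for \emph{every} fixed $R$; your ``local maximum of $|u_p|$ at scale $R_{n,p}(y_p)$'' can be made to work (and even simplifies the sign-changing case, since $|u_p|\leq|u_p(x_{n+1,p})|$ gives both the Laplacian bound and $v_{n+1,p}\leq0$ at once), but you must rule out the maximizer sitting at the edge of the selection ball, otherwise $B_{R\mu_{n+1,p}}(x_{n+1,p})$ eventually leaves the region where the maximality is available; this is precisely what the weighted global maximization is designed to avoid.

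The genuine gap is in your proof of $(\mathcal{P}_4^k)$. The estimate must hold for \emph{all} $x\in\Omega$, in particular for $x$ at distance $O(\mu_{i,p})$ from $x_{i,p}$, and there the quantity $\|w_p\|_{L^\infty(B_{1/2})}=\sup_{B_{r/2}(x)}p|u_p|$ is \emph{not} bounded: by $(\mathcal{P}_2^k)$ one has $p|u_p|\sim p|u_p(x_{i,p})|\to+\infty$ on that ball, since the $L^\infty_{loc}$ bound on $pu_p$ you invoke is only valid on compact subsets of $\bar\Omega\setminus\mathcal{S}$ (and even there the constant degenerates like $|\log R_{k,p}(x)|$ as $x$ approaches $\mathcal S$, because the limit $v$ has logarithmic singularities). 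So the interior gradient estimate $|\nabla w_p(0)|\leq C(\|\Delta w_p\|_{L^\infty(B_1)}+\|w_p\|_{L^\infty(B_1)})$ does not close; replacing $\|w_p\|_{L^\infty}$ by the oscillation of $pu_p$ over $B_r(x)$ does not help either, since a uniform oscillation bound at scale $R_{k,p}(x)$ is essentially equivalent to $(\mathcal{P}_4^k)$ itself. The paper circumvents this entirely by writing $p\nabla u_p(x)=p\int_\Omega\nabla_xG(x,y)|u_p(y)|^{p-1}u_p(y)\,dy$, using $|\nabla_xG(x,y)|\leq C/|x-y|$, and splitting each region $\Omega_{i,p}$ into $B_{|x-x_{i,p}|/2}(x_{i,p})$ and its complement, where the two ingredients $(\mathcal{P}_3^k)$ and the $L^1$ bound $p\int_\Omega|u_p|^p\,dx\leq C$ from \eqref{boundEnergiap} respectively control the integrand; you should replace your rescaling argument by this (or an equivalent potential-theoretic) estimate.
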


\begin{proof}

This result is mainly contained in \cite{DeMarchisIanniPacellaJEMS}. The proof is inspired by the one  \cite[Proposition 1]{Druet} (see also \cite{SantraWei}), but we have to deal with an extra-difficulty because we allow the solutions $\upp$ to be sign-changing. We divide the proof in several steps and all the claims are up to a subsequence.

\

{\sl STEP 1. We show that there exists a family $(x_{1,p})$ of points in $\Omega$ such that, after passing to a sequence $(\mathcal{P}^1_2)$ holds.\\}

\

We let  $x_p^+$ be a point in $\Omega$ where $|\upp|$ achieves its maximum. The proof then follows taking $x_{1,p}:=x_p^+$  and using the results in Theorem \ref{teo:BoundEnergia}-(ii).

\

\

{\sl STEP 2. We assume that $(\mathcal{P}_1^n)$ and $(\mathcal{P}_2^n)$ hold for some $n\in\N\setminus\{0\}$. Then we show that either $(\mathcal{P}_1^{n+1})$ and $(\mathcal{P}_2^{n+1})$ hold or $(\mathcal{P}_3^n)$ holds, namely there exists $C>0$ such that
$$
p R_{n,p}(x)^2 |\upp(x)|^{p-1}\leq C
$$
for all $x\in\Omega$, with $R_{n,p}$  defined as in \eqref{RNp}.\\
}

\

Let $n\in\N\setminus\{0\}$ and assume that $(\mathcal{P}_1^n)$ and $(\mathcal{P}_2^n)$ hold while
\bel\label{Pknonvale}
\sup_{x\in\Omega}\left(p R_{n,p}(x)^2 |\upp(x)|^{p-1}\right)\to+\infty\quad\textrm{as $p\to+\infty$}.
\eel
We will prove that $(\mathcal{P}_1^{n+1})$ and $(\mathcal{P}_2^{n+1})$ hold.\\

We let $x_{n+1,p}\in\bar\Omega$ be such that
\bel\label{xk+1}
p R_{n,p}(x_{n+1,p})^2 |\upp(x_{n+1,p})|^{p-1}=\sup_{x\in\Omega}\left(p R_{n,p}(x)^2 |\upp(x)|^{p-1}\right).
\eel
Clearly $x_{n+1,p}\in\Omega$ because $\upp=0$ on $\partial \Omega$.
By \eqref{xk+1} and since $\Omega$ is bounded it is clear that
$$
p|\upp(x_{n+1,p})|^{p-1}\to+\infty\quad\textrm{as $p\to+\infty.$}
$$
We claim that
\bel\label{claimxk+1}
\fr{|\xip-x_{n+1,p}|}{\mu_{i,p}}\to+\infty\quad\textrm{as $p\to+\infty$}
\eel
for all $i=1,\ldots,n$ and $\mu_{i,p}$ as in \eqref{mip}. In fact, assuming by contradiction that there exists $i\in\{1,\ldots,n\}$ such that $|\xip-x_{n+1,p}|/\mip\to R$ as $p\to+\infty$ for some $R\geq0$, thanks to $(\mathcal{P}_2^n)$, we get
\[
\lim_{p\to+\infty}p|\xip-x_{n+1,p}|^2 |\upp(x_{n+1,p})|^{p-1}= R^2\left(\fr{1}{1+\fr18R^2}\right)^2<+\infty,
\]
against \eqref{xk+1}. 
Setting
\bel\label{mk+1}
(\mu_{n+1,p})^{-2}:=p |\upp(x_{n+1,p})|^{p-1},
\eel
by \eqref{Pknonvale} and \eqref{xk+1} we deduce that
\bel\label{Rkmk+1}
\fr{R_{n,p}(x_{n+1,p})}{\mu_{n+1,p}}\to+\infty\quad\textrm{as $p\to+\infty$.}
\eel
Then \eqref{mk+1}, \eqref{Rkmk+1} and $(\mathcal{P}_1^n)$ imply that $(\mathcal{P}_1^{n+1})$ holds with the added sequence $(x_{n+1,p})$.\\

Next we show that also $(\mathcal{P}_2^{n+1})$ holds with the added sequence $(x_{n+1,p})$. Let us define the scaled domain
\[
\widetilde{\Omega}_{n+1,p}=\{x\in\R^2\,:\, x_{n+1,p}+\mu_{n+1,p}x\in\Omega\},
\]
and, for $x\in\widetilde{\Omega}_{n+1,p}$, the rescaled functions
\bel\label{vk+1p}
v_{n+1,p}(x)=\fr{p}{\upp(x_{n+1,p})}(\upp(x_{n+1,p}+\mu_{n+1,p}x)-\upp(x_{n+1,p})),
\eel
which, by \eqref{problem1}, satisfy
\bel\label{lapvk+1p1}
-\lap v_{n+1,p}(x)=\fr{|\upp(x_{n+1,p}+\mu_{n+1,p}x)|^{p-1}\upp(x_{n+1,p}+\mu_{n+1,p}x)}{|\upp(x_{n+1,p})|^{p-1}\upp(x_{n+1,p})}\qquad\textrm{in $\widetilde{\Omega}_{n+1,p}$},
\eel
or equivalently
\bel\label{lapvk+1p2}
-\lap v_{n+1,p}(x)=\left|1+\fr{v_{n+1,p}(x)}{p}\right|^{p-1}\left(1+\fr{v_{n+1,p}(x)}{p}\right)\qquad\textrm{in $\widetilde{\Omega}_{n+1,p}$}.
\eel
Fix $R>0$ and let $(z_{p})$ be any point in $\widetilde{\Omega}_{n+1,p}\cap B_R(0)$, whose corresponding points in $\Omega$ is
$$
x_p=x_{n+1,p}+\mu_{n+1,p} z_p.
$$
Thanks to the definition of $x_{n+1,p}$ we have
\begin{equation}\label{r}
p R_{n,p}(x_p)^2|\upp(x_p)|^{p-1}\leq p R_{n,p}(x_{n+1,p})^2|\upp(x_{n+1,p})|^{p-1}.
\end{equation}
Since $|x_p-x_{n+1,p}|\leq R\mu_{n+1,p}$ we have
\begin{eqnarray*}
R_{n,p}(x_p)&\geq&\min_{i=1,\ldots,n}|x_{n+1,p}-\xip|-|x_p-x_{n+1,p}|\\
&\geq&R_{n,p}(x_{n+1,p})-R\mu_{n+1,p}
\end{eqnarray*}
and, analogously,
$$
R_{n,p}(x_p)\leq R_{n,p}(x_{n+1,p})+R\mu_{n+1,p}.
$$
Thus, by \eqref{Rkmk+1} we get
$$
R_{n,p}(x_p)=(1+o(1))R_{n,p}(x_{n+1,p})
$$
and in turn from \eqref{r}
\bel\label{1+o1}
|\upp(x_p)|^{p-1}\leq(1+o(1))|\upp(x_{n+1,p})|^{p-1}.
\eel

In the following we show that for any compact subset $K$ of $\R^2$
\begin{equation}\label{boundLaplaciano}
-1+o(1)\leq-\lap v_{n+1,p}\leq 1+o(1)
\qquad\textrm{in $\widetilde{\Omega}_{n+1,p}\cap K$}
\end{equation}
and
\begin{equation}\label{vNeg}
\limsup_{p\to+\infty}\sup_{\widetilde \Omega_{n+1,p}\cap K}v_{n+1,p}\leq0.
\end{equation}
In order to prove \eqref{boundLaplaciano} and \eqref{vNeg} we will distinguish $2$ cases.
\begin{itemize}
\item[$(i)$] \emph{Assume that
\begin{equation}
\label{ahah}
\frac{\upp(x_p)}{\upp(x_{n+1,p})}=\frac{|\upp(x_p)|}{|\upp(x_{n+1,p})|}.
\end{equation}}
Then by \eqref{1+o1} we get $|\upp(x_p)|^p\leq(1+o(1))|\upp(x_{n+1,p})|^p$ and so by \eqref{lapvk+1p1}
\begin{equation}\label{firstHalfLapl}
(0\leq)-\lap v_{n+1,p}(z_p)\overset{\eqref{ahah}}{=}\frac{|\upp(x_p)|^p}{|\upp(x_{n+1,p})|^p}
\leq 1+o(1).
\end{equation}
Moreover, since \eqref{lapvk+1p2} implies $-\lap v_{n+1,p}(z_p)=e^{v_{n+1,p}(z_p)}+o(1)$, we get
\begin{equation}
\label{arghlll}
\limsup_{p\to+\infty}v_{n+1,p}(z_p)\leq 0.
\end{equation}
\item[$(ii)$] \emph{Assume that
\begin{equation}
\frac{\upp(x_p)}{\upp(x_{n+1,p})}=-\frac{|\upp(x_p)|}{|\upp(x_{n+1,p})|}.
\end{equation}}
Then by the expression of $v_{n+1,p}$ necessarily 
\begin{equation}
\label{arghl}
v_{n+1,p}(z_p)\leq 0,
\end{equation}
moreover by \eqref{1+o1} 
\begin{equation}\label{firstHalfLap2}
0\geq-\lap v_{n+1,p}(z_p)=-\fr{|\upp(x_p)|^{p}}{|\upp(x_{n+1,p})|^p}\geq-1+o(1).
\end{equation}
\end{itemize}
\eqref{firstHalfLapl} and \eqref{firstHalfLap2} imply \eqref{boundLaplaciano}, while \eqref{arghl}, \eqref{arghlll} 
and the arbitrariness of $z_p$ give \eqref{vNeg}.
\\

Using \eqref{boundLaplaciano} and \eqref{vNeg} we can prove, similarly as in the proof of Theorem \ref{teo:BoundEnergia}-(ii), that
\begin{equation}\widetilde{\Omega}_{n+1,p}\rightarrow\mathbb R^2\ \mbox{ as }p\rightarrow +\infty.
\end{equation}
%
%
Then for any $R>0$, $B_R(0)\subset\widetilde{\Omega}_{n+1,p}$ for $p$ large enough and $v_{n+1,p}$ are  functions with uniformly bounded laplacian in $B_R(0)$ and with $v_{n+1,p}(0)=0$. So, by Harnack inequality, $v_{n+1,p}$ is uniformly bounded in $B_R(0)$ for all $R>0$ and then by standard elliptic regularity $v_{n+1,p}\to U$ in $C^2_{loc}(\R^2)$ as $p\to+\infty$ with $U\in C^2(\R^2)$,  $U(0)=0$ and, by \eqref{vNeg}, $U\leq0$. Passing to the limit in \eqref{lapvk+1p2} we get that $U$ is a solution of  $-\lap U=e^{U}$ in $\R^2$. hen by Fatou's Lemma, as in the proof of Theorem \ref{teo:BoundEnergia}-(ii), we get that $e^{U}\in L^1(\R^2)$ and so by the classification result in \cite{ChenLi} we have the explicit expression of $U$.

This proves that $(\mathcal{P}_2^{n+1})$ holds with the added points $(x_{n+1,p})$, thus {\sl STEP 2.} is proved.

\

\

{\sl STEP 3. We complete the proof of Theorem \ref{thm:x1N}.}

\

From {\sl STEP 1.} we have that $(\mathcal{P}_1^1)$ and $(\mathcal{P}_2^1)$ hold. Then, by {\sl STEP 2.}, either $(\mathcal{P}_1^2)$ and $(\mathcal{P}_2^2)$ hold or
$(\mathcal{P}_3^1)$ holds. In the last case the assertion is proved with $k=1$. In the first case we go on and proceed with the same alternative until we reach a number $k\in\N\setminus\{0\}$ for which $(\mathcal{P}_1^{k})$, $(\mathcal{P}_2^{k})$ and $(\mathcal{P}_3^{k})$ hold up to a sequence. Note that this is possible because the solutions $u_p$ satisfy  \eqref{energylimit} and Lemma \ref{lemma:BoundEnergiaBassino} holds and hence the maximal number $k$ of families of points for which
$(\mathcal{P}_1^{k})$, $(\mathcal{P}_2^{k})$ hold must be finite.

\

Moreover, given any other family of points $x_{k+1,p}$, it is impossible to extract a new sequence from it such that $(\mathcal{P}_1^{k+1})$, $(\mathcal{P}_2^{k+1})$ and $(\mathcal{P}_3^{k+1})$ hold together with the points $(x_{i,p})_{i=1,..,k+1}$. Indeed if $(\mathcal{P}_1^{k+1})$ was verified then
\[\frac{|x_{k+1,p}-\xip|}{\mu_{k+1,p}}\to+\infty\quad \mbox{ as } p\to+\infty,\ \mbox{ for any }i\in\{1,\ldots,k\},\]
but this would contradict $(\mathcal{P}_3^k)$.

\

Finally the proofs of \eqref{pu_va_a_zero} and \eqref{pu_va_a_funzione} are a direct consequence of $(\mathcal{P}^k_3)$.
Indeed if $K$ is a compact subset of $\bar\Omega\setminus\mathcal S$ by $(\mathcal{P}^k_3)$ we have that there exists $C_K>0$ such that
$$
p|\upp(x)|^{p-1}\leq C_K\qquad\textrm{for all $x\in K$.}
$$
Then by  \eqref{problem1} $\|\lap(\sqrt{p} \upp)\|_{L^\infty(K)}\leq C_K\frac{\|u_p\|_{L^\infty(K)}}{\sqrt{p}}\to0$, as $p\to+\infty$. Hence standard elliptic theory shows that
 $\sqrt{p}u_p\to w$ in $C^2(K)$, for some $w$. But by  Theorem \ref{teo:BoundEnergia} we know that
$\sqrt{p}u_p\rightharpoonup 0$, so $w=0$ and \eqref{pu_va_a_zero} is proved. 
Iterating we then have
$\|\lap(p \upp)\|_{L^\infty(K)}\leq C_K\|u_p\|_{L^\infty(K)}\to0$, as $p\to+\infty$ by \eqref{pu_va_a_zero}.
And so by standard elliptic theory we have that $pu_p\rightarrow v$ in $C^2(K)$, for some $v$. The arbitrariness of the compact set $K$  
ends the proof of \eqref{pu_va_a_funzione}.

\

It remains to prove $(\mathcal{P}_4^k)$. Green's representation gives
\begin{eqnarray}
\label{gradientePrima}
\nonumber
p|\nabla u_p(x)|&=& p\left|\int_\Omega \nabla G(x,y) u_p(y)^p dy\right|\leq p\int_\Omega |\nabla G(x,y)| |u_p(y)|^p dy
\\
&\leq & C p \int_\Omega \frac{|\upp(y)|^p}{|x-y|} dy,
\end{eqnarray}
where $G$ is the Green function of $-\Delta$ in $\Omega$ with Dirichlet boundary conditions, and in the last estimate we have used that $|\nabla_x G(x,y)|\leq \frac{C}{|x-y|}\quad\forall x,y\in\Omega,\ x\neq y$ (see for instance \cite{DallAcquaSweers}).
Let $R_{k,p}(x)=\min_{i=1,\ldots,k}|x-\xip|$ and $\Omega_{i,p}=\{x\in\Omega\,:\,|x-\xip|=R_{i,p}(x)\}$, $i=1,\ldots,k$. We then have 
\begin{eqnarray*}
p\,\int_{\Omega_{i,p}}|x-y|^{-1}|\upp(y)|^p\,dy&=&p\,\int_{\Omega_{i,p}\cap B_{\frac{|x-x_{i,p}|}{2}}(x_{i,p})}|x-y|^{-1}|\upp(y)|^p\,dy\\
&&+\ p\,\int_{\Omega_{i,p}\setminus B_{\frac{|x-x_{i,p}|}{2}}(x_{i,p})}|x-y|^{-1}|\upp
(y)|^p\,dy.
\end{eqnarray*}
Note that by \eqref{boundSoluzio} and $(\mathcal P^k_3)$ for $y\in\Omega_{i,p}\setminus B_{\frac{|x-x_{i,p}|}{2}}(x_{i,p})$ we have 
\[
\frac{p\,|\upp(y)|^p}{|x-y|}\leq C\,\frac{p\,|\upp(y)|^{p-1}}{|x-y|}\leq\frac{C}{|x-y||y-x_{i,p}|^2}\leq\frac{C}{|x-y||x-x_{i,p}|^2},
\]
and hence
\begin{eqnarray*}
\int_{\Omega_{i,p}\setminus B_{\frac{|x-x_{i,p}|}{2}}(x_{i,p})}\frac{p\,|\upp(y)|^p}{|x-y|}\,dy&\leq&\frac{1}{|x-x_{i,p}|^2}\int_{|x-y|\leq |x-x_{i,p}|}\frac{C}{|x-y|}\,dy\,+\,\frac{1}{|x-x_{i,p}|}\int_{\Omega_{i,p}}p\,|\upp(y)|^p dy\\
& \overset{\eqref{boundEnergiap}}{\leq}& \frac{C}{|x-x_{i,p}|}.
\end{eqnarray*}
For $y\in \Omega_{i,p}\cap B_{\frac{|x-x_{i,p}|}{2}}(x_{i,p})$, $|x-y|\geq|x-\xip|-|y-\xip|\geq|x-\xip|/2$, and hence by \eqref{boundSoluzio} and \eqref{boundEnergiap} we get
\[
p\,\int_{\Omega_{i,p}\cap B_{\frac{|x-x_{i,p}|}{2}}(x_{i,p})}\frac{|\upp(y)|^p}{|x-y|}\,dy\leq\frac{C}{|x-\xip|},\quad\mbox{for $i=1,\ldots,k$}
\]
so that $(\mathcal{P}_4^k)$ is proved.
\end{proof}

\

\

In the rest of this section we derive some consequences of Theorem \ref{thm:x1N}.

\

\begin{remark}\label{rem:nonvedobordo}
Under the assumptions of Theorem \ref{thm:x1N} we have
$$
\fr{dist(\xip,\partial\Omega)}{\mip}\underset{p\to+\infty}{\longrightarrow}+\infty\qquad\textrm{for all $i\in\{1,\ldots,k\}$}.
$$
\end{remark}
\begin{corollary}\label{cor:nonvedoNL}
Under the assumptions of Theorem \ref{thm:x1N} if $u_p$ is sign-changing it follows that
$$
\fr{dist(\xip,NL_p)}{\mip}\underset{p\to+\infty}{\longrightarrow}+\infty\quad\textrm{ for all $i\in\{1,\ldots,k\}$}
$$
where $NL_p$ denotes the  nodal line of $\upp$.\\
As a consequence, for any $i\in\{1,\ldots,k\}$, letting $\mathcal{N}_{i,p}\subset\Omega$ be the nodal domain of $u_p$ containing $x_{i,p}$ and setting $u_p^i:=u_p\chi_{\mathcal{N}_{i,p}}$ ($\chi_A$ is the characteristic function of the set $A$),
then the scaling of $u_p^i$ around $x_{i,p}$:
\[
z_{i,p}(x):=\fr{p}{\upp(\xip)}(\upp^i(\xip+\mip x)-\upp(\xip)),
\]
defined on $\widetilde{\mathcal{N}}_{i,p}:=\frac{\mathcal{N}_{i,p}-x_{i,p}}{\mu_{i,p}}$, converges to $U$ in $C^2_{loc}(\mathbb R^2)$, where $U$ is the same function defined in $(\mathcal{P}_2^k)$.
\end{corollary}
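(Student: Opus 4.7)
My plan is to derive both conclusions directly from the local uniform convergence $v_{i,p}\to U$ guaranteed by $(\mathcal P_2^k)$; the crucial observation is that if $u_p$ were to vanish at a point within bounded rescaled distance of $x_{i,p}$, the definition of $v_{i,p}$ would force it to take the value $-p\to-\infty$ there, which is incompatible with its convergence to the bounded limit $U$ on compact sets.

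To establish the distance estimate I would argue by contradiction: suppose that along a subsequence there exist $y_p\in NL_p$ with $|y_p-x_{i,p}|\leq R\mu_{i,p}$ for a fixed $R>0$. Setting $\tilde y_p=(y_p-x_{i,p})/\mu_{i,p}\in\overline{B_R(0)}$ and using $u_p(y_p)=0$, the explicit formula for $v_{i,p}$ gives
\[
v_{i,p}(\tilde y_p)=\frac{p}{u_p(x_{i,p})}\bigl(0-u_p(x_{i,p})\bigr)=-p .
\]
On the other hand $(\mathcal P_2^k)$ yields uniform convergence $v_{i,p}\to U$ on the compact set $\overline{B_R(0)}$, so $\{v_{i,p}\}$ is uniformly bounded there, contradicting $v_{i,p}(\tilde y_p)=-p$. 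In order to know that $y_p$ really lies in $\Omega$ (so that the pointwise vanishing is meaningful), I would invoke Remark \ref{rem:nonvedobordo}, which tells us that $\partial\Omega$ is itself far from $x_{i,p}$ on the scale $\mu_{i,p}$ and hence cannot interfere with the argument.

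The convergence of the rescaled restrictions $z_{i,p}$ then follows almost for free. Fix $R>0$; by the first part, for $p$ large enough the ball $B_{R\mu_{i,p}}(x_{i,p})$ is disjoint from $NL_p$ and hence entirely contained in the single nodal domain $\mathcal N_{i,p}$, so $B_R(0)\subset\widetilde{\mathcal N}_{i,p}$. On this ball one has $u_p^i=u_p$ and consequently $z_{i,p}\equiv v_{i,p}$; applying $(\mathcal P_2^k)$ on $\overline{B_R(0)}$ and letting $R\to+\infty$ via a diagonal extraction gives $z_{i,p}\to U$ in $C^2_{loc}(\mathbb{R}^2)$. I do not anticipate any serious obstacle here: the whole corollary is essentially a one-line consequence of the convergence already built into $(\mathcal P_2^k)$, and the only bookkeeping concerns ruling out interference from $\partial\Omega$, which is precisely what Remark \ref{rem:nonvedobordo} provides.
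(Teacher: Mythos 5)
Your proof is correct and follows essentially the same argument as the paper: a contradiction between $v_{i,p}=-p$ at rescaled nodal points within bounded distance and the finite limit furnished by $(\mathcal{P}_2^k)$. The only difference is that you also spell out the ``as a consequence'' part about $z_{i,p}$, which the paper leaves implicit, and your bookkeeping there (disjointness from $NL_p$ plus Remark \ref{rem:nonvedobordo} giving $B_{R\mu_{i,p}}(x_{i,p})\subset\mathcal{N}_{i,p}$) is sound.
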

\begin{proof}
Let us suppose by contradiction that
$$
\fr{dist(\xip,NL_p)}{\mip}\underset{p\to+\infty}{\longrightarrow}\ell\geq0,
$$
then there exist $y_p\in NL_p$ such that $\fr{|\xip-y_p|}{\mip}\to\ell$ as $p\rightarrow +\infty$.
Setting
$$
v_{i,p}(x):=\fr{p}{\upp(\xip)}(\upp(\xip+\mip x)-\upp(\xip)),
$$
on the one hand
$$
v_{i,p}(\fr{y_p-\xip}{\mip})=-p\underset{p\to+\infty}{\longrightarrow}-\infty,
$$
on the other hand by $(\mathcal{P}_2^k)$ and up to subsequences
$$
v_{i,p}(\fr{y_p-\xip}{\mip})\underset{p\to+\infty}{\longrightarrow}U(x_{\infty})>-\infty,
$$
where $x_\infty=\lim_{p\rightarrow +\infty} \fr{y_p-\xip}{\mip}\in\R^2$ and so $|x_\infty|=\ell$. Thus we have obtained a contradiction which proves the assertion.
\end{proof}

\

For a family of points $(x_p)_p\subset\Omega$ we denote by $\mu(x_p)$ the numbers defined by
\begin{equation} 
\label{defmup}
\left[\mu (x_p)\right]^{-2}:=p |u_p(x_p)|^{p-1}.
\end{equation}

\begin{proposition}\label{lemma:rapportoMuBounded} Let $(x_p)_p\subset\Omega$ be a family of points such that  $p |u_p(x_p)|^{p-1}\rightarrow +\infty$ and let $\mu (x_p)$ be as in \eqref{defmup}.
By $(\mathcal{P}_3^{k})$, up to a sequence, $R_{k,p}(x_p)=|x_{i,p}-x_p|$, for a certain  $i\in\{1,\dots,k\}$. Then
\[
\limsup_{p\rightarrow +\infty}\frac{\mu_{i,p}}{\mu (x_p)}\leq  1.
\]
\end{proposition}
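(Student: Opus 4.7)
The plan rests on a single algebraic identity: since $\mu_{i,p}^{-2}=p|u_p(x_{i,p})|^{p-1}$ and $\mu(x_p)^{-2}=p|u_p(x_p)|^{p-1}$, we have
\[
\frac{\mu_{i,p}}{\mu(x_p)}=\left(\frac{|u_p(x_p)|}{|u_p(x_{i,p})|}\right)^{(p-1)/2}.
\]
So proving the claim is equivalent to showing that, asymptotically, $|u_p(x_p)|$ is not larger than $|u_p(x_{i,p})|$ to leading order in $p$.

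The first step is to apply $(\mathcal{P}_3^k)$ at $x=x_p$, which gives $p|x_p-x_{i,p}|^2|u_p(x_p)|^{p-1}\leq C$, that is $|x_p-x_{i,p}|/\mu(x_p)\leq\sqrt{C}$. I would then split into two cases according to the behaviour of $|x_p-x_{i,p}|/\mu_{i,p}$ along a subsequence. If this quantity tends to $+\infty$, then writing
\[
\frac{\mu_{i,p}}{\mu(x_p)}=\frac{\mu_{i,p}}{|x_p-x_{i,p}|}\cdot\frac{|x_p-x_{i,p}|}{\mu(x_p)}
\]
the first factor tends to $0$ and the second is uniformly bounded by $\sqrt{C}$, so the ratio tends to $0$ and the conclusion holds trivially.

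In the remaining case, up to a subsequence $(x_p-x_{i,p})/\mu_{i,p}\to y_\infty\in\mathbb{R}^2$. By $(\mathcal{P}_2^k)$ and the $C^2_{loc}(\mathbb{R}^2)$ convergence $v_{i,p}\to U$, one obtains $v_{i,p}\bigl((x_p-x_{i,p})/\mu_{i,p}\bigr)\to U(y_\infty)$. Unwinding the definition of $v_{i,p}$ in \eqref{vip} this translates to
\[
p\left(\frac{u_p(x_p)}{u_p(x_{i,p})}-1\right)\longrightarrow U(y_\infty),
\]
so $u_p(x_p)/u_p(x_{i,p})=1+(U(y_\infty)+o(1))/p\to 1$. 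In particular, for $p$ large $u_p(x_p)$ and $u_p(x_{i,p})$ share the same sign, hence $|u_p(x_p)|/|u_p(x_{i,p})|=1+(U(y_\infty)+o(1))/p$. Raising to the power $(p-1)/2$ and using $U\leq 0$ yields
\[
\frac{\mu_{i,p}}{\mu(x_p)}\longrightarrow e^{U(y_\infty)/2}\leq 1,
\]
which closes the argument.

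There is no serious obstacle here: the proof is essentially bookkeeping around the identity above, plus a clean dichotomy on $|x_p-x_{i,p}|/\mu_{i,p}$. The only subtlety worth highlighting is the sign issue when passing from $u_p$-ratios to $|u_p|$-ratios, which is automatic once one observes $u_p(x_p)/u_p(x_{i,p})\to 1$; without the sign-changing nature of $u_p$ this step would be vacuous, but here it deserves explicit mention since the rest of Section \ref{SectionGeneralAnalysis} deals with nodal solutions as well.
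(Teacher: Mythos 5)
Your proof is correct and uses essentially the same ingredients as the paper's: the identity expressing $\mu_{i,p}/\mu(x_p)$ as a power of the ratio $|u_p(x_p)|/|u_p(x_{i,p})|$, the bound $|x_p-x_{i,p}|/\mu(x_p)\leq\sqrt{C}$ from $(\mathcal{P}_3^k)$, and the $C^2_{loc}$ convergence $v_{i,p}\to U$ with $U\leq 0$. The only difference is organizational --- you run a direct dichotomy on $|x_p-x_{i,p}|/\mu_{i,p}$ where the paper argues twice by contradiction (first boundedness, then $\limsup\leq 1$) --- and your version is in fact slightly more careful, since the paper writes the exponent as $p-1$ where it should be $(p-1)/2$ (harmless, as only the sign of $U$ at the limit point matters) and you make the sign-agreement step explicit.
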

\begin{proof}
To shorten the notation let us denote $\mu (x_p)$ simply by $\mu_p$. Let us start by proving that $\frac{\mu_{i,p}}{\mu_p}$ is bounded. So we assume by contradiction that there exists a sequence $p_n\rightarrow +\infty$, as  $n\rightarrow +\infty$ such that
\begin{equation}\label{ipotAssurdo2}
\frac{\mu_{i,p_n}}{\mu_{p_n}}\rightarrow +\infty \quad \mbox{ as } n\rightarrow +\infty.
\end{equation}
By $(\mathcal{P}_3^{k})$ and \eqref{ipotAssurdo2} we then have
\[
\frac{|x_{p_n}-x_{i,p_n}|}{\mu_{i, p_n}}=\frac{|x_{p_n}-x_{i,p_n}|}{\mu_{ p_n}}\frac{\mu_{p_n}}{\mu_{i,p_n}}\rightarrow 0 \quad \mbox{ as } n\rightarrow +\infty,
\]
so that by $(\mathcal{P}_2^{k})$
\[
v_{i,p_n}\left(\frac{x_{p_n}-x_{i,p_n}}{\mu_{i, p_n}}\right)\rightarrow U(0)= 0\quad \mbox{ as } n\rightarrow +\infty.
\]
As a consequence
\[
\frac{\mu_{i,p_n}}{\mu_{p_n}}=\left(\frac{u_{p_n}(x_{p_n})}{u_{p_n}(x_{i,p_n})}\right)^{p_n-1}=\left(1+\frac{v_{i,p_n}\left(\frac{x_{p_n}-x_{i,p_n}}{\mu_{i,p_n}} \right)}{p_n}\right)^{p_n-1}\rightarrow e^{U(0)}=1 \quad \mbox{ as } n\rightarrow +\infty,
\]
which contradicts with \eqref{ipotAssurdo2}. Hence we have proved that $\frac{\mu_{i,p}}{\mu_p}$ is bounded.
\\
Next we show that $\tfrac\mip{\mu_p}\leq 1.$ Assume by contradiction that there exists $\ell >1$ and a sequence $p_n\rightarrow +\infty$ as $n\rightarrow +\infty$ such that
\begin{equation}\label{ipotAssurdo3}
\frac{\mu_{i,p_n}}{\mu_{p_n}}\rightarrow \ell \quad \mbox{ as } n\rightarrow +\infty.
\end{equation}
By $(\mathcal{P}_3^{k})$ and \eqref{ipotAssurdo3} we then have
\[
\frac{|x_{p_n}-x_{i,p_n}|}{\mu_{i, p_n}}=\frac{|x_{p_n}-x_{i,p_n}|}{\mu_{ p_n}}\frac{\mu_{p_n}}{\mu_{i,p_n}}\leq \frac{2\sqrt C}{\ell}
\]
for $n$ large, so that by $(\mathcal{P}_2^{k})$ there exists $x_\infty\in\mathbb R^2$, $|x_\infty|\leq \frac{2\sqrt C}{\ell}$ such that, up to a subsequence
\[
v_{i,p_n}\left(\frac{x_{p_n}-x_{i,p_n}}{\mu_{i, p_n}}\right)\rightarrow U(x_\infty)\leq 0 \quad \mbox{ as } n\rightarrow +\infty.
\]
As a consequence
\[
\frac{\mu_{i,p_n}}{\mu_{p_n}}=\left(\frac{u_{p_n}(x_{p_n})}{u_{p_n}(x_{i,p_n})}\right)^{p_n-1}=\left(1+\frac{v_{i,p_n}\left(\frac{x_{p_n}-x_{i,p_n}}{\mu_{i,p_n}} \right)}{p_n}\right)^{p_n-1}\rightarrow e^{U(x_\infty)}\quad \mbox{ as } n\rightarrow +\infty.
\]
By \eqref{ipotAssurdo3} and the assumption $\ell >1$ we deduce
\[U(x_{\infty})=\log \ell +o_n(1)>0\]
reaching a contradiction.
\end{proof}

\begin{proposition}\label{lemma:rappMuZero}
Let $x_p$ and $x_{i,p}$ be as in the statement of Proposition \ref{lemma:rapportoMuBounded}.
If \begin{equation}\label{ConditionNonVedo}\tfrac{|x_p-\xip|}{\mip}\to+\infty \quad \mbox{ as } p\rightarrow +\infty,\end{equation}
then \[\tfrac{\mip}{\mu (x_p)}\to 0 \quad \mbox{ as } p\rightarrow +\infty,\]
where $\mu (x_p)$ is defined in \eqref{defmup}.
\end{proposition}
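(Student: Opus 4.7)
The plan is to argue by contradiction, combining the already-established bound $\limsup_{p\to+\infty}\mu_{i,p}/\mu(x_p)\leq 1$ from Proposition \ref{lemma:rapportoMuBounded} with the $(\mathcal{P}_3^k)$-estimate applied directly at the point $x_p$. The key observation is that $(\mathcal{P}_3^k)$ gives a uniform upper bound on $|x_p-x_{i,p}|/\mu(x_p)$, which, when combined with the non-degeneracy of $\mu_{i,p}/\mu(x_p)$, would force $|x_p-x_{i,p}|/\mu_{i,p}$ to stay bounded, contradicting the new hypothesis \eqref{ConditionNonVedo}.

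More precisely, I would proceed as follows. First, evaluate $(\mathcal{P}_3^k)$ at $x=x_p$: since $R_{k,p}(x_p)=|x_p-x_{i,p}|$ by assumption, and $[\mu(x_p)]^{-2}=p|u_p(x_p)|^{p-1}$, one immediately obtains
\[
\frac{|x_p-x_{i,p}|^2}{\mu(x_p)^2}\;=\;pR_{k,p}(x_p)^2|u_p(x_p)|^{p-1}\;\leq\;C,
\]
so that $|x_p-x_{i,p}|/\mu(x_p)\leq\sqrt{C}$ for all $p$ large. Then suppose, for contradiction, that $\mu_{i,p}/\mu(x_p)$ does not tend to $0$; up to a subsequence there exists $\ell\in(0,1]$ (the upper bound coming from Proposition \ref{lemma:rapportoMuBounded}) such that $\mu_{i,p}/\mu(x_p)\to\ell$, and in particular $\mu(x_p)/\mu_{i,p}\to 1/\ell<+\infty$.

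Finally, the factorization
\[
\frac{|x_p-x_{i,p}|}{\mu_{i,p}}\;=\;\frac{|x_p-x_{i,p}|}{\mu(x_p)}\cdot\frac{\mu(x_p)}{\mu_{i,p}}\;\leq\;\sqrt{C}\cdot\frac{\mu(x_p)}{\mu_{i,p}}
\]
produces a uniformly bounded right-hand side along the chosen subsequence, contradicting the assumption \eqref{ConditionNonVedo} that $|x_p-x_{i,p}|/\mu_{i,p}\to+\infty$. Hence necessarily $\mu_{i,p}/\mu(x_p)\to 0$.

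There is no real obstacle here: the proposition is essentially a direct consequence of the maximal-choice property $(\mathcal{P}_3^k)$ together with the previous bound. The only step worth being careful about is ensuring that in Proposition \ref{lemma:rapportoMuBounded} the $\limsup$ bound is used only to rule out the degenerate case $\ell=+\infty$ for $\mu_{i,p}/\mu(x_p)$; the contradiction itself does not require $\ell\leq 1$ but only that $\ell$ be strictly positive, so the argument is actually quite robust.
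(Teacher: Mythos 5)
Your argument is correct and is essentially the paper's own proof: both argue by contradiction, extract a subsequential limit $\ell>0$ of $\mu_{i,p}/\mu(x_p)$ using Proposition \ref{lemma:rapportoMuBounded}, and then combine the bound $|x_p-x_{i,p}|/\mu(x_p)\leq\sqrt{C}$ coming from $(\mathcal{P}_3^k)$ with the factorization $\tfrac{|x_p-x_{i,p}|}{\mu_{i,p}}=\tfrac{|x_p-x_{i,p}|}{\mu(x_p)}\cdot\tfrac{\mu(x_p)}{\mu_{i,p}}$ to contradict \eqref{ConditionNonVedo}. Your closing observation that only $\ell>0$ (not $\ell\leq1$) is really needed is accurate and harmless.
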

\begin{proof}
By Proposition \ref{lemma:rapportoMuBounded} we know that
\[
\frac{\mu_{i,p}}{\mu (x_{p})}\leq 1 +o(1).
\]
Assume by contradiction that \eqref{ConditionNonVedo} holds but there exists $0<\ell \leq 1$ and a sequence $p_n\rightarrow +\infty$ such that
\begin{equation}\label{ipotAssurdo4}
\frac{\mu_{i,p_n}}{\mu (x_{p_n})}\rightarrow \ell, \ \mbox{ as } n\rightarrow +\infty.
\end{equation}
Then \eqref{ipotAssurdo4} and $(\mathcal{P}_3^{k})$ imply
\[
\frac{|x_{p_n}-x_{i,p_n}|}{\mu_{i, p_n}}=\frac{|x_{p_n}-x_{i,p_n}|}{\ell\  \mu (x_{p_n})}+o_{n}(1)\leq \frac{C}{\ell} +o_n(1) \ \mbox{ as }n\rightarrow +\infty
\]
which contradicts \eqref{ConditionNonVedo}.
\end{proof}

\

\begin{remark} \label{remarkSegnoNegativoMu}
If  $u_p(x_p)$ and $u_p(x_{i,p})$ have opposite sign, i.e.
\[u_p(x_p)u_p(x_{i,p})<0,\]
then, by Corollary \ref{cor:nonvedoNL}, necessarily \eqref{ConditionNonVedo} holds.
Hence in this case \[\tfrac{\mip}{\mu (x_p)}\to 0 \ \mbox{ as } p\rightarrow +\infty.\]
\end{remark}

\

\

Next result characterizes in different ways the concentration set $\mathcal S$.

\begin{proposition}[Characterizations of $\mathcal S$]\label{prop:stime gradiente}\label{propozition: caratterizzazioneS}
Let $(u_p)$ be a family of solutions to \eqref{problem1} satisfying \eqref{energylimit}. Then the following holds:  
\begin{equation}\label{carattSi}\mathcal S=\left\{x\in\overline\Omega\, :\, \forall\, r_0>0,\  \forall\, p_0>1,\ \exists\, p>p_0\  \mbox{ s.t. }\ p\int_{B_{r_0}(x)\cap\Omega} |u_p(x)|^{p+1}\ dx\geq 1\right\};
\end{equation}
\begin{equation}\label{carattSii}
\mathcal{S}=\left\{x\in\overline\Omega\, :\, \exists\, \mbox{a subsequence of $(\upp)$ and a sequence $x_p\to_p x$ s.t. $p|\upp(x_p)|\to_p+\infty$} \right\}.
\end{equation}
\end{proposition}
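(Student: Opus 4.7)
The plan is to denote by $\mathcal{S}_1$ and $\mathcal{S}_2$ the sets appearing on the right-hand sides of \eqref{carattSi} and \eqref{carattSii}, respectively, and establish the two chains of inclusions $\mathcal{S}\subseteq\mathcal{S}_j\subseteq\mathcal{S}$ for $j=1,2$ separately.

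For $\mathcal{S}\subseteq\mathcal{S}_1$, I would fix $x=\lim_p x_{i,p}$ for some $i\in\{1,\ldots,k\}$ and any $r_0>0$. Since $\mu_{i,p}\to 0$ and $x_{i,p}\to x$, for every fixed $R>0$ the ball $B_{R\mu_{i,p}}(x_{i,p})$ lies inside $B_{r_0}(x)\cap\Omega$ for $p$ large. A change of variables gives
\[
p\int_{B_{R\mu_{i,p}}(x_{i,p})}|u_p|^{p+1}\,dy = |u_p(x_{i,p})|^2\int_{B_R(0)}\Bigl|1+\frac{v_{i,p}(z)}{p}\Bigr|^{p+1}dz,
\]
and combining $(\mathcal{P}_2^k)$, Fatou's lemma and \eqref{RemarkMaxCirca1} one obtains
\[
\liminf_{p\to+\infty} p\int_{B_{r_0}(x)\cap\Omega}|u_p|^{p+1}\,dy\,\geq\,\int_{B_R(0)}e^U\,dz,
\]
which, letting $R\to+\infty$, is bounded below by $8\pi>1$. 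Hence the inequality in \eqref{carattSi} is satisfied for all sufficiently large $p$, so $x\in\mathcal{S}_1$.

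For the reverse inclusion $\mathcal{S}_1\subseteq\mathcal{S}$, I would argue by contradiction: if $x\notin\mathcal S$ one can choose $r_0>0$ so small that $\overline{B_{r_0}(x)}\cap\bar\Omega$ is a compact subset of $\bar\Omega\setminus\mathcal{S}$. The uniform convergence \eqref{pu_va_a_zero} then provides a sequence $\varepsilon_p\to 0$ with $\|u_p\|_{L^\infty(B_{r_0}(x)\cap\Omega)}\leq \varepsilon_p/\sqrt{p}$, so
\[
p\int_{B_{r_0}(x)\cap\Omega}|u_p|^{p+1}\,dy\,\leq\, |\Omega|\,\varepsilon_p^{\,p+1}\,p^{(1-p)/2}\,\longrightarrow\,0,
\]
contradicting the membership of $x$ in $\mathcal{S}_1$.

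For \eqref{carattSii}, the inclusion $\mathcal{S}\subseteq \mathcal{S}_2$ is immediate: given $x=\lim_p x_{i,p}$ one takes the sequence $x_p=x_{i,p}$, and \eqref{RemarkMaxCirca1} gives $p|u_p(x_p)|\geq p(1+o(1))\to+\infty$. For $\mathcal{S}_2\subseteq \mathcal{S}$, if $x_p\to x\notin\mathcal{S}$ I would pick a compact neighborhood $K$ of $x$ contained in $\bar\Omega\setminus\mathcal{S}$; then \eqref{pu_va_a_funzione} gives $pu_p\to v$ uniformly on $K$ for some $v\in C^2(K)$, so $p|u_p(x_p)|\to |v(x)|<+\infty$, contradicting the defining property of $\mathcal{S}_2$. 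The main technical obstacle of the whole argument is the inclusion $\mathcal{S}_1\subseteq\mathcal{S}$: it depends crucially on the sharp decay $\sqrt{p}\,u_p\to 0$ far from $\mathcal S$ given by Theorem \ref{thm:x1N}, since the naive uniform bound \eqref{boundSoluzio} alone would not be enough to force $p\,\|u_p\|_{L^{p+1}(K)}^{p+1}\to 0$ on compact $K\subset\bar\Omega\setminus\mathcal{S}$.
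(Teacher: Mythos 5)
Your proof is correct and follows essentially the same route as the paper: the inclusion $\mathcal S\subseteq\mathcal S_1$ via the rescaled integral, Fatou's lemma and \eqref{RemarkMaxCirca1} is exactly the paper's argument, and the remaining inclusions are handled with the same ingredients supplied by Theorem \ref{thm:x1N}. The only cosmetic difference is that for $\mathcal S_1\subseteq\mathcal S$ the paper integrates the pointwise bound $(\mathcal{P}_3^k)$ over a small ball (getting smallness as $r\to 0^+$ uniformly in $p$) rather than invoking the decay \eqref{pu_va_a_zero}, and for $\mathcal S_2\subseteq\mathcal S$ it cites $(\mathcal{P}_4^k)$ where you cite \eqref{pu_va_a_funzione}; both variants are valid.
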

\begin{proof}
Proof of \eqref{carattSi}: by $(\mathcal{P}^k_3)$ it is immediate to see that if $x\notin\mathcal{S}$ then $p\int_{B_r(x)\cap\Omega}|\upp(x)|^{p+1}\,dx\to0$ as $r\to0^+$, uniformly in $p$. On the other hand if $x\in\mathcal S$, i.e. $x=\lim_{p\to+\infty}\xip$ for some $i=1,\ldots,k$, we fix $R>0$ such that $\int_{B_R(0)}e^U\,dx>1$ (where $U$ is defined in \eqref{v0}) and then for $p$ large, reasoning as in the proof of Lemma \ref{lemma:BoundEnergiaBassino}, we get:
\[
p\int_{B_{r_0}(x)\cap\Omega}|\upp(x)|^{p+1}dx\geq p\int_{B_{R\mu_{i,p}}(\xip)}|\upp(x)|^{p+1}\,dx  = |u_p(x_{i,p})|^2\int_{B_R(0)} \left(1+\frac{v_{j,p}(y)}{p}\right)^{p+1}\, dy,
\]
where by Fatou's lemma
\begin{eqnarray*}
\liminf_{p\to+\infty} |u_p(x_{i,p})|^2\int_{B_R(0)} \left(1+\frac{v_{j,p}(y)}{p}\right)^{p+1}\, dy&\geq& \liminf_{p\to+\infty}|\upp(\xip)|^2\int_{B_R(0)}e^{U(y)}\,dy
\\
& \overset{\eqref{RemarkMaxCirca1}}{\geq}& \int_{B_R(0)}e^{U(y)}\,dy>1.
\end{eqnarray*}

Proof of \eqref{carattSii}: if $x\notin \mathcal S$, then by \emph{$(\mathcal{P}_4^k)$}, which holds by Theorem \ref{thm:x1N}, $p|\upp|$ is uniformly bounded in $L^{\infty}(K)$ for some compact set $K$ containing $x$ and then it can not exist a sequence $x_p\to x$ such that $p|\upp(x_p)|\to+\infty$. Conversely if $x\in\mathcal S$, i.e. $x=\lim_{p\to+\infty}\xip$ for some $i=1,\ldots,k$, and by \eqref{muVaAZero} we know that $|\upp(x_{i,p})|\geq\frac12$ for $p$ large, therefore $p|\upp(\xip)|\to+\infty$. This proves \eqref{carattSii}.
\end{proof}

\

\

We conclude this section with a result for positive solutions that we have recently obtained  (\cite{DIPpositive}) carrying on the asymptotic analysis started in Theorem \ref{thm:x1N}.

\begin{theorem}
\label{teo:Positive} 
Let $(u_p)$ be a family of positive solutions to \eqref{problem1} satisfying
\begin{equation}
\label{energylimitCappositive}
p\int_{\Omega}|\nabla u_p|^2 dx\to C,\ \mbox{ as $p\to+\infty$ } C \geq 8\pi e.
\end{equation}
Let $k\in \N\setminus\{0\}$ and $(x_{i,p})$, $i=1,\ldots,k$, the integer and the families of points of $\Omega$ defined in Theorem \ref{thm:x1N}. If we denote by $x_i=\lim\limits_{p\to+\infty}\xip$, then, up to a subsequence we have:
\begin{itemize}
\item[$(i)$] $x_i\in\Omega$,  $x_i\neq x_j$ for $i\neq j$, therefore the concentration set $\mathcal S$, introduced in \eqref{S}, consists of $k$ points
\[
\mathcal S=\{x_1,\ldots,x_k\}\subset \Omega;
\]
\item[$(ii)$]  
\[
pu_p(x)\to  8\pi \sum_{i=1}^k m_i G(x,x_i)\ \mbox{as} \ p\rightarrow +\infty,\  \mbox{in}\ 
C^2_{loc}(\bar\Omega\setminus\mathcal S),
\]
where $m_i:=\lim_{p\rightarrow +\infty} \|u_p\|_{L^{\infty}(\overline{B_{\delta}(x_i)})}$, for any $\delta>0$ such that $B_\delta(x_i)$ does not contain any other $x_j$, $j\neq i$, and $G$ is the Green's function of $-\Delta$ in $\Omega$ under Dirichlet boundary conditions;
\item[$(iii)$] 
\[p\int_\Omega |\nabla u_p(x)|^2\,dx\to 8\pi \sum_{i=1}^k m_i^2,\  \mbox{ as }p\to+\infty;\]
\item[$(iv)$] the concentration points $x_i, \ i=1,\ldots, k$ satisfy  
\begin{equation}\label{x_j relazione}
m_i \nabla_x H(x_i,x_i)+\sum_{i\neq \ell}m_\ell\nabla_x G(x_i,x_\ell)=0,
\end{equation}
where
\begin{equation}\label{H parteregolareGreen}
H(x,y)=G(x,y)+\frac{\log(|x-y|)}{2\pi}
\end{equation}
is the regular part of the Green's function $G$.
\item[$(v)$]
\[m_i \geq \sqrt{e},\qquad \forall i=1,\ldots,k.\]
So in particular \[\lim_{p\rightarrow +\infty} \|u_p\|_{\infty}\geq \sqrt{e}\]
and we have an estimate of $k$ in terms of the constant $C$ in the assumption \eqref{energylimitCappositive}:
\[
C\geq k\,8\pi e.\]
\end{itemize}
\end{theorem}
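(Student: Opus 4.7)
I base the proof on the Green's representation
\[
p u_p(x) \;=\; \int_\Omega G(x,y)\,p u_p(y)^p\, dy
\]
together with a precise analysis of the measure $p u_p^p\, dx$, which I will show concentrates as $8\pi\sum_{i=1}^k m_i \delta_{x_i}$. Near each bubble, $(\mathcal{P}_2^k)$ and the defining relation $p\mu_{i,p}^2 u_p(x_{i,p})^{p-1}=1$ give, via the change of variables $y = x_{i,p}+\mu_{i,p} z$,
\[
p\int_{B_{R\mu_{i,p}}(x_{i,p})} u_p^p\, dx \;=\; u_p(x_{i,p})\!\int_{B_R(0)}\bigl(1+\tfrac{v_{i,p}(z)}{p}\bigr)^{\!p}\,dz \;\longrightarrow\; m_i\!\int_{\mathbb{R}^2} e^U\, dz \;=\; 8\pi m_i,
\]
as first $p\to\infty$, then $R\to\infty$. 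The same rescaling shows that the intermediate annuli $B_\rho(x_{i,p})\setminus B_{R\mu_{i,p}}(x_{i,p})$ carry a mass $m_i\int_{|z|\ge R}e^U\, dz\to 0$ as $R\to\infty$, while on $\Omega\setminus\bigcup_i B_\rho(x_{i,p})$ one has $u_p = O(1/p)$ uniformly by \eqref{pu_va_a_funzione}, so $p u_p^p$ decays super-polynomially there. Inserting this into the Green's representation and using continuity of $G(x,\cdot)$ at each $x_i$ yields $p u_p(x)\to 8\pi\sum_i m_i G(x,x_i)$ pointwise on $\bar\Omega\setminus\mathcal S$; since $-\Delta(p u_p)=p u_p^p$ vanishes uniformly on compacts of $\bar\Omega\setminus\mathcal S$, elliptic regularity upgrades this to $C^2_{\mathrm{loc}}$, proving (ii).

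\textbf{Proof of (i).} Interiority of each $x_i$ is classical for positive solutions (via a Pohozaev/Kelvin argument, or the moving-plane method excluding boundary concentration). For distinctness I use an intermediate-scale matching: assume $x_i=x_j=x^*$ for some $j\ne i$. Using $U(z) = -4\log|z| + 2\log 8 + o(1)$ at infinity, $(\mathcal{P}_2^k)$ gives
\[
p u_p(x_{i,p}+\mu_{i,p} z) = p m_i - 4 m_i \log|z| + 2 m_i \log 8 + o(1),
\]
for $|z|$ large but finite; on the other hand (ii) gives $p u_p(x)\sim -4\bigl(\sum_{\ell:\,x_\ell=x^*} m_\ell\bigr)\log|x-x^*| + O(1)$ near $x^*$. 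Using $|x-x^*|=\mu_{i,p}|z|(1+o(1))$ and matching the coefficient of $\log|z|$ forces $\sum_{\ell:\,x_\ell=x^*,\,\ell\ne i} m_\ell = 0$, contradicting $m_\ell\ge 1$ from \eqref{RemarkMaxCirca1}.

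\textbf{Proofs of (iii), (iv), (v).} Part (iii) follows from repeating the bubble/complement decomposition on $p u_p^{p+1}\,dx$: each bubble contributes $m_i^2\int_{\mathbb R^2}e^U\, dz = 8\pi m_i^2$, and $\int|\nabla u_p|^2=\int u_p^{p+1}$ delivers the identity. For (iv) I test $-\Delta u_p = u_p^p$ against $\partial_\alpha u_p$ and integrate over a ball $B_r(x_i)$ containing only $x_i$: the interior term is a boundary integral of $u_p^{p+1}$ which is super-polynomially small in $1/p$, and the remaining boundary quadratic form in $p u_p$ converges, by (ii), to the corresponding expression in $V:=8\pi\sum_\ell m_\ell G(\cdot,x_\ell)$. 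Decomposing $V = -4m_i\log|x-x_i| + 8\pi m_i H(x,x_i) + 8\pi\sum_{\ell\ne i} m_\ell G(x,x_\ell)$ and letting $r\to 0$, a direct computation on $\partial B_r(x_i)$ (the logarithmic cross-term is odd and averages to $0$) produces
\[
8\pi m_i\Bigl(m_i \partial_\alpha H(x_i,x_i) + \sum_{\ell\ne i} m_\ell \partial_\alpha G(x_i,x_\ell)\Bigr) = 0,
\]
which is \eqref{x_j relazione}. Finally (v) is obtained by pushing the matching of (i) to the next order: equating the $O(1)$ terms on both sides gives, after cancellation of the $\log|z|$ contributions,
\[
p + 2\log 8 = -4\log\mu_{i,p} + 8\pi H(x_i,x_i) + \tfrac{8\pi}{m_i}\sum_{\ell\ne i} m_\ell G(x_i,x_\ell) + o(1).
\]
Substituting $\log\mu_{i,p}=-\tfrac12\log p-\tfrac{p-1}{2}\log u_p(x_{i,p})$ forces $\log u_p(x_{i,p})=\tfrac12+O(\log p/p)$, hence $\lim u_p(x_{i,p})=\sqrt e$ and a fortiori $m_i\ge\sqrt e$; the bound $\lim\|u_p\|_\infty\ge\sqrt e$ and $C\ge 8\pi k e$ then follow from (iii).

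\textbf{Main obstacle.} The critical technical ingredient is the asymptotic matching at the intermediate scale used both for the distinctness in (i) and for the sharp quantization $m_i\ge\sqrt e$ in (v): it requires upgrading the $C^2_{\mathrm{loc}}$ statement of (ii) to an expansion accurate at $O(1)$ on annuli $c\mu_{i,p}<|x-x_{i,p}|<\delta$, book-keeping the logarithmic singularities from every bubble and the regular contributions from $H$ and the remaining Green's functions. By comparison, the Pohozaev computation in (iv) and the energy split in (iii) are routine consequences of (ii).
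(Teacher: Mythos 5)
First, a remark on the comparison: the paper does not actually prove Theorem \ref{teo:Positive} — it is stated and deferred to \cite{DIPpositive}, which is listed as ``in preparation''. So your proposal can only be measured against the strategy that the surrounding material (Theorem \ref{thm:x1N}, $(\mathcal{P}_1^k)$--$(\mathcal{P}_4^k)$, Lemma \ref{lemma:BoundEnergiaBassino}) sets up, and indeed your outline — Green's representation, quantization of the local masses $p\int u_p^p$, Pohozaev identity for (iv), intermediate matching for (i) and (v) — is the standard and intended route.

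There is, however, a genuine gap at the very first step, and it propagates to (ii), (iii) and everything built on them. You assert that ``the same rescaling shows'' that the annuli $B_\rho(x_{i,p})\setminus B_{R\mu_{i,p}}(x_{i,p})$ carry mass $m_i\int_{|z|\ge R}e^U\,dz$. The rescaling $(\mathcal{P}_2^k)$ only gives $C^2_{loc}$ convergence, i.e.\ control on $|z|\le R$ for each \emph{fixed} $R$; it says nothing about the region $R\le |z|\le \rho/\mu_{i,p}$, whose outer radius blows up. Fatou (as in Lemma \ref{lemma:BoundEnergiaBassino}) gives only the \emph{lower} bound $\liminf p\int_{B_\rho(x_{i,p})}u_p^p\ge 8\pi m_i$. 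For the matching upper bound the only uniform information available is $(\mathcal{P}_3^k)$, which yields $p\,u_p(x)^p\le C\|u_p\|_\infty |x-x_{i,p}|^{-2}$, and the integral of $|x-x_{i,p}|^{-2}$ over the neck is of order $\log\bigl(\rho/\mu_{i,p}\bigr)\sim \tfrac{p}{2}\log m_i$, which diverges. Likewise $(\mathcal{P}_4^k)$ integrated along rays gives $u_p(x)\le u_p(y)+\tfrac{C}{p}\log\tfrac{|x-x_{i,p}|}{R\mu_{i,p}}$, whence $u_p(x)^p\lesssim u_p(y)^p\,(|x-x_{i,p}|/R\mu_{i,p})^{C'}$ with a \emph{positive} exponent — still not integrable against the quantization. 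What is needed is a genuine decay estimate in the neck, e.g.\ $v_{i,p}(z)\le(-4+\varepsilon)\log|z|+C_\varepsilon$ for all $|z|\le\delta/\mu_{i,p}$, proved by comparison/Harnack arguments; this is the technical core of the quantization $p\int_\Omega u_p^p\to 8\pi\sum_i m_i$ and of $p\int_\Omega u_p^{p+1}\to 8\pi\sum_i m_i^2$. You correctly identify the $O(1)$-accurate intermediate expansion as ``the main obstacle'' for (i) and (v), but you treat the mass quantization underlying (ii) and (iii) as if it followed from the rescaling alone — it does not; it requires the same (or a closely related) neck analysis. Two smaller points: the identification $\lim_p u_p(x_{i,p})=m_i$ (you use both interchangeably) needs a short argument via $(\mathcal{P}_2^k)$--$(\mathcal{P}_3^k)$; and your matching in (v), if carried out, would give the equality $m_i=\sqrt e$ rather than the stated inequality, which is stronger than what the theorem claims and therefore all the more in need of the sharp expansion you have not supplied. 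The Pohozaev computation for (iv), granted (ii), is routine as you say.
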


\

\begin{remark}
For least energy solutions $p\int_{\Omega}|\nabla u_p|^2\rightarrow 8\pi e$ (see \cite{RenWeiTAMS1994} and \cite{RenWeiPAMS1996}) and so Theorem \ref{teo:Positive} implies that $k=1$ and  $\lim\limits_{p\to+\infty}\|u_p\|_{\infty}=\sqrt{e}$, which was already known from \cite{AdiGrossi}.
\end{remark}

\

\

\section{$G$-symmetric case} \label{Section:GSymmetric}

\

In this section we focus on sign-changing solutions. Of course all the results in Section \ref{SectionGeneralAnalysis} hold true if assumption \eqref{energylimit} is satisfied, in particular Theorem \ref{teo:BoundEnergia} and Theorem \ref{thm:x1N}.
\\

Hence letting  $x_p^{\pm}$ be the family of points where $|u_p(x_p^{\pm})|=\|u_p^{\pm}\|_{\infty}$, then from Theorem \ref{teo:BoundEnergia}-\emph{$(i)$} we know that for $x_p^{\pm}$ the analogous of \eqref{muVaAZero} and \eqref{RemarkMaxCirca1} hold and so we have
\begin{equation}\label{mumenozero}
\mu_{p}^{\pm}:=\left(p |\upp(x_p^{\pm})|^{p-1}\right)^{-\frac{1}{2}}\rightarrow 0\ \mbox{ as }p\rightarrow +\infty.
\end{equation}

From now on w.l.g. we assume that the $L^{\infty}$-norm of $u_p$ is assumed at a maximum point, namely that $u_p(x_p^+)=\|u_p^+\|_{\infty}=\|u_p\|_{\infty}$ and that $-u_p(x_p^-)=\|u_p^-\|_{\infty}$.\\

So by Theorem \ref{teo:BoundEnergia}-(\emph{$ii$}) we already know that scaling  $u_p$ about the maximum point $x_p^+$ as in \eqref{scalingMax} gives a first ``bubble'' converging to the function $U$ defined in \eqref{v0}.\\

In general, for sign-changing solutions,  one would like to investigate the behavior of $u_p$ when scaling about the minima $x_p^-$ and understand whether $x_p^-$  coincides with one of the $k$ sequences in Theorem \ref{thm:x1N} or not. Moreover one would like to describe the set of concentration $\mathcal S$.
\\

Recall that if  $x_p^-$ is one of the sequences of Theorem \ref{thm:x1N} then by Corollary \ref{cor:nonvedoNL} one has  
\begin{equation}\label{condizioneN}
\frac{dist(x_p^-,NL_p)}{\mu_p^-}\rightarrow +\infty,\ \mbox{ as }\ p\rightarrow +\infty,
\end{equation}
where $NL_p$ denotes the nodal line of $u_p$. On the contrary it is easy to see that if \eqref{condizioneN} is satisfied, then
one can use the same ideas as in the proof of Theorem  \ref{teo:BoundEnergia}-(\emph{$ii$}) also for the scaling about the minimum which we define in the natural way as
\begin{equation}\label{riscalataAboutMin}
v_p^-(x):=p\frac{u_p(x_p^-+\mu_p^- x)-u_p(x_p^-)}{u_p(x_p^-)}, \quad x\in \widetilde{\Omega}_p^-  :=\{x\in\R^2\,:\,x_p^-+\mu_{p}^-x\in\Omega\},
\end{equation}
and obtain that
$v_p^-\rightarrow U$ in $C^2_{loc}(\R^2)$ (this has been done for instance in \cite{GrossiGrumiauPacella1} for the case of low-energy sign-changing solutions under some additional assumptions).\\

Here we  analyze the case when $u_p$ belongs to a family of $G$-symmetric sign-changing solutions satisfying the same properties as the ones in Theorem \ref{thm:existenceINTRO} recalled in the Introduction and show that  a different phenomenon appears.

All the results of this section are mainly based on the work \cite{DeMarchisIanniPacellaJEMS}, the existence result (Theorem \ref{thm:existenceINTRO}) is instead contained in \cite{DeMarchisIanniPacellaJDE}.\\

We prove the following:

\begin{theorem}
\label{TeoremaPrincipaleCasoSimmetrico}
Let $\Omega\subset\R^2$ be a connected bounded smooth domain, invariant under the action of a cyclic group $G$ of
rotations about the origin,
with $|G|\geq 4e$ ($|G|$ is the order of $G$) and such that the origin $O\in\Omega$.
Let $(\upp)$ be a family of sign-changing $G$-symmetric  solutions of \eqref{problem1} with two nodal regions, $NL_p\cap\partial\Omega=\emptyset$, $O\not\in NL_p$ and
\bel\label{assumptionEnergyINIZIO}
p\int_{\Omega}|\nabla\upp|^2 dx\leq\alpha\,8\pi e
\eel
for some  $\alpha<5$ and $p$ large. Then, assuming w.l.o.g. that $\|u_p\|_{\infty}=\|u_p^+\|_{\infty}$, we have:
\begin{itemize}
\item[i)] $\mathcal S=\{O\}$ and $k=1$\\
where $S$ and $k$ are the ones in  Theorem \ref{thm:x1N};
\item[ii)] $|x_p^{+}|\rightarrow O \ \mbox{ as }\ p\rightarrow +\infty$;
\item[iii)] $|x_p^{-}|\rightarrow O \ \mbox{ as }\ p\rightarrow +\infty$;
\item[iv)] $NL_p$ shrinks to the origin as $p\rightarrow +\infty$;
\item[v)] There exists $x_{\infty}\in\mathbb R^2\setminus\{ 0\}$ such that, up to a subsequence, $\frac{x_p^-}{\mu_p^-}\rightarrow - x_{\infty} $ and
\[v_p^-(x)\longrightarrow V(x-x_{\infty})\quad\mbox{  in
}C^2_{loc}(\R^2\setminus\{x_{\infty}\})\mbox{ as }p\rightarrow +\infty,\] where  
\begin{equation}
\label{espressioneEsplicitaV}
V(x):=\log\left(\frac{2\alpha^2\beta^{\alpha}|x|^{\alpha -2}}{(\beta^{\alpha}+|x|^{\alpha})^2} \right),
\end{equation}
with $\alpha=\alpha(|x_{\infty}|)=\sqrt{2|x_{\infty}|^2+4}$, $\beta=\beta(|x_{\infty}|)=|x_{\infty}| \left(\frac{\alpha+2}{\alpha-2} \right)^{\frac{1}{\alpha}}$, is
a singular radial solution  of 
\begin{equation}
\label{LiouvilleSingularEquation}
\left\{
\begin{array}{lr}
-\Delta V=e^V+ H\delta_{0}\quad\mbox{ in }\R^2\\
\int_{\R^2}e^Vdx<\infty
\end{array}
\right.
\end{equation}
where $H=H(|x_{\infty}|)<0$ is a suitable constant and $\delta_{0}$ is the Dirac measure centered at $0$.
\end{itemize} 
\end{theorem}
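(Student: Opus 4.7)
The plan is to combine the concentration framework of Theorem \ref{thm:x1N} with the $G$-symmetry of $u_p$ and the sharp energy ceiling \eqref{assumptionEnergyINIZIO} to pin down all the mass at the origin, and then to perform a finer rescaling around $x_p^-$ whose limit turns out to be singular rather than a regular Liouville bubble.

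\textbf{Parts (i)-(iii).} Apply Theorem \ref{thm:x1N} to extract $k$ and families $(x_{i,p})$ with $x_{1,p}=x_p^+$. If some $\lim x_{i,p}=y\neq O$, the $G$-orbit $Gy\subset\mathcal S$ consists of $|G|$ distinct points, all lying in the same nodal region (since $G$ preserves the sign of $u_p$). Applying Lemma \ref{lemma:BoundEnergiaBassino} to the $|G|$ orbit sequences (which satisfy $(\mathcal P_1^{|G|})$ by construction), combined with the Liouville lower bound \eqref{energylimitLowerPosNeg} on the opposite-sign part of $u_p$, yields $p\int|\nabla u_p|^2\geq 8\pi(|G|+e)\geq 40\pi e$, contradicting \eqref{assumptionEnergyINIZIO} because $|G|\geq 4e$ and $\alpha<5$. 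Hence $\mathcal S=\{O\}$, which implies (ii); the same argument performed at the minimum instead of at a non-origin orbit point gives (iii). A further $G$-symmetry argument at the rescaled level forces $x_{i,p}/\mu_{i,p}\to 0$ for each $i$: in the $y$-coordinates centred at $x_{i,p}$ and scaled by $\mu_{i,p}$, $G$ acts as rotations about $-x_{i,p}/\mu_{i,p}$, and by $(\mathcal P_2^k)$ the limit $U$ of \eqref{v0} must be invariant under this action; since $U$ is strictly radial about $0$, the rotation centre must also be $0$. Finally, $k=1$: if $k\geq 2$, choose $i,j$ with $\mu_{i,p}\geq\mu_{j,p}$; then
\[
\fr{|x_{i,p}-x_{j,p}|}{\mu_{i,p}}\leq \fr{|x_{i,p}|}{\mu_{i,p}}+\fr{|x_{j,p}|}{\mu_{j,p}}\longrightarrow 0,
\]
contradicting $(\mathcal P_1^k)$.

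\textbf{Setting up the rescaling.} Since $u_p(x_p^\pm)$ have opposite signs, Remark \ref{remarkSegnoNegativoMu} gives $\mu_p^+/\mu_p^-\to 0$, and $(\mathcal P_3^1)$ at $x=x_p^-$ gives $|x_p^+-x_p^-|/\mu_p^-\leq C$. Set $y_p:=(x_p^+-x_p^-)/\mu_p^-$. One cannot have $y_p\to 0$: indeed $v_p^-(y_p)=p\left[u_p(x_p^+)/u_p(x_p^-)-1\right]\to-\infty$ (opposite signs), while $v_p^-(0)=0$, and the Harnack/harmonic-part argument from the proof of Theorem \ref{teo:BoundEnergia}(ii), applied on a ball around $0$ contained in the rescaled negative nodal region (where $-\Delta v_p^-$ is uniformly bounded), would force $v_p^-$ to remain bounded near $0$. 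Hence, up to a subsequence, $y_p\to x_\infty\neq 0$; combining with $|x_p^+|/\mu_p^-=(|x_p^+|/\mu_p^+)(\mu_p^+/\mu_p^-)\to 0$, we obtain $-x_p^-/\mu_p^-\to x_\infty$.

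\textbf{Parts (iv)-(v).} On any compact $K\subset\mathbb R^2\setminus\{x_\infty\}$, the positive bubble around $y_p$ has rescaled width $\mu_p^+/\mu_p^-\to 0$ and therefore degenerates to the single point $x_\infty$; for $p$ large, $x_p^-+\mu_p^- K$ lies in the negative nodal region of $u_p$. There $-\Delta v_p^-=|1+v_p^-/p|^{p-1}(1+v_p^-/p)$ is uniformly bounded, and standard elliptic theory together with Harnack gives $v_p^-\to V$ in $C^2_{loc}(\mathbb R^2\setminus\{x_\infty\})$ with $-\Delta V=e^V$. Since $v_p^-\to-\infty$ only in a vanishing neighbourhood of $y_p$, the nodal line $\{v_p^-=-p\}$ is confined to a rescaled ball of radius $o(1)$ around $x_\infty$, which in original coordinates shrinks to the point $x_p^-+\mu_p^- x_\infty\to O$; this proves (iv). Integrating $-\Delta v_p^-$ over a small disc around $x_\infty$ and pulling back to $\Omega$, the positive $\mu_p^+$-bubble contributes a negative Dirac mass $H\delta_{x_\infty}$ whose weight is controlled by the Liouville integral $\int_{\mathbb R^2}e^U=8\pi$ of the first bubble, corrected by the sign and by the asymptotic ratio of $|u_p(x_p^+)|$ to $|u_p(x_p^-)|$; after the translation $V\mapsto V(\cdot-x_\infty)$, $V$ is a singular solution of \eqref{LiouvilleSingularEquation}. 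Radiality about $x_\infty$ follows from the $G$-symmetry of $v_p^-$ about $-x_p^-/\mu_p^-\to x_\infty$, which gives $|G|$-fold rotational symmetry of $V$, combined with the Chen-Li classification \cite{ChenLi} of singular Liouville solutions with prescribed mass; from here \eqref{espressioneEsplicitaV} is a direct computation matching $V(0)=0$ and the value of $H$ in terms of $|x_\infty|$.

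The main obstacle is the rigorous identification of $H$: one must pass to the limit through the shrinking transition zone around $y_p$ where the $\mu_p^+$-bubble, the nodal line, and the leading order of the negative bubble all coexist at scales negligible compared to $\mu_p^-$, and compute precisely the positive bubble's distributional contribution to the equation satisfied by $V$. A secondary subtlety is the upgrade from the finite cyclic rotational symmetry of $V$ to full radial symmetry, which uses the rigidity of the Chen-Li classification rather than a direct symmetrisation argument from $|G|\geq 4e$ alone.
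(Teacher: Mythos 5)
Your treatment of (i)--(iii) follows the paper's strategy (counting disjoint bubbles along a $G$-orbit against the energy ceiling, then using the rotational symmetry of the rescaled limit), with one repairable gap: you invoke the orbit-counting only when $\lim x_{i,p}\neq O$, but the rotation-centre argument you then use to conclude $x_{i,p}/\mu_{i,p}\to 0$ presupposes that $|x_{i,p}|/\mu_{i,p}$ stays bounded --- if it diverges, the rotation centre escapes to infinity and $(\mathcal{P}_2^k)$ on compact sets imposes no constraint. The case $x_{i,p}\to O$ with $|x_{i,p}|/\mu_{i,p}\to+\infty$ must be excluded by running the same orbit-counting at scale $\mu_{i,p}$ (the $|G|$ orbit points are then still mutually separated at that scale and each carries mass $8\pi$); this is exactly what Proposition \ref{MaxVaazero} does.

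The genuine gaps are in (iv)--(v), at precisely the two points the paper singles out as hardest. First, your argument that $y_p=(x_p^+-x_p^-)/\mu_p^-\not\to 0$ fails: the point $y_p$ where $v_p^-\to-\infty$ lies in the rescaled \emph{positive} nodal region, where $-\Delta v_p^-$ is of order $(\|u_p\|_\infty/|u_p(x_p^-)|)^p\to\infty$, so no ball around $0$ on which the Laplacian is uniformly bounded contains $y_p$; and on $\widetilde{\mathcal{N}}_p^-$ the Harnack constant degenerates because $\partial\widetilde{\mathcal{N}}_p^-$ approaches $0$ --- a harmonic function bounded above can equal $-p$ on a boundary piece of small harmonic measure while staying bounded at the origin, so no contradiction arises. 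The paper proves $|x_\infty|=\ell>0$ by a global mechanism: the divergence theorem on $B_{|x_p^-|}(0)$, where the boundary flux is $o_p(1)$ if $\ell=0$ (via the auxiliary rescaling $w_p^-$ at scale $|x_p^-|$, whose limit $\gamma$ is then identically zero), while the positive bubble contributes a fixed positive amount to the interior integral (Proposition \ref{prop:NLp+l>0}). Second, your localization of the nodal line ("$v_p^-\to-\infty$ only near $y_p$, hence $NL_p$ is confined there") is circular: to know where $v_p^-$ blows down you must control the right-hand side of its equation, which requires knowing where the positive region is --- the very question at issue. The paper handles this separately (Proposition \ref{prop:NodalLineShrinks}) via a dichotomy for $|y_p|/|x_p^-|$, $y_p\in NL_p$, derived from the $|x_p^-|$-scale rescaling, plus a connectedness argument through the origin. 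Finally, the identification of $V$ and of $H$ is not done in the paper by computing the distributional contribution of the positive bubble (the step you flag as your main obstacle); instead one shows $V$ solves $-\Delta V=e^V$ in $\R^2\setminus\{0\}$ with $e^V\in L^1$, excludes a classical extension across $0$ using Lemma \ref{lemma:MaxVaazeroVelocemente} together with $\ell>0$, and then applies the classification results of Chen--Li, Chou--Wan and Prajapat--Tarantello; radiality follows from the energy bound, not from the $|G|$-symmetry alone.
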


Observe that the existence of families of solutions $u_p$ having all the properties as in the assumptions of Theorem \ref{TeoremaPrincipaleCasoSimmetrico} has been proved in \cite{DeMarchisIanniPacellaJDE} when $\Omega$ is simply connected and when $|G|>4$ (see Theorem \ref{thm:existenceINTRO} in the Section \ref{SectionClassicalResults}).\\

We also recall that in \cite{GrossiGrumiauPacella2} the case of least energy  sign-changing radial solutions in a ball has been studied, proving a result similar to that in Theorem \ref{TeoremaPrincipaleCasoSimmetrico} with precise estimates of $\alpha,\beta$ and $H$.

\

\subsection{Proofs of \emph{$(i)-(ii)$} of Theorem \ref{TeoremaPrincipaleCasoSimmetrico}}\label{Subsection:GSymmetricMax}

\

Let us introduce the following notations:
\begin{itemize}
\item $\mathcal{N}_p^{\pm}\subset\Omega$ denotes the positive/negative nodal domain of $u_p$

\item $\widetilde{\mathcal{N}}_p^{\pm}$ are the rescaled nodal domains about the points $x_p^{\pm}$ by the parameters $\mu_p^{\pm}$ defined in the introduction, i.e. \[\widetilde{\mathcal{N}}_p^{\pm}:=\frac{\mathcal{N}_p^{\pm}-x_p^{\pm}}{\mu_p^{\pm}}=\{x\in\mathbb R^2: x_p^{\pm}+\mu_p^{\pm}x\in \mathcal{N}_p^{\pm}\}.\]
\end{itemize}

Let $k$, $(x_{i,p})$, $i=1,\ldots,k$ and $\mathcal{S}$ be as in
 Theorem \ref{thm:x1N} then, defining $\mu_{i,p}$ as in \eqref{mip}, we get

\begin{proposition}\label{MaxVaazero} Under the assumptions of Theorem \ref{TeoremaPrincipaleCasoSimmetrico}. 
$$\frac{|x_{i,p}|}{\mu_{i,p}}\mbox{ is bounded, }\ \ \forall  i=1,\dots,k.$$
In particular $|x_{i,p}|\rightarrow 0$, $\forall i=1,\dots,k$, as $p\rightarrow +\infty$, so that $\mathcal{S}=\{O\}$.
\end{proposition}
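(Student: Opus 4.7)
The plan is to argue by contradiction: assume along a subsequence that $|x_{i,p}|/\mu_{i,p}\to +\infty$ for some $i\in\{1,\ldots,k\}$. The strategy is to use the $G$-symmetry of $u_p$ to manufacture enough distinct bubbles that Lemma \ref{lemma:BoundEnergiaBassino} gives an energy lower bound contradicting \eqref{assumptionEnergyINIZIO}. Once the boundedness of $|x_{i,p}|/\mu_{i,p}$ is established, the conclusions $x_{i,p}\to O$ and $\mathcal{S}=\{O\}$ follow immediately, since $\mu_{i,p}\to 0$.

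The first step exploits the $G$-orbit of $x_{i,p}$. Since $u_p$ is $G$-invariant and $G$ consists of rotations about $O$, for every $g\in G$ we have $u_p(gx_{i,p})=u_p(x_{i,p})$, and hence $\mu(gx_{i,p})=\mu_{i,p}$. For any two distinct $g,h\in G$,
$$|gx_{i,p}-hx_{i,p}|=2\sin(\theta_{gh^{-1}}/2)|x_{i,p}|,\qquad \theta_{gh^{-1}}\in(0,2\pi),$$
so the contradictory hypothesis yields $|gx_{i,p}-hx_{i,p}|/\mu_{i,p}\to +\infty$. Applying $(\mathcal{P}_3^k)$ at each $y_p=gx_{i,p}$ together with Proposition \ref{lemma:rapportoMuBounded}, there exists $j(g)\in\{1,\ldots,k\}$ with $|gx_{i,p}-x_{j(g),p}|/\mu_{i,p}\leq\sqrt{C}$. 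The separation then forces $g\mapsto j(g)$ to be injective, so $|G|\leq k$.

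The second step extracts a further $G$-orbit of opposite sign from the nodal structure. Because $NL_p$ is a compact curve in $\Omega$ missing both $\partial\Omega$ and $O$, it separates $\Omega$ into two nodal components, one a topological annulus (between $NL_p$ and $\partial\Omega$) and the other a disc containing $O$. The extremum of $u_p$ in the annular component is separated from $O$ by $NL_p$, so by Corollary \ref{cor:nonvedoNL} its distance to $O$ dominates its bubble scale; combining this with Remark \ref{remarkSegnoNegativoMu} one may repeat the orbit argument of the previous step on this extremum, producing $|G|$ additional bubbles which, having opposite sign to the first $|G|$, correspond to distinct indices in $\{1,\ldots,k\}$. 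Hence $k\geq 2|G|$.

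Finally, Lemma \ref{lemma:BoundEnergiaBassino} together with the lower bound $\alpha_j\geq 1$ from \eqref{RemarkMaxCirca1} gives
$$p\int_\Omega|\nabla u_p|^2\,dx\geq 8\pi k + o_p(1)\geq 16\pi|G|+o_p(1)\geq 64\pi e+o_p(1),$$
using $|G|\geq 4e$, while \eqref{assumptionEnergyINIZIO} forces $p\int_\Omega|\nabla u_p|^2\,dx\leq 8\pi e\alpha<40\pi e$. This is the desired contradiction. The hardest step is the second one: one must check, distinguishing cases according to which nodal component contains $O$, that the opposite-sign orbit really gives $|G|$ genuinely new bubble families rather than degenerating into fewer because some opposite-sign extremum is trapped near $O$. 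This is precisely where the geometric input of Corollary \ref{cor:nonvedoNL} and the sign control of Remark \ref{remarkSegnoNegativoMu} are essential.
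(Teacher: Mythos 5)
Your first step is sound and is essentially a variant of the paper's: under the contradiction hypothesis the $G$-orbit of $x_{i,p}$ consists of $|G|$ points, pairwise separated at a scale much larger than $\mu_{i,p}$, and each must lie within $O(\mu_{i,p})$ of a distinct concentration family by $(\mathcal{P}_3^k)$, so $k\geq|G|$ and Lemma \ref{lemma:BoundEnergiaBassino} yields $p\int_\Omega|\nabla u_p|^2\,dx\geq 8\pi|G|+o_p(1)$. The genuine gap is in your second step. Corollary \ref{cor:nonvedoNL} applies only to the $k$ families $(x_{i,p})$ produced by Theorem \ref{thm:x1N}, not to an arbitrary extremum of a nodal component, so it does not give $d(y_p,NL_p)/\mu(y_p)\to+\infty$ for the extremum $y_p$ of the opposite-sign region, and neither it nor Remark \ref{remarkSegnoNegativoMu} excludes that this extremum sits at distance $O(\mu(y_p))$ from the origin. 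Worse, the conclusion you want ($|G|$ additional well-separated opposite-sign bubbles, hence $k\geq 2|G|$) is not merely unproven but false for the solutions at hand: the whole point of Section \ref{Section:GSymmetric} is that $k=1$, that the opposite-sign extremum $x_p^-$ satisfies $|x_p^-|/\mu_p^-\to\ell\in(0,+\infty)$ (Propositions \ref{prop:MinVaazero} and \ref{prop:NLp+l>0}), so its $G$-orbit collapses into a single cluster of diameter $O(\mu_p^-)$ around $O$, and that the rescaling there converges to the singular profile $V$ rather than to $U$. No orbit argument can manufacture $|G|$ opposite-sign bubbles, so the chain $8\pi k\geq 16\pi|G|\geq 64\pi e$ cannot be salvaged as written, and with only $k\geq|G|$ your final comparison reads $32\pi e\lesssim 40\pi e$, which is no contradiction.

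The repair is cheap and is exactly what the paper does: you need no opposite-sign bubbles at all, only the universal lower bound \eqref{energylimitLowerPosNeg}, $p\int_{\mathcal N_p^-}|u_p^-|^{p+1}dx\geq 8\pi e+o_p(1)$. Assuming w.l.o.g.\ $x_{i,p}\in\mathcal N_p^+$, the paper places disjoint balls of radius $R_n=\min\{d_n/3,\,d(x_{p_n},\partial\Omega)/2,\,d(x_{p_n},NL_{p_n})/2\}$ around the orbit points $g^jx_{p_n}$ (these radii satisfy $R_n/\mu_{p_n}\to+\infty$ thanks to the contradiction hypothesis, Remark \ref{rem:nonvedobordo} and Corollary \ref{cor:nonvedoNL} applied to the family $x_{i,p}$ itself, where it is legitimate, plus the $G$-invariance of $NL_p$), and Fatou's lemma gives at least $8\pi$ of $p\int|u_p^+|^{p+1}$ in each ball. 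Hence
\[
8\pi|G|+8\pi e\;\leq\; p\int_{\mathcal N_p^+}|u_p^+|^{p+1}dx+p\int_{\mathcal N_p^-}|u_p^-|^{p+1}dx+o_p(1)\;\overset{\eqref{assumptionEnergyINIZIO}}{\leq}\;8\pi e\,\alpha+o_p(1)<40\pi e,
\]
i.e.\ $|G|<(\alpha-1)e<4e$, contradicting $|G|\geq 4e$. This accounting (each same-sign bubble worth only $8\pi$, the whole opposite-sign part worth only $8\pi e$) is precisely why the hypotheses are calibrated as $|G|\geq 4e$ and $\alpha<5$; your first step, combined with this observation instead of the attempted doubling, closes the proof.
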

\begin{proof}
W.l.g. we can assume that for each $i=1,\ldots,k$ either $(x_{i,p})_p\subset \mathcal{N}_p^{+}$ or $(x_{i,p})_p\subset \mathcal{N}_p^{-}$. We prove the result in the case $(x_{i,p})_p\subset \mathcal{N}_p^{+}$, the other case being similar. Moreover in order to simplify the notation we drop the dependence on $i$ namely we set
$x_{p}:=x_{i,p}$ and $\mu_{p}:=\mu_{i,p}$.
\\
Let $h:=|G|$ and let us denote by $g^j$, $j=0,\dots, h-1$, the elements of $G$. We consider the rescaled nodal domains
\[
\widetilde{\mathcal{N}_{p}}^{j,+} :=\{x\in\mathbb R^2\ : \ \mu_p x +g^jx_p\in \mathcal N_p^+\}
,\ \ j=0,\dots, h-1,\]
 and the rescaled functions $z_{p}^{j,+}(x): \widetilde{\mathcal{N}_{p}}^{j,+}\rightarrow\R$ defined by
\begin{equation}\label{z_j} z_{p}^{j,+}(x):=\frac{p}{u_{p}^+(x_{p})}\left( u_{p}^+(\mu_{p} x+g^jx_{p})-u_{p}^+(x_{p}) \right), \ \ j=0,\dots, h-1.\end{equation}
 Hence it's not difficult to see (as in Corollary \ref{cor:nonvedoNL}) that each $z_{p}^{j,+}$ converges to $U$ in $C^2_{loc}(\mathbb R^2)$ as $p\rightarrow \infty$, where $U$ is the function in \eqref{v0}.\\
Assume by contradiction that there exists a sequence $p_n\rightarrow +\infty$ as $n\rightarrow +\infty$ such that $\frac{|x_{p_n}|}{\mu_{p_n}}\rightarrow + \infty$.  Then, since the $h$ distinct points $g^j x_{p_n}$, $j=0,\ldots, h-1$, are the vertex of a regular polygon centered in $O$,   $d_n:=|g^j x_{p_n}-g^{j+1}x_{p_n}|=2\widetilde d_n \sin{\frac{\pi}{h}}$, where $\widetilde d_n:=|g^jx_{p_n}|$, $j=0,..,h-1$,  and so we also have that $\frac{d_n}{\mu_{p_n}}\rightarrow +\infty$ as $n\rightarrow +\infty$.
Let \begin{equation}\label{R_n}R_{n}:=\min\left\{\frac{d_n}{3},\frac{d(x_{p_n},\partial\Omega)}{2},\frac{d(x_{p_n},NL_{p_n})}{2}\right\},
\end{equation}
then  by construction  $B_{R_n}(g^j x_{p_n})\subseteq \mathcal{N}_{p_n}^+$ for $j=0,\dots,h-1$,
\begin{equation}\label{palleDisgiunte} B_{R_n}(g^j x_{p_n})\cap B_{R_n}(g^l x_{p_n}) =\emptyset,\ \ \mbox{ for }j\neq l
\end{equation}
and
\begin{equation}\label{invadeR2}
\frac{R_n}{\mu_{p_n}}\rightarrow  +\infty\quad \mbox{ as $n\rightarrow +\infty$.}
\end{equation}
Using \eqref{invadeR2}, the convergence of $z_{p_n}^{j,+}$ to $U$,  \eqref{RemarkMaxCirca1} and Fatou's lemma, we have
\begin{eqnarray}\label{betterEstimate}
8\pi &=&\int_{\R^2}e^{U}\,dx \leq \lim_n \int_{B_{\frac{R_n}{\mu_{p_n}}}(0)}\left|1+\frac{z_{p_n}^{j,+}}{p_n} \right|^{(p_n+1)}dx =\lim_n \frac{p_n}{\left|u_{p_n}^+(x_{p_n})\right|^2}  \int_{B_{R_n}(g^jx_{p_n})} \left| u_{p_n}^+\right|^{(p_n+1)}dx\nonumber\\
&\stackrel{\eqref{RemarkMaxCirca1}}{\leq}&
 \lim_n p_n\int_{B_{R_n}(g^jx_{p_n})} \left| u_{p_n}^+\right|^{(p_n+1)}dx.
\end{eqnarray}
Summing on $j=0,\dots, h-1$, using \eqref{palleDisgiunte}, \eqref{assumptionEnergyINIZIO}, \eqref{energylimitLowerPosNeg} and \eqref{energiaSuSoluzioni} we get:
\begin{eqnarray*}
h\cdot 8\pi
&\leq &
\lim_n\ p_n \sum_{j=0}^{h-1} \int_{B_{R_n}(g^jx_{p_n})} \left| u_{p_n}^+\right|^{(p_n+1)}dx
\stackrel{\mbox{\eqref{palleDisgiunte}}}{\leq}
\lim_n\   p_n\int_{\mathcal{N}_{p_n}^+} \left| u_{p_n}^+\right|^{(p_n+1)}dx
\\
&= &
 \lim_n\left(   p_n\int_{\Omega} \left| u_{p_n}\right|^{(p_n+1)}dx-\   p_n\int_{\mathcal{N}_{p_n}^-} \left| u_{p_n}^-\right|^{(p_n+1)}dx\right)
\stackrel{\eqref{assumptionEnergyINIZIO} + \eqref{energylimitLowerPosNeg}}\leq 
  \left(\alpha-1\right)\ \cdot 8\pi e
\stackrel{\alpha<5}< 
\  4 \ \cdot 8\pi e,
  \end{eqnarray*}
  which  contradicts the assumption $|G|\geq 4 e$.
  \end{proof}

\begin{remark}\label{rem:buonNormaInfinitoAbbassaSimmetria} 
If we knew that $\|u_p\|_{\infty}\geq \sqrt{e}$, then we would obtain a better estimate in \eqref{betterEstimate}, and so Proposition \ref{MaxVaazero} would hold under the weaker symmetry assumption $|G|\geq 4$  (recall that $|G|\geq 4$ is the assumption under which one can prove Theorem \ref{thm:existenceINTRO}).
\end{remark}

It is also possible to prove the following (see \cite[Corollary 3.5]{DeMarchisIanniPacellaJEMS} for more details):
\begin{corollary} \label{regioneNodaleInterna} Under the assumptions of Theorem \ref{TeoremaPrincipaleCasoSimmetrico}
\begin{itemize}
\item[$(i)$] $O\in \mathcal{N}_p^+$ for $p$ large.
\item[$(ii)$] Let $i\in\{1,\dots,k\}$ then $x_{i,p}\in\mathcal{N}_p^+$ for  $p$ large.
\end{itemize}
\end{corollary}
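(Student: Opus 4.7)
The plan is to deduce both assertions from a simple scale comparison between two quantities attached to each $x_{i,p}$: the distance from the origin and the distance from the nodal line $NL_p$.

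For part $(i)$, the key input is that $x_{1,p}=x_p^+\in\mathcal N_p^+$ by our convention. Proposition \ref{MaxVaazero} yields $|x_{1,p}|/\mu_{1,p}=O(1)$, while Corollary \ref{cor:nonvedoNL} guarantees $\mathrm{dist}(x_{1,p},NL_p)/\mu_{1,p}\to+\infty$. Dividing the first rate by the second I get
\[
\frac{|O-x_{1,p}|}{\mathrm{dist}(x_{1,p},NL_p)}\ \longrightarrow\ 0\quad\text{as }p\to+\infty,
\]
so for all $p$ large the origin lies strictly inside the open disk centered at $x_{1,p}$ of radius $\mathrm{dist}(x_{1,p},NL_p)$. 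Since $u_p$ has constant sign on that disk (its boundary is the nearest portion of $NL_p$), and is positive at $x_{1,p}$, we conclude $u_p(O)>0$, i.e.\ $O\in\mathcal N_p^+$.

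For part $(ii)$, I would repeat the identical comparison with an arbitrary $x_{i,p}$, $i=1,\dots,k$, in place of $x_{1,p}$: Proposition \ref{MaxVaazero} bounds $|x_{i,p}|/\mu_{i,p}$, and Corollary \ref{cor:nonvedoNL} provides $\mathrm{dist}(x_{i,p},NL_p)/\mu_{i,p}\to+\infty$, so the ball of radius $\mathrm{dist}(x_{i,p},NL_p)$ about $x_{i,p}$ contains $O$ for $p$ large. Hence $O$ and $x_{i,p}$ sit in the same connected component of $\Omega\setminus NL_p$, and combining this with part $(i)$ forces $x_{i,p}\in\mathcal N_p^+$.

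I do not foresee a genuine obstacle: the whole corollary is a one-line consequence of the two rate estimates already proved. The only point to keep in mind is that Corollary \ref{cor:nonvedoNL} is stated for every index $i\in\{1,\ldots,k\}$ (not just $i=1$), so that the same disk argument can be run at each concentration sequence without modification.
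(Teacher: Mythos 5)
Your argument is correct and is essentially the intended one: the paper omits the proof here, deferring to \cite{DeMarchisIanniPacellaJEMS}, and the comparison $|x_{i,p}|\leq C\mu_{i,p}=o\bigl(\mathrm{dist}(x_{i,p},NL_p)\bigr)$ coming from Proposition \ref{MaxVaazero} and Corollary \ref{cor:nonvedoNL} is exactly what is needed, for $i=1$ (giving $(i)$ since $u_p(x_p^+)>0$) and then for general $i$ (giving $(ii)$). One cosmetic caveat: the full disk of radius $\mathrm{dist}(x_{i,p},NL_p)$ centered at $x_{i,p}$ need not be contained in $\Omega$, so it is cleaner to join $O$ to $x_{i,p}$ by the segment $[O,x_{i,p}]$, which for $p$ large lies in $\Omega$ (both endpoints tend to the interior point $O$) and avoids $NL_p$ because every point of it is within distance $|x_{i,p}|<\mathrm{dist}(x_{i,p},NL_p)$ of $x_{i,p}$; this connected set then lies in a single nodal domain, which yields the same conclusion.
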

%
%
%
%
%
\begin{proposition}\label{prop:bark=1} Under the assumptions of Theorem \ref{TeoremaPrincipaleCasoSimmetrico},
the maximal number $k$  of families of points $(x_{i,p})$, $i=1,\ldots, k$, for which $(P^k_1)$, $(P^k_2)$ and  $(P^k_3)$ hold is $1$.
\end{proposition}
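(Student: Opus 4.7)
The plan is to argue by contradiction, reducing $k\geq 2$ to a direct clash between the smallness of the concentration points (established by Proposition \ref{MaxVaazero}) and the separation mandated by property $(\mathcal{P}_1^k)$. Concretely, I would suppose that there exist at least two families $(x_{1,p})$ and $(x_{2,p})$ among those provided by Theorem \ref{thm:x1N}. Then $(\mathcal{P}_1^2)$ would force
\[
\lim_{p\to+\infty}\frac{|x_{1,p}-x_{2,p}|}{\mu_{1,p}}=+\infty\quad\text{and}\quad\lim_{p\to+\infty}\frac{|x_{1,p}-x_{2,p}|}{\mu_{2,p}}=+\infty.
\]

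The contradiction comes from the bound in Proposition \ref{MaxVaazero}, which (up to a subsequence) furnishes a constant $M>0$ with $|x_{i,p}|\leq M\mu_{i,p}$ for $i=1,2$. After relabelling if needed so that $\mu_{1,p}\leq \mu_{2,p}$, the triangle inequality gives
\[
|x_{1,p}-x_{2,p}|\;\leq\;|x_{1,p}|+|x_{2,p}|\;\leq\;M(\mu_{1,p}+\mu_{2,p})\;\leq\;2M\mu_{2,p},
\]
so $|x_{1,p}-x_{2,p}|/\mu_{2,p}$ stays bounded, directly contradicting the divergence required above. Hence $k=1$.

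I do not anticipate any serious obstacle in carrying out this deduction: the heavy lifting has already been done in Proposition \ref{MaxVaazero}, where the $G$-symmetry, the energy bound $\alpha<5$ and the hypothesis $|G|\geq 4e$ are combined to confine every concentration point $x_{i,p}$ to an $O(\mu_{i,p})$-neighbourhood of the origin. Once this confinement is in force, no two of these points can be separated at a scale larger than $\max(\mu_{1,p},\mu_{2,p})$, which is precisely what $(\mathcal{P}_1^2)$ would demand. The brevity of the argument highlights that the symmetry and energy assumptions of Theorem \ref{TeoremaPrincipaleCasoSimmetrico} enter into the proof of $k=1$ only through Proposition \ref{MaxVaazero}.
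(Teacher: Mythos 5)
Your proof is correct and follows essentially the same route as the paper: contradiction via Proposition \ref{MaxVaazero} plus the triangle inequality against $(\mathcal{P}_1^2)$. The only cosmetic difference is that the paper avoids your relabelling step by noting that $x_{1,p}=x_p^+$ forces $\mu_{1,p}=\mu_p^+\leq\mu_{j,p}$ automatically, so it divides by $\mu_{j,p}$ directly.
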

\begin{proof}
Let us assume by contradiction that $k > 1$ and set $x^+_p=x_{1,p}$. For a family $(x_{j,p})$, $j\in\{2,\ldots, k\}$ by  Proposition \ref{MaxVaazero}, there exists $C>0$ such that
\[
\frac{|x_{1,p}|}{\mu_{1,p}}\leq C\quad\textrm{and}\quad\frac{|x_{j,p}|}{\mu_{j,p}}\leq C.
\]
Thus, since by definition $\mu^+_p=\mu_{1,p}\leq \mu_{j,p}$, also
\[
\frac{|x_{1,p}|}{\mu_{j,p}}\leq C.
\]
Hence
\[
\frac{|x_{1,p}-x_{j,p}|}{\mu_{j,p}}\leq\frac{|x_{1,p}|+|x_{j,p}|}{\mu_{j,p}}\leq C,
\]
which  contradicts $(\mathcal{P}_1^k)$ when $p\rightarrow +\infty$.
\end{proof}

Then we easily get
\begin{proposition}\label{prop:MinVaazero}
 Under the assumptions of Theorem \ref{TeoremaPrincipaleCasoSimmetrico} there exists $C>0$ such that
\begin{equation}\label{boundDaQ1}
\frac{|x_p|}{\mu (x_p)}\leq C
\end{equation}
for any family $(x_p)_p\subset \Omega$,  where $\mu (x_p)$ is defined as in \eqref{defmup}.
In particular, since by  \eqref{mumenozero} $\mu_p^-\rightarrow 0$, then $|x_p^-|\rightarrow 0$.
\end{proposition}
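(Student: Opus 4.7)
The plan is to combine the bound on the maximum point $x_p^+$ from Proposition \ref{MaxVaazero} with the decay estimate $(\mathcal{P}_3^k)$, which, thanks to Proposition \ref{prop:bark=1}, reduces to $(\mathcal{P}_3^1)$ with center $x_p^+=x_{1,p}$. The only subtle point is to replace $\mu_p^+$ by $\mu(x_p)$ on the right-hand side, which uses the maximality of $x_p^+$.

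First I would apply $(\mathcal{P}_3^1)$ at the point $x_p$: there exists $C>0$, independent of $p$, such that
\[
p\,|x_p-x_p^+|^2\,|u_p(x_p)|^{p-1}\leq C,
\]
which by the definition \eqref{defmup} of $\mu(x_p)$ rewrites as
\[
\frac{|x_p-x_p^+|}{\mu(x_p)}\leq\sqrt{C}.
\]
Next, since $x_p^+$ is a point where $|u_p|$ attains its maximum, $|u_p(x_p^+)|\geq|u_p(x_p)|$, and hence by definition of $\mu_p^+=\mu(x_p^+)$ we have the key inequality
\[
\mu_p^+\leq\mu(x_p).
\]
By Proposition \ref{MaxVaazero} applied to the single family $(x_{1,p})=(x_p^+)$, we know $|x_p^+|/\mu_p^+\leq C'$, so that
\[
\frac{|x_p^+|}{\mu(x_p)}\leq\frac{|x_p^+|}{\mu_p^+}\leq C'.
\]
Combining the two bounds via the triangle inequality gives
\[
\frac{|x_p|}{\mu(x_p)}\leq\frac{|x_p-x_p^+|}{\mu(x_p)}+\frac{|x_p^+|}{\mu(x_p)}\leq\sqrt{C}+C',
\]
which is \eqref{boundDaQ1}.

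For the final assertion, I would simply specialize to $x_p=x_p^-$: then $\mu(x_p^-)=\mu_p^-$, so \eqref{boundDaQ1} yields $|x_p^-|\leq C\,\mu_p^-$, and by \eqref{mumenozero} we conclude $|x_p^-|\to 0$. No step here looks genuinely hard; the only thing to be careful about is the comparison $\mu_p^+\leq\mu(x_p)$, which is what allows us to use the already-established bound on $x_p^+$ (relative to its own scale $\mu_p^+$) in order to control it against the a priori larger scale $\mu(x_p)$.
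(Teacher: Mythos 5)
Your proof is correct and follows essentially the same route as the paper: apply $(\mathcal{P}_3^1)$ (available since $k=1$ by Proposition \ref{prop:bark=1}) to bound $|x_p-x_p^+|/\mu(x_p)$, use the maximality of $|u_p(x_p^+)|$ to get $\mu_p^+\leq\mu(x_p)$, and invoke Proposition \ref{MaxVaazero} for $|x_p^+|/\mu_p^+$ before concluding by the triangle inequality. The point you flag as the only delicate one, namely $\mu_p^+\leq\mu(x_p)$, is exactly the step the paper uses (stated there simply as ``by definition'').
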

\begin{proof}
\eqref{boundDaQ1}  holds for $x_{p}^+$ by Proposition \ref{MaxVaazero}. Moreover $k=1$ by Proposition \ref{prop:bark=1}, so applying $(\mathcal{P}_3^1)$ to the points $(x_p)$, for $x_p\neq x_p^+$, we have
\[
\frac{|x_p-x_{p}^+|}{\mu (x_p)}\leq C.
\]
By definition, $\mu_p^+\leq \mu (x_p)$, hence we get
\[
\frac{|x_p|}{\mu (x_p)}\leq \frac{|x_p-x_{p}^+|}{\mu (x_p)}+\frac{|x_{p}^+|}{\mu (x_p)}\leq\frac{|x_p-x_{p}^+|}{\mu (x_p)}+\frac{|x_{p}^+|}{\mu_p^+}\leq C.
\]
\end{proof}

\begin{lemma}\label{lemma:MaxVaazeroVelocemente}  Let the  assumptions of Theorem \ref{TeoremaPrincipaleCasoSimmetrico} be satisfied and  let $(x_p)\subset\Omega$ be such that $p|\upp(x_p)|^{p-1}\to +\infty$ and $\mu (x_p)$ be as in \eqref{defmup}. Assume also that the rescaled functions $v_p(x):=\fr{p}{\upp(x_{p})}(\upp(x_{p}+\mu (x_{p}) x)-\upp(x_{p}))$ converge to $U$ in $C^2_{loc}(\R^2\setminus\{-\lim_{p}\tfrac{x_p}{\mu (x_p)}\})$ as $p\rightarrow +\infty$ ($U$ as in \eqref{v0}). Then
\begin{equation}\label{limiteZero}
\frac{|x_p|}{\mu (x_p)}\to 0\ \ \mbox{ as }\ p\rightarrow +\infty.
\end{equation}
As a byproduct we deduce that $v_p\rightarrow U$ in $C^2_{loc}(\mathbb{R}^2\setminus\{0\})$, as $p\rightarrow +\infty$.
\end{lemma}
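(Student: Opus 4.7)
From Proposition \ref{prop:MinVaazero} we already know that $|x_p|/\mu(x_p)\leq C$, so the problem reduces to excluding the possibility that, along some subsequence, $x_p/\mu(x_p)\to \bar x$ with $\bar x\neq 0$. The strategy will be to assume by contradiction that such a nonzero $\bar x$ exists and then exploit the $G$-symmetry of $u_p$ to produce several distinct zeros of the limit function $U$, contradicting the fact (from \eqref{v0}) that $U$ vanishes only at the origin.

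To do this I would fix a generator $g$ of the cyclic group $G$ (a rotation of angle $2\pi/h$, with $h=|G|\geq 4e$) and, for each $j=1,\ldots,h-1$, consider the rescaled point
\[
z_{j,p}:=\frac{g^j x_p-x_p}{\mu(x_p)}.
\]
Since $u_p(g^j x_p)=u_p(x_p)$ by $G$-invariance, a direct substitution in the definition of $v_p$ gives $v_p(z_{j,p})=0$, while $z_{j,p}\to g^j\bar x-\bar x$ as $p\to+\infty$.

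The central check is then that each of these limit points lies in the set $\R^2\setminus\{-\bar x\}$ on which the assumed $C^2_{loc}$-convergence $v_p\to U$ holds: this is equivalent to $g^j\bar x\neq 0$, which is automatic because $\bar x\neq 0$ and $g^j$ is an isometry. Passing to the limit we would obtain $U(g^j\bar x-\bar x)=0$ for every $j=1,\ldots,h-1$; but \eqref{v0} shows that $U$ vanishes exactly at the origin, so $g^j\bar x=\bar x$ for all such $j$, forcing $\bar x=0$ because $g$ is a nontrivial rotation. This is the contradiction, and the byproduct in the last sentence follows immediately: with $x_p/\mu(x_p)\to 0$ the excluded singular point in the originally assumed convergence becomes the origin itself. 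I do not foresee any real obstacle; the only subtle step is the verification that $g^j\bar x-\bar x\neq -\bar x$, which is precisely what guarantees that the hypothesized $C^2_{loc}$-convergence can be evaluated at the candidate zeros.
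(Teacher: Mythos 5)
Your argument is correct and follows essentially the same route as the paper: both proofs exploit the $G$-invariance to observe that $v_p$ vanishes at the rescaled points $(g^j x_p-x_p)/\mu(x_p)$, check that the limit point avoids the excluded singularity $-\bar x$, and then derive a contradiction with the fact that $U$ vanishes only at the origin. The only cosmetic difference is that the paper selects one rotation $g$ with $|x_p-gx_p|=C_g|x_p|$, $C_g>1$, so that the limit point is directly seen to be nonzero and $U<0$ there, whereas you conclude $g^j\bar x=\bar x$ for all $j$ and invoke the fact that a nontrivial rotation about the origin fixes only the origin; both steps are sound.
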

\begin{proof}
%
%

%
By Proposition \ref{prop:MinVaazero} we know that $\frac{|x_p|}{\mu (x_p)}\leq C$. Assume by contradiction that $\fr{|x_p|}{\mu (x_p)}\to\ell>0$. Let $g\in G$ such that  $|x_{p}-g x_{p}|=C_g|x_{p}|$ with constant $C_g> 1$ (such a $g$ exists because $G$ is a group of rotation about the origin).
Hence
\[
\frac{|x_{p}-g x_{p}|}{\mu (x_{p})}=C_g\frac{|x_{p}|}{\mu (x_{p})}\rightarrow C_g\ell > \ell.
\]
Then $x_{0}:=\lim_{p\rightarrow +\infty} \fr{g x_p-x_p}{\mu (x_p)} \in\R^2\setminus\{-\lim_{p}\tfrac{x_p}{\mu (x_p)}\}$ and so by the $C^2_{loc}$ convergence we get
\[
v_{p}(\fr{g x_p-x_p}{\mu (x_p)})\to U(x_0)<0\quad\textrm{as $p\to+\infty$}.\]
On the other side, for any $g\in G$, one also has
\[
v_{p}(\fr{gx_{p}-x_{p}}{\mu (x_{p})})=\fr{p}{\upp(x_{p})}(\upp(gx_{p})-\upp(x_{p}))=0,
\]
by the symmetry of $u_p$ and this gives a contradiction.
\end{proof}

\

\

\subsection{Asymptotic analysis about the minimum points $x_p^-$ and study of $NL_p$}
\label{Subection:GSymmetricMin}

\

Proposition \ref{prop:bark=1} implies that $(\mathcal{P}_3^1)$ holds, from which 
\begin{equation}\label{minimVaAZerotilde}
\frac{|x_p^+-x_p^-|}{\mu_p^-}\leq C.
\end{equation}
with $\mu_p^-$ as in \eqref{mumenozero}.  Moreover, since we already know that $\frac{d(x_p^+,NL_p)}{\mu_p^+}\rightarrow +\infty$ as $p\rightarrow +\infty,$ we deduce that $\frac{|x_p^+-x_p^-|}{\mu_p^+}\rightarrow +\infty$ as $ p\rightarrow +\infty$, and in turn by \eqref{minimVaAZerotilde} we get
\begin{equation}
\label{rapportoMu}
\frac{\mu_p^+}{\mu_p^-}\rightarrow 0 \ \ \mbox{ as }\ p\rightarrow +\infty.
\end{equation}

Note that \eqref{minimVaAZerotilde} and \eqref{rapportoMu} more generally hold for any family of points $(x_p)$ such that $u_p(x_p)<0$ and $p|u_p(x_p)|^{p-1}\rightarrow +\infty$.\\
By Proposition \ref{prop:MinVaazero} we  have
\bel\label{minimVaAZero}
 \frac{|x_p^-|}{\mu_p^-}\leq C,
\eel
so there are two possibilities: either $\fr{|x_p^-|}{\mu_p^-}\to\ell>0$ or $\fr{|x_p^-|}{\mu_p^-}\to0$ as $p\rightarrow +\infty$, up to subsequences. We will exclude the latter case. We start with a preliminary result:
\begin{lemma} \label{Lemma:scalingPreliminareOrigine}
For $x\in \frac{\Omega}{|x_p^-|}:=\{y\in\mathbb R^2\ :\ y |x_p^-|\in\Omega \}$ let us define the rescaled function
\[
w_p^-(x):=\frac{p}{u_p(x_p^-)}\left(u_p(|x_p^-|x) -u_p(x_p^-)\right).
\]
Then
\begin{equation}
\label{v1pv0versione2}
w_p^-\rightarrow \gamma\ \mbox{  in }\ C^2_{loc}(\R^2\setminus\{0\})\ \mbox{ as }\ p\rightarrow +\infty,
\end{equation}
 where $\gamma\in C^2(\R^2\setminus\{0\})$, $\gamma\leq0$, $\gamma(x_{\infty})=0$ for a point $x_{\infty}\in\partial B_1(0)$ and it is a solution to
 \[
 -\Delta \gamma =\ell^2 e^{\gamma} \ \mbox{ in }\ R^2\setminus\{0\}.
 \]
In particular $\gamma\equiv 0$ when  $\ell =0$.
\end{lemma}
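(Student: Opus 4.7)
The plan is to derive the equation for $w_p^-$, obtain uniform $C^2$-bounds on compacts $K\subset\R^2\setminus\{0\}$, then extract a subsequential limit and analyse the case $\ell=0$ separately. A direct computation based on $-\Delta u_p=|u_p|^{p-1}u_p$ and the identity $(\mu_p^-)^{-2}=p|u_p(x_p^-)|^{p-1}$ yields
\begin{equation*}
-\Delta w_p^-(x)=\frac{|x_p^-|^2}{(\mu_p^-)^2}\left|1+\frac{w_p^-(x)}{p}\right|^{p-1}\left(1+\frac{w_p^-(x)}{p}\right)\qquad\text{on }\Omega/|x_p^-|,
\end{equation*}
whose prefactor tends to $\ell^2$. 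Since $u_p(x_p^-)<0$ is the global minimum of $u_p$, one has $w_p^-\leq 0$, and by construction $w_p^-(x_{\infty,p})=0$ at the point $x_{\infty,p}:=x_p^-/|x_p^-|\in\partial B_1(0)$; along a subsequence $x_{\infty,p}\to x_\infty\in\partial B_1(0)$. Moreover Proposition \ref{prop:MinVaazero} gives $|x_p^-|\to 0$, so any compact $K\subset\R^2\setminus\{0\}$ is contained in $\Omega/|x_p^-|$ for $p$ large.

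The key step is a uniform $L^\infty$-bound on $\Delta w_p^-$ on $K$. I would rewrite the right-hand side of the equation as $|x_p^-|^2\,p|u_p(y)|^{p-1}\,\tfrac{u_p(y)}{u_p(x_p^-)}$ with $y=|x_p^-|x$; the factor $u_p(y)/u_p(x_p^-)$ is uniformly bounded since $\|u_p\|_\infty\leq C$ by Theorem \ref{teo:BoundEnergia}\emph{(iii)} and $\|u_p^-\|_\infty\geq \lambda_1(\Omega)^{1/(p-1)}\to 1$ by Theorem \ref{teo:BoundEnergia}\emph{(i)}. Applying $(\mathcal P_3^1)$, which holds with $k=1$ by Proposition \ref{prop:bark=1}, yields $p|u_p(y)|^{p-1}\leq C/|y-x_p^+|^2$, so the problem reduces to bounding $|x_p^-|^2/|y-x_p^+|^2$ on $K$. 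Using $|x_p^+|\leq C\mu_p^+$ from Proposition \ref{MaxVaazero} together with \eqref{rapportoMu}, one distinguishes the case in which $|x_p^+|/|x_p^-|$ stays bounded (so that $|y-x_p^+|\geq c_K|x_p^-|$ on $K$) from the case in which, along a subsequence, $|x_p^+|/|x_p^-|\to\infty$ (so that $|y-x_p^+|\sim|x_p^+|$ and $|x_p^-|^2/|y-x_p^+|^2\to 0$); in both cases $\|\Delta w_p^-\|_{L^\infty(K)}\leq C_K$.

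Given the Laplacian bound together with $w_p^-\leq 0$ and $w_p^-(x_{\infty,p})=0$, I would split $w_p^-=\varphi_p+\psi_p$ on a ball containing $K\cup\{x_{\infty,p}\}$, with $\varphi_p$ the Newtonian potential of $-\Delta w_p^-$ (uniformly bounded by elliptic $L^\infty$ theory) and $\psi_p$ harmonic; then $\psi_p$ is bounded above (since $w_p^-\leq 0$) and pinned at $x_{\infty,p}$, so Harnack's inequality yields a uniform lower bound on $\psi_p$ and hence on $w_p^-$ on $K$. Standard elliptic regularity upgrades these to $C^{2,\alpha}$-bounds, so extracting a subsequential limit $w_p^-\to\gamma$ in $C^2_{loc}(\R^2\setminus\{0\})$ and passing to the limit in the rescaled equation produces a solution $\gamma$ of $-\Delta\gamma=\ell^2 e^\gamma$ on $\R^2\setminus\{0\}$ with $\gamma\leq 0$ and $\gamma(x_\infty)=0$. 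When $\ell=0$ the limit is a non-positive harmonic function on $\R^2\setminus\{0\}$ attaining its maximum $0$ at the interior point $x_\infty\in\partial B_1(0)$, so the strong maximum principle forces $\gamma\equiv 0$.

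The main obstacle is the uniform Laplacian estimate on compacts of $\R^2\setminus\{0\}$: the ``natural'' rescaling about the minimum $x_p^-$ would be by $\mu_p^-$, not by $|x_p^-|$, and the only pointwise control on $u_p$ available away from the bubble at $x_p^+$ is $(\mathcal P_3^1)$, whose quadratic decay in $|y-x_p^+|$ becomes critical exactly at the scale $|x_p^-|$ we rescale by. The case-by-case analysis of the ratio $|x_p^+|/|x_p^-|$, made possible by the fine information on the scales $\mu_p^\pm$ and on the relative position of $x_p^+$ and $x_p^-$ extracted in Section \ref{Subsection:GSymmetricMax}, is what makes the estimate go through, and the dichotomy $\ell>0$ vs.\ $\ell=0$ in the statement reflects exactly these two regimes.
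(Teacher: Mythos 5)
Your overall architecture coincides with the paper's: rescale by $|x_p^-|$, prove a uniform bound on $\Delta w_p^-$ on compact subsets of $\R^2\setminus\{0\}$, use the sign condition $w_p^-\le 0$ together with the normalisation $w_p^-(x_p^-/|x_p^-|)=0$ and a harmonic-part/Harnack argument to get local bounds, upgrade to $C^{2,\alpha}_{loc}$, pass to the limit in the equation, and apply the strong maximum principle when $\ell=0$. The derivation of the equation, the boundedness of the factor $u_p(y)/u_p(x_p^-)$, and the $\ell=0$ step are all correct and match the paper.

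The gap is in your justification of the Laplacian bound. Having reduced to bounding $|x_p^-|^2\,p|u_p(y)|^{p-1}$ with $y=|x_p^-|x$, you invoke $(\mathcal P_3^1)$, whose decay is measured from $x_p^+$, and then claim that when $|x_p^+|/|x_p^-|$ stays bounded one has $|y-x_p^+|\ge c_K|x_p^-|$ on $K$. This implication is false as stated: if $x_p^+/|x_p^-|$ converges along a subsequence to a point $q\neq 0$ and $K$ contains $q$, then $|y-x_p^+|=|x_p^-|\left|x-x_p^+/|x_p^-|\right|$ can be arbitrarily small compared with $|x_p^-|$, and $(\mathcal P_3^1)$ gives no control there. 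The paper sidesteps this by applying Proposition \ref{prop:MinVaazero} to the points $\xi_p=|x_p^-|x$ themselves: that proposition (whose proof already combines $(\mathcal P_3^1)$ with the symmetry information $|x_p^+|/\mu_p^+\le C$ from Proposition \ref{MaxVaazero}) yields $p\,|\xi_p|^2|u_p(\xi_p)|^{p-1}\le C$, i.e.\ quadratic decay measured from the \emph{origin}, which immediately gives $|{-\Delta w_p^-(x)}|\le c_\infty C^2/|x|^2$ on all of $\Omega/|x_p^-|$. Alternatively your dichotomy can be repaired by first proving $|x_p^+|/|x_p^-|\to 0$ (since $O\in\mathcal N_p^+$ there is a nodal point $z_p=t_px_p^-$ with $t_p\in(0,1)$, so $d(x_p^+,NL_p)\le|x_p^+|+|x_p^-|$, and Corollary \ref{cor:nonvedoNL} together with $|x_p^+|/\mu_p^+\to0$ forces $|x_p^-|/\mu_p^+\to+\infty$); but some such additional input is needed, as boundedness of the ratio alone does not suffice. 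A second, minor, imprecision: the harmonic decomposition should be performed on annuli $B_R(0)\setminus B_{1/R}(0)$ (which contain $x_p^-/|x_p^-|$ for $R>1$), not on a ball containing $K\cup\{x_{\infty,p}\}$, since such a ball may contain the origin, where no bound on $\Delta w_p^-$ is available.
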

\begin{proof}
\eqref{minimVaAZero} implies that $|x_p^-|\rightarrow 0$ as $p\rightarrow +\infty$, so it follows that the set $\frac{\Omega}{|x_p^-|}\to\R^2$ as $p\to+\infty$.

By definition we have
\begin{equation}\label{conB}
w_p^-\leq 0,\  \ \ \  w_p(\frac{x_p^-}{|x_p^-|})=0 \ \ \ \mbox{ and } \ \ \ w_{p}^-=-p\quad\textrm{on $\partial \left( \frac{\Omega}{|x_p^-|} \right).$}
\end{equation}
Moreover, for $x\in \frac{\Omega}{|x_p^-|}$ we define $\xi_p:= |x_p^-|x$ and $\mu_{\xi_p}$ as $\mu_{\xi_p}^{-2}:= p|u_p(\xi_p)|^{p-1}$. Thanks to \eqref{problem1} we then have
\begin{equation}
\label{eq:RiscLineaNod}
|-\Delta w_p^-(x) |= \frac{p|x_p^-|^2|\upp(\xi_p)|^p}{|\upp(x^-_p)|}= \frac{|\upp(\xi_p)|}{|\upp(x^-_p)|}\,\frac{|x^-_p|^2}{\mu_{\xi_p}^2}\leq c_\infty \frac{|x^-_p|^2}{\mu_{\xi_p}^2},
\end{equation}
where  $c_\infty:=\lim_p\|\upp\|_\infty$.
Then, observing that $\frac{|x_p^-|}{\mu_{\xi_p}}\leq \frac{C}{|x|}$  by  Proposition \ref{prop:MinVaazero} applied to $\xi_p$, we have
\be
|-\Delta w_p^-(x) |
\leq \frac{c_\infty C^2}{|x|^2}.
\ee

Namely for any $R>0$
\begin{equation}\label{boundlaplacianonew}|-\Delta w_p^-|\leq c_\infty C^2R^2\ \ \  \  \mbox{ in } \frac{\Omega}{|x_p^-|}\setminus B_{\frac{1}{R}}(0).\end{equation}

So, similarly as in the proof of Theorem \ref{teo:BoundEnergia}-\emph{$(ii)$} (using now that  $w_p^-(\frac{x_p^-}{|x_p^-|})=0$), it follows that  for any $R>1$ ($\frac{x_p^-}{|x_p^-|}\in \partial B_1(0)\subset B_R(0)\setminus B_{\frac{1}{R}}(0)$ for $R>1$), $w_p^-$ is uniformly bounded in $B_R(0)\setminus B_{\frac{1}{R}}(0)$.

After passing to a subsequence, standard elliptic theory applied to the following equation
\bel\label{w-p}
-\lap w^-_p(x)=\frac{|x^-_p|^2}{(\mu^-_p)^2}\left(1+\frac{w^-_p(x)}{p}\right)\left|1+\frac{w^-_p(x)}{p}\right|^{p-1}
\eel
gives that $w_{p}^-$ is bounded in $ C^{2,\alpha}_{loc}(\mathbb R^2\setminus\{0\})$ .
Hence \eqref{v1pv0versione2} and the properties of $\gamma$ follow.
\\

In particular when $\ell =0$ it follows that $\gamma$ is harmonic in $\R^2\setminus\{0\}$ and $\gamma(x_\infty)=0$ for some point $x_\infty\in\partial B_1(0)$, therefore by the maximum principle we obtain $\gamma\equiv0$.
\end{proof}

\

\

\begin{proposition}\label{prop:NLp+l>0}
There exists $\ell>0$ such that
\[
\fr{|x^-_p|}{\mu_p^-}\to\ell \ \ \ \mbox{ as }\ p\rightarrow +\infty.
\]
\end{proposition}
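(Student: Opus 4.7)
\smallskip

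The plan is to proceed by contradiction: we already know from Proposition \ref{prop:MinVaazero} that $|x_p^-|/\mu_p^-$ is bounded, so up to a subsequence it converges to some $\ell\in[0,C]$. Assume, for contradiction, that $\ell=0$. By Lemma \ref{Lemma:scalingPreliminareOrigine} (in the $\ell=0$ case), this implies $w_p^-\to 0$ in $C^2_{loc}(\mathbb{R}^2\setminus\{0\})$. I would then extract a contradiction from a divergence-theorem computation on a small ball centered at the origin.

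\smallskip

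The first key step is to \emph{localize $\mathcal{N}_p^+$}. Fix $R>1$. Uniform $C^2$ convergence $w_p^-\to 0$ on the compact annulus $\{1/R\leq |y|\leq R\}$ shows, via $u_p(|x_p^-|y)=u_p(x_p^-)(1+w_p^-(y)/p)$ with $u_p(x_p^-)<0$, that $u_p<0$ on the annulus $\{|x_p^-|/R\leq |x|\leq R|x_p^-|\}\subset\Omega$ for $p$ large. Since $\mathcal{N}_p^+$ is connected and contains the origin (Corollary \ref{regioneNodaleInterna}), this annulus of negativity forces $\mathcal{N}_p^+\subset B_{|x_p^-|/R}(0)\subset B_{R|x_p^-|}(0)$. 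I would then set $r_p:=R|x_p^-|$ and integrate $-\Delta u_p=|u_p|^{p-1}u_p$ over $B_{r_p}(0)$. Using that $\mathcal{N}_p^+\subset B_{r_p}(0)$, the divergence theorem (multiplied by $p$) yields
\[
-p\int_{\partial B_{r_p}}\partial_\nu u_p\,d\sigma \;=\; p\int_{\mathcal{N}_p^+}u_p^p\,dx\;-\;p\int_{B_{r_p}\cap\mathcal{N}_p^-}|u_p|^p\,dx.
\]

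\smallskip

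The three terms would then be estimated as $p\to\infty$ (with $R$ fixed). For the first right-hand term, rescaling around $x_p^+$ and using $v_p^+\to U$ (Theorem \ref{teo:BoundEnergia}(ii)) together with the identity $(\mu_p^+)^2|u_p(x_p^+)|^p=|u_p(x_p^+)|/p$, Fatou's lemma gives $\liminf_p p\int_{\mathcal{N}_p^+}u_p^p\,dx\geq 8\pi\liminf_p|u_p(x_p^+)|\geq 8\pi$. The second right-hand term is bounded using $|u_p|\leq|u_p(x_p^-)|$ on $\mathcal{N}_p^-$, giving $p\int_{B_{r_p}\cap\mathcal{N}_p^-}|u_p|^p\,dx\leq \pi R^2|u_p(x_p^-)|(|x_p^-|/\mu_p^-)^2\to 0$ under the contradiction assumption. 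The boundary flux is controlled by the chain rule identity $\partial_\nu u_p(x)=\frac{u_p(x_p^-)}{p|x_p^-|}\partial_r w_p^-(x/|x_p^-|)$: since $\partial_r w_p^-\to 0$ uniformly on $\{|y|=R\}$, one gets $|p\int_{\partial B_{r_p}}\partial_\nu u_p\,d\sigma|\leq 2\pi R|u_p(x_p^-)|\cdot o_p(1)\to 0$ for each fixed $R$.

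\smallskip

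Combining these three estimates in the identity above yields $o_p(1)\geq 8\pi - o_p(1)$ as $p\to\infty$, which is absurd. Hence $\ell\neq 0$. The main obstacle is the precise asymptotic control of the negative-part bulk integral and of the boundary flux, both of which rely crucially on the upgrade from $C^0$ to $C^2$ convergence $w_p^-\to 0$ on the annulus; without this uniform gradient decay, the boundary flux would remain of order $1$ and the argument would collapse. The localization of $\mathcal{N}_p^+$ inside $B_{r_p}$, in turn, uses essentially the assumption that $u_p$ has only two nodal regions (so $\mathcal{N}_p^+$ is connected and forced inside the inner disk by the annular barrier).
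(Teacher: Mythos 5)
Your proposal is correct and follows essentially the same route as the paper: assume $\ell=0$, invoke Lemma \ref{Lemma:scalingPreliminareOrigine} to get $w_p^-\to 0$ in $C^2_{loc}(\R^2\setminus\{0\})$, and apply the divergence theorem on a ball of radius comparable to $|x_p^-|$, showing the boundary flux and the negative bulk term are $o_p(1)$ while the positive bulk term stays bounded away from zero via the first bubble. The only (harmless) variation is that you capture the positive contribution by trapping all of $\mathcal{N}_p^+$ inside $B_{R|x_p^-|}(0)$ with an annular sign barrier, whereas the paper works on $B_{|x_p^-|}(0)$ and simply checks $B_{\mu_p^+}(x_p^+)\subset B_{|x_p^-|}(0)$.
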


\begin{proof}
By  Proposition \ref{prop:MinVaazero} we know that $\fr{|x^-_p|}{\mu^-_p}\to\ell\in[0,+\infty)$ as $p\rightarrow +\infty$. Let us suppose by contradiction that $\ell=0$.  Then  Lemma \ref{Lemma:scalingPreliminareOrigine} implies that \begin{equation}\label{wVaaZeroEqua}
w_p^-\rightarrow 0\ \mbox{ in }\ C^2_{loc}(\R^2\setminus\{0\}) \ \ \ \mbox{ as }\ p\rightarrow +\infty.
\end{equation}

\

By \eqref{problem1}, applying the divergence theorem in $B_{|x^-_p|}(0)$ we get
\bel\label{ABCassurdo}
p\int_{\partial B_{|x^-_p|}(0)}\nabla \upp(y)\cdot\fr{y}{|y|}\,d\sigma(y)=p\int_{B_{|x^-_p|}(0)\cap\mathcal{N}^-_p}|\upp(x)|^p\,dx-p\int_{B_{|x^-_p|}(0)\cap\mathcal{N}^+_p}|\upp(x)|^p\,dx.
\eel
Scaling $\upp$ with respect to $|x^-_p|$ as in Lemma \ref{Lemma:scalingPreliminareOrigine}, by \eqref{wVaaZeroEqua} we obtain
\begin{eqnarray}\label{primo}
&&\left|p\int_{\partial B_{|x^-_p|}(0)}\nabla \upp(y)\cdot\fr{y}{|y|}\,d\sigma(y)\right|=\left|p\int_{\partial B_{1}(0)}|x^-_p|\nabla \upp(|x^-_p|x)\cdot\fr{x}{|x|}\,d\sigma(x)\right|\nonumber\\
&&=\left|\int_{\partial B_{1}(0)}\upp(x^-_p)\,\nabla w^-_p(x)\cdot\fr{x}{|x|}\,d\sigma(x)\right|\,\leq\, |\upp(x^-_p)|\,2\pi \sup_{|x|=1}|\nabla w^-_p(x)|\,=\,o_p(1).
\end{eqnarray}

Now we want to estimate the right hand side in \eqref{ABCassurdo}. We first observe that scaling around $|x^-_p|$ with respect to $\mu^-_p$ we get
\begin{eqnarray}\label{secondo}
&&p\int_{B_{|x^-_p|}(0)\cap\mathcal{N}^-_p}|\upp(x)|^p\,dx=p\int_{B_{1}(0)\cap\fr{\mathcal{N}^-_p}{|x^-_p|}}|\upp(|x^-_p|y)|^p|x^-_p|^2\,dy\nonumber\\
&&\leq c_\infty \int_{B_1(0)\cap\fr{\mathcal{N}^-_p}{|x^-_p|}}\fr{|\upp(|x^-_p|y)|^{p-1}}{|\upp(x^-_p)|^{p-1}}\fr{|x^-_p|^2}{(\mu_p^-)^2}dy\,=\,o_p(1),
\end{eqnarray}
where in the last equality we have used that $\fr{|\upp(|x^-_p|y)|^{p-1}}{|\upp(x^-_p)|^{p-1}}\leq1$, since $|x^-_p|y\in \mathcal{N}^-_p$ and that by assumption $\fr{|x^-_p|}{\mu_p^-}\to0$ as $p\rightarrow +\infty$.

Next we claim that there exists $\bar p>1$ such that for any $p\geq \bar p$
\bel\label{inclusione}
B_{\mu^+_p}(x^+_p)\subset B_{|x^-_p|}(0).
\eel
Indeed, Corollary \ref{cor:nonvedoNL} implies that
\[+\infty = \lim_p \frac{d(x_p^+,NL_p)}{\mu_p^+}\leq \lim_p \frac{|x_p^+-x_p^-|}{\mu_p^+}\leq \lim_p\frac{|x_p^+|}{\mu_p^+}+\lim_p \frac{|x_p^-|}{\mu_p^+}=\lim_p\frac{|x_p^-|}{\mu_p^+},\]
where the last equality follows from Lemma \ref{lemma:MaxVaazeroVelocemente} (i.e. $\frac{|x_p^+|}{\mu_p^+}\rightarrow 0$). Hence for any $x\in B_1(0)$ we have
\[
\fr{|x^+_p+\mu^+_p x|}{|x^-_p|}\leq\fr{|x^+_p|}{|x^-_p|}+\fr{\mu^+_p}{|x^-_p|}\leq\fr{2\mu^+_p}{|x^-_p|}\to0\ \ \mbox{ as }\ p\to+\infty,
\]
and so \eqref{inclusione} is proved.\\

Hence by \eqref{inclusione} and scaling around $x^+_p$ with respect to $\mu^+_p$ we obtain
\bel\label{terzo}
p\int_{B_{|x^-_p|}(0)\cap\mathcal{N}^+_p}|\upp(x)|^p\,dx\geq p\int_{B_{\mu^+_p}(x^+_p)}|\upp(x)|^{p}dx=c_\infty\int_{B_1(0)}e^{U}dx+o_p(1).
\eel
Collecting \eqref{ABCassurdo}, \eqref{primo}, \eqref{secondo} and \eqref{terzo} we get clearly a contradiction.

\end{proof}

\

\

Next we show that the nodal line shrinks to the origin faster than $\mu_p^-$ as $p\rightarrow +\infty$.

\begin{proposition}
\label{prop:NodalLineShrinks}
We have
\[
\frac{\max\limits_{y_p\in NL_p}|y_p|}{\mu_p^-}\rightarrow 0 \ \ \mbox{ as }\ p\rightarrow +\infty.
\]
\end{proposition}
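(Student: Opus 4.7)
The plan is to argue by contradiction using Lemma~\ref{Lemma:scalingPreliminareOrigine} together with a simple topological/IVT reduction along the connected nodal curve. Suppose the conclusion fails; then up to a subsequence there exist $y_p\in NL_p$ with $|y_p|/\mu_p^-\to L\in(0,+\infty]$. Since $\ell>0$ by Proposition~\ref{prop:NLp+l>0}, Lemma~\ref{Lemma:scalingPreliminareOrigine} gives that the rescaled function
\[
w_p^-(x)=\frac{p}{u_p(x_p^-)}\bigl(u_p(|x_p^-|x)-u_p(x_p^-)\bigr)
\]
converges in $C^2_{loc}(\mathbb R^2\setminus\{0\})$ to a function $\gamma$ which is $C^2$ (hence finite) on $\mathbb R^2\setminus\{0\}$. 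The key elementary remark is that, since $u_p(y)=0$ on $NL_p$, we have $w_p^-(y/|x_p^-|)=-p$ for every $y\in NL_p$.

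\textbf{Case A: $L<+\infty$.} Since $|y_p/|x_p^-||=(|y_p|/\mu_p^-)(\mu_p^-/|x_p^-|)\to L/\ell>0$, after extracting a further subsequence making the direction $y_p/|y_p|$ converge, we obtain $y_p/|x_p^-|\to y_\infty$ with $|y_\infty|=L/\ell>0$. Then the $C^2_{loc}(\mathbb R^2\setminus\{0\})$-convergence yields $w_p^-(y_p/|x_p^-|)\to\gamma(y_\infty)\in\mathbb R$, contradicting $w_p^-(y_p/|x_p^-|)=-p\to-\infty$.

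\textbf{Case B: $L=+\infty$.} The strategy is to reduce this case to Case A by extracting a nodal point at an intermediate, bounded rescaled distance from $O$. Since by hypothesis $u_p$ has exactly two nodal regions and $NL_p\cap\partial\Omega=\emptyset$, the nodal line $NL_p$ is a connected closed curve inside $\Omega$. Because $O\in\mathcal N_p^+$ (Corollary~\ref{regioneNodaleInterna}) and $x_p^-\in\mathcal N_p^-$, the segment $[O,x_p^-]$ must meet $NL_p$ at some point $\tilde z_p$ with $|\tilde z_p|\le|x_p^-|$, hence $|\tilde z_p|/\mu_p^-\le\ell+o(1)$. Since $y\mapsto|y|$ is continuous on the connected set $NL_p$, its image is the interval $\bigl[\min_{NL_p}|y|,\max_{NL_p}|y|\bigr]$, which for $p$ large contains the value $(\ell+1)\mu_p^-$ (as $\min_{NL_p}|y|/\mu_p^-\le\ell+o(1)$ and $\max_{NL_p}|y|/\mu_p^-\to+\infty$). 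Intermediate value theorem therefore produces $y_p^{\#}\in NL_p$ with $|y_p^{\#}|/\mu_p^-=\ell+1$, to which Case~A applies (with $L^{\#}=\ell+1$), giving the same contradiction.

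The main obstacle is Case~B: the rescaling $w_p^-$ at scale $|x_p^-|$ has no direct control on nodal points whose rescaled distance diverges, and alternative scalings (by $|y_p|$) lead to equations with blowing-up coefficients that are hard to analyse. The idea of using the topological constraint (exactly two nodal regions $\Rightarrow$ connected nodal curve) to pull a nodal point back to a prescribed bounded scale is what makes the argument go through without further analysis of the singular limit.
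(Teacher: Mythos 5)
Your proof is correct and takes essentially the same route as the paper: the paper likewise plays the finiteness of the limit $\gamma$ of $w_p^-$ on $\R^2\setminus\{0\}$ against $w_p^-=-p$ on the rescaled nodal line to rule out nodal points at intermediate rescaled distances (your Case A), and then uses $O\in\mathcal N_p^+$ to produce a nodal point on the segment $[O,x_p^-]$ together with an intermediate-value argument along $NL_p$ to pull a nodal point back to a prescribed bounded scale (your Case B). The only cosmetic difference is that the paper states this as a dichotomy ($|y_p|/|x_p^-|\to 0$ or $\to+\infty$) followed by exclusion of the second alternative, and you make the connectedness/IVT step on $NL_p$ more explicit than the paper does.
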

\begin{proof} By Proposition \ref{prop:NLp+l>0} it is enough to prove that
\[
\frac{\max\limits_{y_p\in NL_p}|y_p|}{|x_p^-|}\rightarrow 0 \ \ \mbox{ as }\ p\rightarrow +\infty.
\]
First we show that, for any $y_p\in NL_p$, the following alternative holds:
\begin{equation}\label{AlternativaEq}
\mbox{either }\ \ \frac{|y_p|}{|x_p^-|}\rightarrow 0\ \ \mbox{ or }\ \ \ \frac{|y_p|}{|x_p^-|}\rightarrow +\infty\ \ \mbox{ as }p\rightarrow +\infty.
\end{equation}

\

Indeed assume by contradiction that $\frac{|y_p|}{|x_p^-|}\rightarrow m\in (0,+\infty)$ as $p\rightarrow +\infty$.  Then $w_p^-(\frac{y_p}{|x_p^-|} )=-p\rightarrow -\infty$ as $p\rightarrow +\infty$. But we have proved in Lemma \ref{Lemma:scalingPreliminareOrigine}  that $w_p^-(\frac{y_p}{|x_p^-|} )\rightarrow\gamma(y_{m})\in\R$, where $y_{m}$ is such that $|y_{m}|=m>0$, and this gives a contradiction.

To conclude the proof we have then to exclude the second alternative in \eqref{AlternativaEq}.
For $y_p\in NL_p$, let us assume by contradiction that $\frac{|y_p|}{|x_p^-|}\rightarrow +\infty$ as $p\rightarrow +\infty$ and let us observe that
 \begin{equation}
 \label{unPuntoSullaLineaVaAZero}
 \exists\  z_p\in NL_p
 \ \mbox{  such that }\ \
\frac{|z_p|}{|x_p^-|}\rightarrow 0\ \mbox{ as }\ p\rightarrow +\infty.
\end{equation}
Indeed in the previous section we have shown that $O\in \mathcal N_p^+$,  hence there exists $t_p\in (0,1)$ such that $z_p:=t_p x_p^- \in NL_p$. Since $\frac{|z_p|}{|x_p^-|}<1$, by \eqref{AlternativaEq} we get \eqref{unPuntoSullaLineaVaAZero}.\\

Then for any $M>0$ there exists $\alpha_p^M\in NL_p$ such that $\frac{|\alpha_p^M|}{|x_p^-|}\rightarrow M$ as $p\rightarrow +\infty$ and this is in contradiction with \eqref{AlternativaEq}.
\end{proof}

\

\

Finally we can analyze the local behavior of $u_p$ around the minimum point $x_p^-$. Note that by Lemma \ref{lemma:MaxVaazeroVelocemente} and Proposition \ref{prop:NLp+l>0} we can already claim that the rescaling $v_p^-$ about $x_p^-$ (see \eqref{riscalataAboutMin}) cannot converge to  $U$ in $\R^2\setminus\{0\}$, where $U$ is the function in \eqref{v0}, indeed we have the following
\begin{proposition} Passing to a subsequence 
\label{prop:scalingNegativo}
\begin{equation}
v_p^-(x)\longrightarrow V(x-x_{\infty})\ \ \mbox{ in }C^2_{loc}(\mathbb R^2\setminus \{x_{\infty}\}),\mbox{ as }p\rightarrow +\infty
\end{equation}
where $V$ is the radial singular function in \eqref{espressioneEsplicitaV} which satisfies the  Liouville equation \eqref{LiouvilleSingularEquation} and $x_{\infty}\neq 0$ is like in 
%
%
\end{proposition}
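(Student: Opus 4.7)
The plan is to perform a controlled rescaling about $x_p^-$, combining the geometric information of Propositions \ref{prop:NLp+l>0} and \ref{prop:NodalLineShrinks} with the general tools from Section \ref{SectionGeneralAnalysis}. Up to extracting a subsequence, set $x_\infty:=\lim_p(-x_p^-/\mu_p^-)$, so that $|x_\infty|=\ell>0$. Under the rescaling $x\mapsto x_p^-+\mu_p^- x$, the origin of $\Omega$ corresponds to the point $-x_p^-/\mu_p^-\to x_\infty$, and Proposition \ref{prop:NodalLineShrinks} implies that $(NL_p-x_p^-)/\mu_p^-$ is eventually contained in every prescribed neighborhood of $x_\infty$. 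Hence for any compact $K\subset\R^2\setminus\{x_\infty\}$, the set $K$ lies, for $p$ large, in the same connected component as $0$ of $\widetilde\Omega_p^-$ minus the rescaled nodal line; since $0\in\widetilde{\mathcal N}_p^-$, this component is precisely $\widetilde{\mathcal N}_p^-=(\mathcal N_p^--x_p^-)/\mu_p^-$. There $u_p$ is negative and $|u_p|\leq|u_p(x_p^-)|$, so $v_p^-\leq 0$ and $|1+v_p^-/p|\leq 1$; in particular $|\Delta v_p^-|\leq 1$ uniformly on any such $K$.

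The next step is to pass to the limit. As in the proof of Theorem \ref{teo:BoundEnergia}-\emph{(ii)}, on each compact $K\subset\R^2\setminus\{x_\infty\}$ we decompose $v_p^-=\varphi_p+\psi_p$ with $-\Delta\varphi_p=-\Delta v_p^-$ and $\psi_p$ harmonic: by elliptic theory $\varphi_p$ is uniformly bounded, while $\psi_p$ is harmonic and bounded above; since $\psi_p(0)=-\varphi_p(0)$ is bounded, Harnack's inequality yields local uniform bounds on $\psi_p$. Therefore $v_p^-$ is locally uniformly bounded on $\R^2\setminus\{x_\infty\}$, and by elliptic regularity, along a subsequence, $v_p^-\to W$ in $C^2_{loc}(\R^2\setminus\{x_\infty\})$ with $W\leq 0$, $W(0)=0$ and $-\Delta W=e^W$ in $\R^2\setminus\{x_\infty\}$. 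Fatou's lemma combined with \eqref{assumptionEnergyINIZIO} gives $\int_{\R^2\setminus\{x_\infty\}} e^W<+\infty$.

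The third step is to identify $V(y):=W(y+x_\infty)$ with the explicit singular solution \eqref{espressioneEsplicitaV}. Radial symmetry of $V$ comes from the $G$-symmetry of $u_p$: for every $g\in G$, the point $gx_p^-$ is also a minimum of $u_p$ with the same $\mu_p^-$, and rescaling around $gx_p^-$ produces the same $v_p^-$ composed with a rotation that, in rescaled coordinates, fixes the point $x_\infty$ (because rotations about the origin of $\Omega$ become rotations about $x_\infty$ in the rescaled frame). By uniqueness of the $C^2_{loc}$ limit, $W$ is $G$-invariant about $x_\infty$, i.e.\ $V$ is $G$-invariant about $0$. To detect the Dirac mass at $x_\infty$, one tests $-\Delta v_p^-=|1+v_p^-/p|^{p-1}(1+v_p^-/p)$ against $\phi\in C^\infty_c(\R^2)$: on $\widetilde{\mathcal N}_p^-$ the right-hand side converges to $e^W$ in $L^1_{loc}$, while on the vanishing region $\widetilde{\mathcal N}_p^+$ around $x_\infty$, rescaling via $\mu_p^+$ and invoking Theorem \ref{teo:BoundEnergia}-\emph{(ii)} gives
\[
\int_{\widetilde{\mathcal N}_p^+}(-\Delta v_p^-)\,dx\;\longrightarrow\;-\frac{\lim_p\|u_p^+\|_\infty}{\lim_p\|u_p^-\|_\infty}\,8\pi =: H <0,
\]
concentrated at $x_\infty$. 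Consequently $V$ is a radial solution of $-\Delta V=e^V+H\delta_0$ on $\R^2$ with $\int e^V<+\infty$. The classification of radial singular Liouville solutions (Chen--Li, Prajapat--Tarantello) then yields the two-parameter family \eqref{espressioneEsplicitaV}; the parameters $\alpha,\beta$ are pinned down by the normalizations $V(-x_\infty)=0$ (since $v_p^-(0)=0$) and the fact that $-x_\infty$ is a maximum point of $V$ (since $0$ is the maximum of the non-positive $v_p^-$), which together give exactly $\alpha=\sqrt{2|x_\infty|^2+4}$ and $\beta=|x_\infty|((\alpha+2)/(\alpha-2))^{1/\alpha}$.

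The main technical obstacle is the third step: precisely because the natural scales $\mu_p^\pm$ differ (by \eqref{rapportoMu}, $\mu_p^+/\mu_p^-\to 0$), the behavior of $v_p^-$ on $\widetilde{\mathcal N}_p^+$ is not a perturbation of the regular Liouville bubble but must be captured by carefully relating the two rescalings — producing a Dirac mass with the correct sign and strength — while simultaneously ruling out additional concentration outside $\{x_\infty\}$ (which is where $(\mathcal P_3^1)$ of Proposition \ref{prop:bark=1} is essential).
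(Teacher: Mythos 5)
Your overall strategy is close to the paper's: use Proposition \ref{prop:NodalLineShrinks} to confine the rescaled nodal line (and the whole rescaled positive nodal domain) near the limit of $-x_p^-/\mu_p^-$, obtain the uniform bound $|\Delta v_p^-|\leq 1$ on compact sets away from that point, pass to a limit solving $-\Delta W=e^W$ off one point with $W\leq0$, $W(0)=0$, and then classify. (The paper performs the same analysis on the translate $s_p^-(x)=v_p^-(x-x_p^-/\mu_p^-)$, rescaling about the origin of $\Omega$, which is only a cosmetic difference; it also uses the bound $|{-\Delta s_p^-}|\leq c_\infty C^2R^2$ on $B_R\setminus B_{1/R}$ coming from Proposition \ref{prop:MinVaazero} rather than confinement of $\widetilde{\mathcal N}_p^+$, but these are interchangeable.) However, your identification step has a genuine gap. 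From the $G$-invariance of $u_p$ you correctly deduce that $V$ is invariant under the \emph{discrete} group of rotations $G$ about the singular point, but a $G$-invariant function is not radial, so you cannot invoke a ``classification of radial singular Liouville solutions'': radiality is precisely what must be proved. The paper's route is different: it first shows the singularity is genuinely present by contradiction (if $V$ extended to a classical entire solution it would equal $U(\cdot-x_\ell)$ by Chen--Li, and then Lemma \ref{lemma:MaxVaazeroVelocemente} would force $|x_p^-|/\mu_p^-\to0$, contradicting Proposition \ref{prop:NLp+l>0}); it then applies Chen--Li/Chou--Wan to get $-\Delta V=e^V-4\pi\eta\delta_0$, $\int e^V=8\pi(1+\eta)$, and the full Prajapat--Tarantello classification, which says $V$ is either radial or $(\eta+1)$-symmetric with $\eta\in\N$, and finally excludes the non-radial alternative using the energy bound \eqref{assumptionEnergyINIZIO}. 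Some such argument is needed in place of your symmetry claim.

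A second, smaller issue: your computation of the Dirac mass,
\[
\int_{\widetilde{\mathcal N}_p^+}(-\Delta v_p^-)\,dx=\frac{p}{u_p(x_p^-)}\int_{\mathcal N_p^+}u_p^{\,p}\,dy\ \longrightarrow\ -\frac{8\pi\,m^+}{m^-},
\]
is formally correct but presupposes the quantization $p\int_{\mathcal N_p^+}u_p^{\,p}\to 8\pi\lim_p\|u_p^+\|_\infty$. At this stage only the two-sided bound \eqref{boundEnergiap} and the Fatou lower bound are available; the sharp limit (no residual mass outside the bubble) is a nontrivial additional fact, proved for positive solutions only in \cite{DIPpositive}. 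The paper deliberately avoids computing $H$ this way: the value of $\eta$ (hence $H$) comes out a posteriori from the explicit radial solution determined by $V(\ell)=V'(\ell)=0$, which matches your normalization $V(-x_\infty)=0$ with $-x_\infty$ a maximum. Your reduction of the ODE data to the explicit $\alpha,\beta$ is fine once radiality and the equation with the point mass are in hand.
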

\begin{proof}
Let us consider the translations of $v_p^-$:
\[
s_p^-(x):=v_p^-\left(x-\frac{x_p^-}{\mu_p^-} \right)=\fr{p}{\upp(x_{p}^-)}(\upp(\mu_p^- x)-\upp(x_p^-)),\quad \quad x\in \frac{\Omega}{\mu_p^-}
\]
which solve
\[
-\Delta s_p^-(x)=\left|1+\frac{s_p^-(x)}{p}    \right|^{p-1}\left(1+\frac{s_p^-(x)}{p}    \right),
\qquad\quad s^-_p(\fr{x^-_p}{\mu^-_p})=0,\qquad\quad s^-_p\leq 0.\]
Observe that $\frac{\Omega}{\mu_p^-}\rightarrow \mathbb R^2$ as $p\to+\infty$.\\ We claim that for any fixed $r>0$, $|-\Delta s_p^-|$ is bounded in $\frac{\Omega}{\mu_p^-}\setminus B_r(0)$. \\
Indeed Proposition \ref{prop:NodalLineShrinks} implies that if $x\in \frac{\mathcal{N}^+_p}{\mu_p^-}$,  then $|x|\leq\fr{\max\limits_{z_p\in NL_p}|z_p|}{\mu^-_p}<r,$ for $p$ large, hence
\[
\left( \frac{\Omega}{\mu_p^-}\setminus B_r(0) \right)
\subset\frac{\mathcal{N}^-_p}{\mu_p^-}\ \ \ \mbox{ for } p \mbox{ large}
\]
and so the claim follows observing that for $x\in \frac{\mathcal{N}^-_p}{\mu_p^-}$, then $|-\Delta s_p^-(x)|\leq1$.\\
Hence, by the arbitrariness of $r>0$, $s_p^-\rightarrow V$ in $C^2_{loc}(\mathbb R^2\setminus \{0\})$ as $p\rightarrow +\infty$ where $V$ is a solution of
$$
-\Delta V=e^{V}\ \  \mbox{ in }\ \R^2\setminus\{0\}
$$
which satisfies $V\leq 0$ and $V(x_{\ell})=0$ where $x_{\ell}:=\lim_p\fr{x^-_p}{\mu^-_p}$ and $|x_{\ell}|=\ell$ by Proposition \ref{prop:NLp+l>0}.
Moreover by virtue of Theorem \ref{teo:BoundEnergia}-\emph{(i)} and by \eqref{assumptionEnergyINIZIO} it can be seen that $e^{V}\in L^1(\R^2)$.\\
Observe that if $V$ was a classical solution of  $-\Delta V=e^V$ in the whole  $\R^2$ then necessarily $V(x)=U(x-x_{\ell})$. As a consequence $v_p^-(x)=s_p^-(x+\frac{x_p^-}{\mu_p^-})\rightarrow V(x+x_{\ell})=U(x)$ in $C^2_{loc}(\R^2\setminus\{-x_{\ell}\})$ as $p\rightarrow +\infty$. But then  Lemma \ref{lemma:MaxVaazeroVelocemente} would imply that $|x_{\ell}|=\frac{|x_p^-|}{\mu_p^-}\rightarrow 0$ as $p\rightarrow +\infty$, which is in contradiction with Proposition \ref{prop:NLp+l>0}. Thus, by \cite{ChenLi2,ChouWan1,ChouWan2} and the classification in \cite{ChenLi} we have that $V$ solves, for some $\eta >0$, the following entire equation
\[
\left\{
  \begin{array}{ll}
    -\lap V=e^{V}-4\pi\eta\delta_0 & \hbox{\textrm{in $\R^2$}} \\
    \int_{\R^2}e^{V}dx=8\pi(1+\eta), & \hbox{\:}
  \end{array}
\right.
\]
where $\delta_0$ denotes the Dirac measure centered at the origin.\\
By the classification given in \cite{PrajapatTarantello}, we have that either $V$ is radial, or $\eta\in\N$ and $V$ is $(\eta+1)$-symmetric. Actually it turns out that the energy bound \eqref{assumptionEnergyINIZIO} forces $V$ to be a radial solution, $V(r)$, satisfying
\[
\left\{
\begin{array}{lr}-V''-\frac{1}{r}V'=e^{V}\  \mbox{ in } (0, +\infty)\\
V\leq 0\\
V(\ell)=V'(\ell)=0
\end{array}
\right..
\]
The solution of this problem is
\[
V(r)=\log\left(\frac{2\alpha^2\beta^{\alpha}r^{\alpha -2}}{(\beta^{\alpha}+r^{\alpha})^2} \right),
\]
where $\alpha=\sqrt{2\ell^2+4}$ and $\beta=\ell \left(\frac{\alpha+2}{\alpha-2} \right)^{1/\alpha}$.
The conclusion follows observing that $v_p^-(x)=s_p^-\left( x+\frac{x_p^-}{\mu_p^-}\right)$ and setting $x_{\infty}=-x_{\ell}$.
\end{proof}

\

\subsection{Conclusion of the proof of Theorem \ref{TeoremaPrincipaleCasoSimmetrico}}

It follows combining all the previous results. More precisely \emph{$i)$} and \emph{$ii)$} follow from Proposition \ref{MaxVaazero} and Proposition \ref{prop:bark=1}. \emph{$iii)$}  is due to Proposition \ref{prop:MinVaazero}. \emph{$iv)$} is in Proposition \ref{prop:NodalLineShrinks}. Finally \emph{$v)$} comes from Proposition \ref{prop:scalingNegativo}.

\end{document}